\numberwithin{equation}{section}
\newtheorem{theorem}{Theorem}[section]
\newtheorem{lemma}[theorem]{Lemma}
\newtheorem{definition}[theorem]{Definition}
\newtheorem{proposition}[theorem]{Proposition}
\newtheorem{corollary}[theorem]{Corollary}
\begin{document}

\title[Prescribing curvatures on surfaces with singularities]{Prescribing curvatures on surfaces with conical singularities and corners}

\author{Luca Battaglia}
\address{Luca Battaglia\\
Universit\`a degli Studi Roma Tre\\
Dipartimento di Matematica e Fisica\\
Largo S. Leonardo Murialdo 1\\
00146 Roma, Italy}
\email{luca.battaglia@uniroma3.it}

\author{Francisco J. Reyes-S\'anchez}
\address{Francisco J. Reyes-S\'anchez\\
Universidad de Granada\\
Departamento de An\'alisis Matem\'atico\\
Campus Fuentenueva\\
18071 Granada, Spain}
\email{fjreyes@ugr.es}

\thanks{L.B. has been partially supported by MUR-PRIN-2022AKNSE4 and INDAM-GNAMPA project ``Problemi di doppia curvatura su variet\`a a bordo e legami con le EDP ti tipo ellittico''. F.J.R.S. has been supported by a PhD fellowship (PRE2021-099898) linked to the \emph{IMAG-Maria de Maeztu} Excellence Grant CEX2020-001105-M funded by MICIN/AEI.}

\keywords{Prescribed curvature problem, conformal metric, blow-up analysis, variational methods.}

\subjclass[2000]{35J20, 58J32, 53A30, 35B44}

\begin{abstract}
This paper is concerned with the problem of prescribing Gaussian curvature $K$ and geodesic curvature $h$ in a compact surface with boundary $\Sigma$ with conical singularities $\{p_1,\dots,p_n\}$ and corners $\{q_1,\dots,q_n\}$. This is equivalent to solving the Liouville-type equation:
$$\left\{\begin{array}{ll}-\Delta u+2K_0=2Ke^u-4\pi\sum_{i=1}^n\alpha_i\left(\delta_{p_i}-\frac1{|\Sigma|}\right)-2\pi\sum_{j=1}^m\beta_j\left(\delta_{q_j}-\frac1{|\Sigma|}\right)&\mbox{in }\Sigma\\\partial_\nu u+2h_0=2he^{u/2}&\mbox{on }\partial\Sigma\end{array}\right.,$$
where $K_0,h_0$ are the pre-existing Gaussian curvature and the geodesic curvature, respectively, and $\alpha_i,\beta_j>-1$ are given.\\
Solutions are obtained using a new variational formulation, first introduced in \cite{bls2} for the regular counterpart of the problem and extended here to the singular case.\\
As far as we know, this is the first result for the problem of prescribed curvatures in surfaces with the two types of singularities. Key ingredients are a blow-up analysis around a sequence of points different from local maxima and Morse index estimates.

\end{abstract}

\maketitle

\section{Introduction}

Let $(\Sigma,g_0)$ be a closed Riemannian surface. A classical problem in geometric analysis is given by the \emph{prescribed Gaussian curvature problem}, that is: given a smooth function $K:\Sigma\to\mathbb R$, find another metric $g=e^ug_0$ being conformally equivalent to $g_0$ with Gaussian curvature equal to $K(x)$ at each point $x\in\Sigma$. Such a problem is equivalent to solving the following Liouville-type equation
\begin{equation}\label{gauss}
-\Delta u+2K_0=2Ke^u\qquad\mbox{in }\Sigma,
\end{equation}
where $\Delta$ is the Laplace-Beltrami operator associated to the metric $g_0$ and $K_0$ is the Gaussian curvature with respect to the metric $g_0$. The problem \eqref{gauss} has been widely studied for decades; we refer to Chapter 6 in \cite{a} (and its references) for a survey.\\
A natural extension of problem \eqref{gauss} to compact surfaces with boundary is the \emph{double curvature prescription problem}: given smooth functions $K:\Sigma\to\mathbb R$ and $h:\partial\Sigma\to\mathbb R$, we want to find a conformal metric $g=e^ug_0$ such that the Gaussian curvature with respect to $g$ equals $K$ pointwise on $\Sigma$ and its geodesic curvature coincides with $h$ pointwise on $\partial\Sigma$. The equivalent formulation in terms of partial differential equations is given by the following nonlinear Neumann problem on $\Sigma$:
\begin{equation}\label{eqreg}
\left\{\begin{array}{ll}-\Delta u+2K_0=2Ke^u&\mbox{in }\Sigma\\\partial_\nu u+2h_0=2he^{u/2}&\mbox{on }\partial\Sigma\end{array}\right.,
\end{equation}
with $\partial_\nu$ being the outer normal derivative and $h_0$ being the geodesic curvature with respect to $g_0$.\\
Despite its relevance, problem \eqref{eqreg} has been significantly less explored compared to \eqref{gauss}. Some results are available for particular cases: specifically, the case $h = 0$ has been treated in \cite{cy}, while the case $K = 0$ has been studied in \cite{cl,ll,lh}. Blow-up behavior of solutions has also been studied, see \cite{bwz,dmr}. When $K$ and $h$ are constants, explicit solutions have been classified when the domain is a disc or an annulus; see \cite{hw,j}. In addition, classification results in the half-plane setting can be found in \cite{gm,lz,z}.\\
However, not much is known about the general case where both curvatures are non-constant functions. A variational approach has been explored in \cite{cr,lsmr,bls2}, while some asymptotic properties have been examined in \cite{jlsmr,bmp1,bmp2}. Further progress in this direction remains an open challenge.\\

This paper is concerned with generalizing the problem \eqref{eqreg} to a metric $g$ with singularities such as finitely many given \emph{conical points} in $\Sigma$ or \emph{corners} in $\partial\Sigma$. We say that $p$ is a conical singularity of order $\alpha>-1$, or angle $2\pi(1+\alpha)$, if there is a local coordinate system $\Xi:B_\delta(p)\to\mathbb R^2$ with $\Xi(p)=0$ and $g$ has the form
\begin{equation}\label{conical1}
g(x)=|\Xi(x)|^{2\alpha}e^{v(x)}g_e(x),
\end{equation}
where $v$ is a smooth function and $g_e$ denotes the standard Euclidean metric. This implies that $(\Sigma,g)$ is locally isometric around $p$ to an Euclidean cone with angle $2\pi(1+\alpha)$.\\
Similarly, $q\in\partial\Sigma$ is a \emph{corner} of order $\beta>-1$, or angle $\pi(1+\beta)$, if in a suitable local coordinate system $\Xi$ around $q$ the metric $g$ has the form
\begin{equation}\label{conical2}
g(x)=|\Xi(x)|^{2\beta}e^{v(x)}g_e(x);
\end{equation}
in this case, $(\Sigma,g)$ is locally isometric to the planar sector
$$\{(\rho\cos\theta,\rho\sin\theta):\,\rho\ge0,\,0\le\theta\le\pi(1+\beta)\}.$$
Fixing some conical singularities $p_1,\dots,p_n\in\Sigma$ of orders $\alpha_1,\dots,\alpha_n$, respectively, and some corners $q_1,\dots,q_m\in\partial\Sigma$ of orders $\beta_1,\dots,\beta_m$, respectively, the equation that must be satisfied by the logarithm of the conformal factor is now
$$\left\{\begin{array}{ll}-\Delta u+2K_0=2Ke^u-4\pi\sum_{i=1}^n\alpha_i\left(\delta_{p_i}-\frac1{|\Sigma|}\right)-2\pi\sum_{j=1}^m\beta_j\left(\delta_{q_j}-\frac1{|\Sigma|}\right)&\mbox{in }\Sigma\\\partial_\nu u+2h_0=2he^{u/2}&\mbox{on }\partial\Sigma\end{array}\right.,$$
where $\delta_p$ denotes the Dirac delta function with pole at $p$, and $|\Sigma|$ is the area of $\Sigma$.\\
The problem may be de-singularized by the substitution
$$u:=v+4\pi\left(\sum_{j=1}^n\alpha_iG_{p_i}+\frac12\sum_{i=1}^m\beta_jG_{q_j}\right),$$
where $G_p$ is the Green's function of $-\Delta$ with pole at $p$, i.e. the solution to
$$\left\{\begin{array}{ll}
-\Delta G_p=\delta_p-\frac1{|\Sigma|}&\mbox{in }\Sigma\\
\partial_\nu G_p=0&\mbox{on }\partial\Sigma\\
\int_\Sigma G_p=0
\end{array}\right.,$$
This leads to the following problem:
$$\left\{\begin{array}{ll}
-\Delta u+2K_0=2\widetilde Ke^u&\mbox{in }\Sigma\\\partial_\nu u+2h_0=2\widetilde he^{u/2}&\mbox{on }\partial\Sigma\end{array}\right.,$$
where
\begin{equation}\label{kh}
\widetilde K=Ke^{-4\pi\left(\sum_{i=1}^n\alpha_iG_{p_i}+\frac12\sum_{j=1}^m\beta_jG_{q_j}\right)},\,\,\,\,\widetilde h=he^{-2\pi\left(\sum_{i=1}^n\alpha_iG_{p_i}+\frac12\sum_{j=1}^m\beta_jG_{q_j}\right)};
\end{equation}
we point out that, in view of the logarithmic singularity of $G_p$, the local behavior of $\widetilde K,\widetilde h$ at each singular point is given by
\begin{eqnarray*}
\widetilde K=d(\cdot,p_i)^{2\alpha_i}\widehat K&\mbox{around}&p_i\\
\widetilde K=d(\cdot,q_j)^{2\beta_j}\widehat K&\mbox{around}&q_j\\
\widetilde h=d(\cdot,q_j)^{\beta_j}\widehat h&\mbox{around}&q_j,
\end{eqnarray*}
with $\widehat K,\widehat h$ smooth and bounded from below by positive constants, consistently with \eqref{conical1}, \eqref{conical2}.\\

By integrating both sides of the equation and applying the Gauss-Bonnet theorem, we obtain
\begin{equation}\label{gaussbonnet}
\int_\Sigma\widetilde Ke^u+\int_{\partial\Sigma}\widetilde he^{u/2}=2\pi\chi,
\end{equation}
where $\chi$ represents the singular Euler characteristic of $\left(\Sigma,\underline\alpha,\underline\beta\right)$, first introduced by Troyanov in \cite{tr}, defined by
\begin{equation}\label{chi}
\chi:=\chi\left(\Sigma,\underline\alpha,\underline\beta\right)=\chi(\Sigma)+\sum_{i=1}^n\alpha_i+\frac12\sum_{j=1}^m\beta_j,
\end{equation}
and $\chi(\Sigma)$ is the classical Euler characteristic of $\Sigma$.\\
As a preliminary, let us remark that, without singularities, we can always prescribe zero geodesic curvature and constant Gaussian curvature (see for instance Proposition 3.1 in \cite{lsmr}). Due to this, we may assume that $h_0=0$ and $K_0=\frac{2\pi\chi}{|\Sigma|}$, where $|\Sigma|$ is the measure of the surface area of $\Sigma$, so that the problem simplifies to:
\begin{equation}\label{eqsing}
\left\{\begin{array}{ll}
-\Delta u+\frac{4\pi\chi}{|\Sigma|}=2\widetilde Ke^u&\mbox{in }\Sigma\\
\partial_\nu u=2\widetilde he^{u/2}&\mbox{on }\partial\Sigma
\end{array}\right..
\end{equation}
As we will see in Section \ref{meanfield}, by re-writing this problem as a \emph{mean-field equation}, solutions to \eqref{eqsing} can be interpreted as critical points on the space
$$\overline H^1(\Sigma):=\left\{u\in H^1(\Sigma):\int_\Sigma u=0\right\}$$
of the energy functional:
\begin{equation}\label{j}
\mathcal J(u)=\frac12\int_\Sigma|\nabla u|^2-\mathcal F_\chi\left(\int_\Sigma\widetilde Ke^u,\int_{\partial\Sigma}\widetilde he^{u/2}\right),
\end{equation}
for some smooth $\mathcal F_\chi(A,B)$.\\
This new approach is inspired by the work of \cite{bls2} on prescribing Gaussian and geodesic curvature on smooth surfaces with boundary, without singularities. It extends the well-known mean field model for the prescribed Gaussian curvature problem on closed surfaces, a model that has been widely applied in this context, as seen in the seminal work \cite{dm}.\\
The introduction of this formulation, both for problem \eqref{eqsing} and its counterpart on closed surfaces, is motivated by to its better adaptability to variational methods. Furthermore, it is invariant under the addition of constants. In fact, the mean field formulation allows us to find solutions as either absolute minima or min-max critical points of the energy functional through the use of Trudinger-Moser inequalities. This aspect will be explored in greater depth in Sections \ref{chipos} and \ref{minmaxsol}.\\

In the study of the existence of solutions to the problem \eqref{eqsing}, a key role is played by the \emph{Trudinger constant} of the singular surface, denoted by $\tau$. This constant is defined as
\begin{equation}\label{tau}
\tau:=\tau\left(\underline\alpha,\underline\beta\right)=\min\left\{1, 2+2\min_{i=1,\dots,n}\alpha_i,1+\min_{j=1,\dots,m}\beta_j\right\},
\end{equation}
and together with the \emph{singular Euler characteristic} $\chi$, heavily influences the study of this problem. This is basically due to Trudinger-Moser inequalities (see Proposition \ref{tm}) which allow us to establish whether the energy functional $\mathcal J$ is bounded from below and coercive, depending on which is larger between $\tau$ and $\chi$, thus yielding minimizing solutions. Following \cite{tr}, we have the following definition:
\begin{definition}
The singular surface $\left(\Sigma,\underline\alpha,\underline\beta\right)$ is classified as:
\begin{itemize}
\item\emph{subcritical} if $0<\chi<\tau$;
\item\emph{critical} if $\chi=\tau$;
\item\emph{supercritical} if $\chi>\tau$.
\end{itemize}
\end{definition}
In the \emph{subcritical} case, Trudinger-Moser inequalities can be employed to show the existence of solutions, as the energy functional \eqref{j} is coercive. In the \emph{critical} and \emph{supercritical} cases, coercivity does not hold anymore in $\overline H^1(\Sigma)$, but we can recovery coercivity under additional assumptions of symmetry on the curvatures $K$ and $h$. Specifically, we assume that $K$ and $h$ are invariant under the action of an isometry group $\mathcal G$ of $\overline\Sigma$ with no boundary fixed points, namely:
\begin{equation}\tag{$\mathcal G$}\label{sym}
\begin{array}{c}
\mathrm{Fix}(\mathcal G)\cap\partial\Sigma=\emptyset;\\
\nonumber K(x)=K(gx)\qquad\mbox{for a.e. }x\in\Sigma,\,\forall g\in\mathcal G;\\
\nonumber h(x)=h(gx)\qquad\mbox{for a.e. }x\in\Sigma,\,\forall g\in\mathcal G.
\end{array}
\end{equation}
Symmetry allows us to search for solutions in the subspace of $\overline H^1(\Sigma)$ consisting on $\mathcal G$-symmetric functions:
$$\widetilde H_{\mathcal G}:=\left\{u\in\overline H^1(\Sigma):u(gx)=u(x)\mbox{ for a.e. }x\in\Sigma,\,\forall g\in\mathcal G\right\}.$$
On such a space, one can obtain improved Trudinger-Moser inequalities, which allow us to recover coercivity for $\mathcal J$ (see Proposition \ref{tmimpr}). The role of symmetry was first highlighted in \cite{mo} and has been further explored in recent works on curvature prescription problems, such as \cite{cr,lsrsr}.\\
However, due to the presence of singularities, we must discuss whether $\mathcal G$ has fixed points and, if it does, whether they are conical singularities. We emphasize that this is a new aspect compared to the problem without singularities, originally considered in \cite{bls2}, because in the regular case the only situation where $\chi>0$ is critical, and it corresponds to simply connected surfaces (see \eqref{chi}), where admissible symmetries have one fixed point.\\
Based on this, we got the following result:
\begin{theorem}\label{chi>0}
Assume that $\chi>0$ and $K\ge0,K\not\equiv0$. Then, problem \eqref{eqsing} has a solution if one of the following conditions holds:
\begin{itemize}
\item $\left(\Sigma,\underline\alpha,\underline\beta\right)$ is subcritical;
\item $\left(\Sigma,\underline\alpha,\underline\beta\right)$ is critical, \eqref{sym} holds and
\begin{eqnarray*}
\mbox{either }&&\mathrm{Fix}(\mathcal G)\cap\{p_1,\ldots,p_n\}=\emptyset,\\
\mbox{or }&&\tau<2+2\min_{i:\,p_i\in\mathrm{Fix}(\mathcal G)}\alpha_i;
\end{eqnarray*}
\item $\left(\Sigma,\underline\alpha,\underline\beta\right)$ is supercritical, \eqref{sym} holds and
\begin{eqnarray*}
\mbox{either }&&\mathrm{Fix}(\mathcal G)=\emptyset\quad\mbox{and }\chi<k\tau,\\
\mbox{or }&&\chi<\min\left\{2,2+2\min_{i:\,p_i\in\mathrm{Fix}(\mathcal G)}\alpha_i,k\tau\right\};
\end{eqnarray*}
here,
\begin{equation}\label{k}
k:=\min\{\#\mathcal Gx:\,x\in\Sigma\setminus\mathrm{Fix}(\mathcal G)\}\ge2.
\end{equation}
\end{itemize}
\end{theorem}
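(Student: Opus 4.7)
The plan is to look for solutions of \eqref{eqsing} as minimizers of the energy functional $\mathcal J$ defined in \eqref{j}, working on $\overline H^1(\Sigma)$ in the subcritical case and on the $\mathcal G$-invariant subspace $\widetilde H_{\mathcal G}$ in the critical and supercritical cases. In the symmetric setting, any critical point of $\mathcal J|_{\widetilde H_{\mathcal G}}$ will be a critical point of $\mathcal J$ on the full space by Palais' principle of symmetric criticality, hence a solution of \eqref{eqsing} through the mean-field formulation sketched after \eqref{j}.

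In the subcritical regime $0<\chi<\tau$, I would apply the Trudinger-Moser inequality of Proposition \ref{tm} to the interior exponential integral $\int_\Sigma\widetilde Ke^u$ and to the boundary one $\int_{\partial\Sigma}\widetilde he^{u/2}$, and combine this with the explicit form of the smooth map $\mathcal F_\chi$ to obtain a bound of the type $\mathcal J(u)\ge\varepsilon\int_\Sigma|\nabla u|^2-C$ on $\overline H^1(\Sigma)$, for some $\varepsilon>0$. This yields coercivity and weak lower semicontinuity of $\mathcal J$, while the hypothesis $K\ge0$, $K\not\equiv0$ guarantees that $\int_\Sigma\widetilde Ke^u$ stays uniformly positive along a minimizing sequence, so that $\mathcal F_\chi$ behaves well. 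A standard direct-method argument then produces a minimizer.

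In the critical and supercritical regimes, coercivity fails on $\overline H^1(\Sigma)$, and I would restore it on $\widetilde H_{\mathcal G}$ via the improved Trudinger-Moser inequality of Proposition \ref{tmimpr}. The idea is that, since $\mathrm{Fix}(\mathcal G)\cap\partial\Sigma=\emptyset$, a $\mathcal G$-symmetric concentrating sequence can only concentrate mass either at an orbit of size at least $k$, which multiplies the usual local Trudinger constant by $k$, or at an interior fixed point, where the admissible local threshold is $2$ at a regular point and $2+2\alpha_i$ at a conical fixed point $p_i$. The hypotheses of each subcase are precisely what is needed for all the resulting effective thresholds to exceed $\chi$: in the critical case, one must forbid fixed conical points altogether, or demand that the only fixed conical points have large enough order so that $\tau<2+2\min_{p_i\in\mathrm{Fix}(\mathcal G)}\alpha_i$; in the supercritical case, one further needs $\chi<k\tau$ to rule out concentration on orbits of size $k$, together with $\chi<2$ and $\chi<2+2\min_{p_i\in\mathrm{Fix}(\mathcal G)}\alpha_i$ when fixed points are present.

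The main obstacle is the sharp form of the improved Trudinger-Moser inequality in the presence of both conical singularities and corners, and in particular the bookkeeping of the local constants when a fixed point of $\mathcal G$ coincides with a conical singularity, since both effects contribute multiplicatively to the admissible concentration mass. Once Proposition \ref{tmimpr} is established in this generality, Theorem \ref{chi>0} will follow from a case-by-case verification that in each listed situation $\mathcal J$ is coercive on the appropriate space, together with the standard minimization argument outlined in the first paragraph.
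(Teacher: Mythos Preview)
Your proposal is correct and follows essentially the same approach as the paper: establish coercivity of $\mathcal J$ on $\overline H^1(\Sigma)$ (subcritical case) or on $\widetilde H_{\mathcal G}$ (critical and supercritical cases) via Propositions~\ref{tm} and~\ref{tmimpr}, using the assumption $K\gneqq0$ to ensure that the domain $H_\chi$ of $\mathcal J$ is the whole space, and then conclude by direct minimization. The only minor imprecision is your phrasing that $\int_\Sigma\widetilde Ke^u$ ``stays uniformly positive along a minimizing sequence'': what is actually used is simply that this integral is positive for every $u$, so that $H_\chi=\overline H^1(\Sigma)$ and $\mathcal F_\chi$ is defined everywhere; coercivity then handles the rest without any separate lower bound on the integral.
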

We point out that we are making a sign assumption only on the prescribed interior curvature $K$ and no assumptions at all on the prescribed boundary curvature $h$.\\

In the case $\chi=0$, we have two distinct scenarios, as discussed in \cite{bls2}: when $\widetilde K$ is positive, $\mathcal F_\chi$ is negative everywhere, making the energy functional coercive so that the problem has a minimizing solution. Conversely, when $\widetilde K$ is negative, to deal with the nonlinear term and obtain minimizers we must impose conditions on the scale invariant function $\mathfrak D:\partial\Sigma\to\mathbb R$ defined by
\begin{equation}\label{d}
\mathfrak D(x):=\frac{h(x)}{\sqrt{|K(x)|}}.
\end{equation}
Thanks to this the following result can then be stated:
\begin{theorem}\label{chi=0}
Assume $\chi=0$. Then, problem \eqref{eqsing} has a solution if one of the following conditions holds:
\begin{itemize}
\item $K(x)>0$ for some $x\in\Sigma$ and $h(y)\le0$ for all $y\in\partial\Sigma$, with $ \not\equiv0$;
\item $K(x)\ge0$ for all $x\in\Sigma$, $K\not\equiv0$, and $\int_{\partial\Sigma}h<0$;
\item $K(x)\le0$ for all $x\in\Sigma$, $K\not\equiv0$, and $h(y)\ge0,\mathfrak D(y)<1$ for all $y\in\partial\Sigma$, $h\not\equiv0$.
\end{itemize}
\end{theorem}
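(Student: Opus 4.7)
The plan is to prove existence via direct minimization of $\mathcal J$ in each case. Because $\chi=0$, the Gauss-Bonnet identity \eqref{gaussbonnet} applied after the mean-field substitution $v=u+c$, $u\in\overline H^1(\Sigma)$, reads $A(u)e^c+B(u)e^{c/2}=0$ with
$$A(u)=\int_\Sigma\widetilde Ke^u,\qquad B(u)=\int_{\partial\Sigma}\widetilde he^{u/2},$$
so one can solve explicitly for $c$ on the open set $\{A(u)B(u)<0\}\subset\overline H^1(\Sigma)$. Substituting back produces the reduced energy
$$\mathcal J(u)=\frac12\int_\Sigma|\nabla u|^2+\frac{2B(u)^2}{A(u)},$$
and the sign hypotheses on $K,h$ in the three cases single out the relevant connected component on which to minimize.

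\textbf{Cases (i)--(ii).} Here $\widetilde K\ge0$, $\widetilde K\not\equiv0$, so $A(u)>0$ for every $u$, while the condition on $h$ (pointwise $h\le0$, $h\not\equiv0$ in (i), or $\int_{\partial\Sigma}h<0$ in (ii)) gives a non-empty open component $\{A>0,B<0\}$ containing $u\equiv0$. On it the nonlinear term $2B^2/A$ is non-negative, so
$$\mathcal J(u)\ge\frac12\int_\Sigma|\nabla u|^2,$$
and coercivity follows immediately from the Poincar\'e inequality on $\overline H^1(\Sigma)$. Continuity of $A,B$ under weak $H^1$-convergence, granted by the singular Trudinger-Moser inequality (Proposition \ref{tm}), provides lower semicontinuity; a minimizer exists. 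The sign hypothesis keeps it away from the degeneracy $B=0$ where the mean-field reduction breaks down, so it lifts to a bona fide solution of \eqref{eqsing}.

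\textbf{Case (iii).} This is the delicate one. Now $A(u)<0$ and $B(u)>0$, so $2B^2/A<0$ competes with the Dirichlet energy. The scale-invariant condition $\mathfrak D<1$ translates, via \eqref{d} and \eqref{kh}, into the pointwise bound $\widetilde h^2\le\mathfrak D^2|\widetilde K|$ with $\|\mathfrak D\|_\infty<1$ on $\partial\Sigma$. A weighted Cauchy-Schwarz then yields
$$B(u)^2\le\left(\int_{\partial\Sigma}\mathfrak D^2\right)\int_{\partial\Sigma}|\widetilde K|e^u,$$
and the right-hand side is controlled by a trace-type Moser-Trudinger inequality with sharp constant determined by the Trudinger constant $\tau$ of \eqref{tau}, adapted to the singular weights $|\Xi|^{2\alpha_i},|\Xi|^{2\beta_j}$ generated by the desingularization. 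The strict bound $\|\mathfrak D\|_\infty<1$ allows the resulting exponential factor to be absorbed into a fraction of $\tfrac12\|\nabla u\|_2^2$, yielding
$$\mathcal J(u)\ge\delta\int_\Sigma|\nabla u|^2-C$$
for some $\delta>0$; direct minimization then supplies the desired solution.

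The principal obstacle is the sharp trace-type Moser-Trudinger inequality needed in case (iii): the strict scale-invariant bound $\mathfrak D<1$ on $\partial\Sigma$ must be turned into a global control by $\|\nabla u\|_2^2$ with a constant tight enough to accommodate the conical and corner weights $|\Xi|^{2\alpha_i},|\Xi|^{2\beta_j}$ at the singular points. This is the singular counterpart of the analogous estimate in the regular setting of \cite{bls2}, and its verification---likely through a boundary refinement of Proposition \ref{tm} localized near each $p_i$ and $q_j$---forms the technical core of the argument. Cases (i) and (ii) are comparatively routine once the reduced form of $\mathcal J$ is in place.
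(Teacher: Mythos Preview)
Your treatment of all three cases has substantive gaps that the paper handles differently.

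\textbf{Cases (i)--(ii).} You conflate the two, asserting $\widetilde K\ge0$ in both; this is false in case (i), where $K$ is only assumed positive \emph{somewhere}. Consequently $A(u)$ can vanish, and the domain $H_0=\{A>0\}$ has a boundary that must be analyzed: the paper checks that $\mathcal J(u_n)\to+\infty$ along sequences $u_n\rightharpoonup u_0\in\{A=0\}$ because the denominator of $2B^2/A$ vanishes while $B(u_0)<0$. More seriously, in case (ii) the minimizer may lie on $\{B=0\}$, since there the nonlinear term is zero and nothing blows up. Your claim that ``the sign hypothesis keeps it away from the degeneracy $B=0$'' is unjustified. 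The paper instead minimizes over the whole of $\overline H^1(\Sigma)$, obtains a solution of \eqref{noc} with no a priori sign on $C(u)$, and then rules out $C(u)=0$ (using $\int_{\partial\Sigma}\widetilde h<0$) and $C(u)<0$ via a separate argument: testing the equation against $e^{-u/2}$ and integrating by parts yields a contradiction. This step is not routine and you have no substitute for it.

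\textbf{Case (iii).} Your proposed route---Cauchy--Schwarz giving $B^2\le(\int_{\partial\Sigma}\mathfrak D^2)\int_{\partial\Sigma}|\widetilde K|e^u$, then a trace-type Moser--Trudinger to control $\int_{\partial\Sigma}|\widetilde K|e^u$---does not close: the ratio $\int_{\partial\Sigma}|\widetilde K|e^u/\int_\Sigma|\widetilde K|e^u$ can be arbitrarily large, and Trudinger--Moser bounds produce exponential, not quadratic, control in $\|\nabla u\|_2$. The paper uses a completely different estimate, Proposition~\ref{trace}, which gives directly
\[
\frac{B(u)^2}{|A(u)|}\le\frac{(\mathfrak D_M+\varepsilon)^2}{4}\int_\Sigma|\nabla u|^2+C,
\]
so that $\mathcal J(u)\ge\tfrac12(1-(\mathfrak D_M+\varepsilon)^2)\int_\Sigma|\nabla u|^2-C$ and coercivity follows from $\mathfrak D_M<1$. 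Crucially---and this is a point you miss entirely---the singular weights play \emph{no role} here, because $\widetilde h^2/|\widetilde K|=h^2/|K|=\mathfrak D^2$ by \eqref{kh}: the conical and corner factors cancel exactly in the ratio, so Proposition~\ref{trace} carries over from the regular setting of \cite{bls2} without any modification. Your anticipated ``technical core''---a singular boundary refinement of Proposition~\ref{tm}---is neither needed nor the right tool.
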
\

In the case $\chi<0$, minimizing solutions can be obtained under a specific assumption involving \eqref{d}, similar to Theorem \ref{chi=0}. This yields the following result, which extends \cite[Theorem 1.3]{bls2} to the singular setting:
\begin{theorem}\label{chi<0}
Assume $\chi<0$, $ K\le0$ for all $x\in\Sigma$, $ K\not\equiv0$, $\int_\Sigma\log|K|>-\infty$, and $\mathfrak D(y)<1$ for all $y\in\partial\Sigma$. Then, problem \eqref{eqsing} has a solution.
\end{theorem}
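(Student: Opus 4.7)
The plan is to produce the solution as a minimizer of $\mathcal J$ on $\overline H^1(\Sigma)$ via the direct method, extending the argument of \cite[Theorem 1.3]{bls2} to the singular setting. Once I establish that $\mathcal J$ is bounded below, coercive, and weakly lower semicontinuous on $\overline H^1(\Sigma)$, a minimizer exists and its Euler--Lagrange equation (with the Lagrange multiplier coming from the constraint $\int_\Sigma u=0$) coincides with \eqref{eqsing}, the multiplier being identified with $\tfrac{4\pi\chi}{|\Sigma|}$ upon integrating the equation and invoking \eqref{gaussbonnet}.

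Coercivity is the crucial step. A useful preliminary is the identity $\widetilde h=\mathfrak D\sqrt{|\widetilde K|}$, which follows directly from \eqref{kh} and \eqref{d} because the Green-function weight entering $\widetilde h$ is precisely the square root of the one entering $\widetilde K$; in particular the corner behaviors $d(\cdot,q_j)^{\beta_j}$ in $\widetilde h$ and $d(\cdot,q_j)^{2\beta_j}$ in $\widetilde K$ match consistently. Compactness of $\partial\Sigma$ sharpens $\mathfrak D<1$ to $\mathfrak D\le 1-\delta$ for some $\delta>0$, so Young's inequality $ab\le\tfrac{\varepsilon}{2}a^2+\tfrac{1}{2\varepsilon}b^2$ with $a=\sqrt{|\widetilde K|}e^{u/2}$, $b=\mathfrak D$, and $\varepsilon$ slightly below one gives pointwise on $\partial\Sigma$
$$\widetilde h e^{u/2}\le\frac{\varepsilon}{2}|\widetilde K|e^u+\frac{\mathfrak D^2}{2\varepsilon},$$
which bounds the boundary nonlinearity in terms of $\int_{\partial\Sigma}|\widetilde K|e^u$ up to an additive constant; this is the step where the strict inequality $\mathfrak D<1$ is essentially used.

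For the complementary lower bound on $\int_\Sigma|\widetilde K|e^u$, I would apply Jensen's inequality against the probability measure $|\widetilde K|/\int_\Sigma|\widetilde K|$:
$$\log\int_\Sigma|\widetilde K|e^u\ge\log\int_\Sigma|\widetilde K|+\frac{\int_\Sigma u\,|\widetilde K|}{\int_\Sigma|\widetilde K|}.$$
The hypothesis $\int_\Sigma\log|K|>-\infty$ is exactly what makes this Jensen step meaningful: since $|\widetilde K|$ equals $|K|$ times a smooth exponential vanishing only like $d(\cdot,p_i)^{2\alpha_i}$ at conical points (hence in $L^1$ and log-integrable for $\alpha_i>-1$), it guarantees $\int_\Sigma|\widetilde K|>0$ and $\log|\widetilde K|\in L^1$. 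Using $\int_\Sigma u=0$, Poincaré and Cauchy--Schwarz, the linear-in-$u$ term in the Jensen estimate becomes $O(\|\nabla u\|_{L^2})$ and is absorbed by $\tfrac12\|\nabla u\|_{L^2}^2$. Plugging both estimates into the expression for $\mathcal J$ and controlling the remaining boundary $L^p$ norms via the subcritical Trudinger--Moser inequality (Proposition \ref{tm}) yields a coercive lower bound $\mathcal J(u)\ge c\|\nabla u\|_{L^2}^2-C$. Weak lower semicontinuity is then standard, relying on lower semicontinuity of the Dirichlet energy and continuity of $\mathcal F_\chi$ together with the strong $L^1$ convergence of $\widetilde K e^{u_n}$ and $\widetilde h e^{u_n/2}$ along weakly convergent sequences, again granted by Proposition \ref{tm}.

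The main obstacle is the quantitative balancing at the coercivity step: one has to ensure that the constant in front of $\int_\Sigma|\widetilde K|e^u$ coming out of the combined Young plus Jensen estimate remains strictly below the coefficient dictated by the nonlinear part of $\mathcal J$, so that a genuine gap survives. This delicate pairing makes essential use of both $\mathfrak D<1$ (to gain the Young factor $\varepsilon<1$) and the specific structure of the Green-function corrections at conical points and corners; it is the step that has no counterpart in the case $\chi>0$, where coercivity is driven by the Trudinger--Moser inequality in a qualitatively different way. Once this balancing is secured, existence of the minimizer and the verification that it solves \eqref{eqsing} follow along standard lines.
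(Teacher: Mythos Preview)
Your overall strategy---minimize $\mathcal J$ on $\overline H^1(\Sigma)$ by establishing coercivity, using Jensen for a lower bound on $\int_\Sigma|\widetilde K|e^u$ and the condition $\mathfrak D<1$ to tame the boundary term---matches the paper's. However, the coercivity argument as you sketch it has a genuine gap.

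Your pointwise Young inequality on $\partial\Sigma$ yields
\[
\int_{\partial\Sigma}\widetilde h\,e^{u/2}\le\frac{\varepsilon}{2}\int_{\partial\Sigma}|\widetilde K|e^u+C,
\]
but the right-hand side is a \emph{boundary} integral of $|\widetilde K|e^u$, whereas $\mathcal F_\chi$ only involves the \emph{interior} integral $A=\int_\Sigma\widetilde K e^u$. You propose to dispose of $\int_{\partial\Sigma}|\widetilde K|e^u$ via Proposition~\ref{tm}, but that inequality is logarithmic: it would give at best $\int_{\partial\Sigma}|\widetilde K|e^u\le \exp\bigl(c\int_\Sigma|\nabla u|^2+C\bigr)$, an exponential in the Dirichlet energy, which destroys any hope of a coercive lower bound $\mathcal J(u)\ge c\|\nabla u\|_{L^2}^2-C$. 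There is no ``subcritical'' regime here that saves this, because for $\chi<0$ the relevant nonlinear term in $\mathcal F_\chi$ is not logarithmic but rational.

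The paper closes this gap with two ingredients you do not invoke. First, it unpacks the specific structure of $\mathcal F_\chi$ for $\chi<0$ via Lemma~\ref{f}, reducing the nonlinear term to the quadratic quantity $\bigl(\int_{\partial\Sigma}\widetilde h e^{u/2}\bigr)_+^2\big/\int_\Sigma|\widetilde K|e^u$. Second, and crucially, it bounds this ratio directly by the Dirichlet energy through the trace-type estimate of Proposition~\ref{trace},
\[
\frac{\bigl(\int_{\partial\Sigma}\widetilde h e^{u/2}\bigr)^2}{\int_\Sigma|\widetilde K|e^u}\le\frac{(\mathfrak D_M+\varepsilon)^2}{4}\int_\Sigma|\nabla u|^2+C,
\]
which is precisely the missing link between the boundary and interior integrals. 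Your identity $\widetilde h=\mathfrak D\sqrt{|\widetilde K|}$ is indeed the starting point of that proposition, but the pointwise Young step is applied at the wrong level; what is needed is this integrated trace inequality, not a Moser--Trudinger bound. Once you replace your Young/Trudinger--Moser combination by Lemma~\ref{f} and Proposition~\ref{trace}, the rest of your outline (Jensen for the log term, then coercivity and direct minimization) goes through exactly as in the paper.
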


Quite remarkably, singularities do not play any role in Theorem \ref{chi=0} nor in Theorem \ref{chi<0}. This is essentially due to the fact that the ratio \eqref{d} is invariant when $h$ is replaced with $\widetilde h$ and $K$ is replaced with $\widetilde K$, in light of the definition in \eqref{kh}.\\

The next natural step is to remove the symmetry assumptions in Theorem \ref{chi>0}. This is addressed through a strategy that involves a topological analysis of the sublevel sets of \eqref{j}: 
$$\mathcal J^a=\left\{u\in \overline H^1(\Sigma):\,\mathcal J(u)\le a\right\}.$$
This approach, originally introduced in \cite{dm}, has become a standard technique for handling mean-field equations. A key objective is to show that low sublevels are not contractible. In fact, when $\mathcal{J}(u)\ll0$, the measure 
$$\frac{\widetilde Ke^u}{\int_\Sigma\widetilde Ke^u}$$
tends to concentrate around a finite number of points, a phenomenon that can be rigorously established using localized versions of the Trudinger-Moser inequality.\\
A fundamental consequence of this concentration behavior is that the topology of low sublevels is inherited from the space of finitely supported measures, known as barycenters. This provides the framework for proving the existence of min-max type solutions. While this technique has been widely applied in closed surfaces, to the best of our knowledge, this is the first time it has been used to prescribe two non-vanishing curvatures on surfaces with boundary.\\
Given that the Palais-Smale condition is not known for mean-field type problems, we employ a variation of the \emph{monotonicity trick}, first introduced by Struwe in \cite{s}. In simple terms, we introduce an auxiliary parameter for which the Palais-Smale condition holds for almost every value, then we consider a sequence of such parameters satisfying the Palais-Smale and attempt to pass to the limit, for which we need the compactness of solutions to the corresponding auxiliary problems. Since this approach requires monotonicity with respect to the parameter, its choice is particularly delicate due to the new nonlinear term $\mathcal F_\chi$, which involves both interior and boundary integrals.\\
This method allows us to bypass the Palais-Smale condition, provided that $4\pi \chi$ does not belong to the discrete set 
\begin{eqnarray}
\label{gamma}\Gamma=\Gamma\left(\underline\alpha,\underline\beta\right)&:=&\left\{4\pi k+8\pi\sum_{i\in I}(1+\alpha_i)+4\pi\sum_{j\in J}(1+\beta_j):\right.\\
\nonumber&&\left.\phantom{\sum_{j\in J}}k\in\mathbb N_0,\,I\subset\{1,\dots,n\},\,J\subset\{1,\dots,m\}\right\}.
\end{eqnarray}
Solutions to \eqref{eqsing} may have new and complex blow-up phenomena, compared to classical Liouville-type problems, especially in the case when $K$ and $h$ have opposite signs, or may change sign, since the \emph{masses} $\int_\Sigma\widetilde Ke^u$ and $\int_{\partial\Sigma} \widetilde he^{u/2}$ might both diverge to infinity with different signs; concentration phenomena have not been classified yet in full generality.\\
To address this issue, we restrict ourselves to sequences with finite Morse index; thanks to a result by Fang and Ghoussoub \cite{fg}, we suffice to consider the finite Morse index case because we are using a finite-dimensional min-max argument. In fact, one of the main novelties of this work is the identification of a relationship between the Morse indices of two different variational formulations of the problem. This link is used to exploit the fact that the direct one is particularly suitable for this specific analysis (see Subsection \ref{relation}).\\
Since our approach relies on a min-max argument, compactness properties are essential. A detailed blow-up analysis is provided in Section \ref{blowup}. \\

In order to prove the existence of conformal metrics in the supercritical case, we assume that the singularities $\alpha_i,\beta_j$ satisfy the following inequalities:
\begin{equation}\label{alphabetacond}
\alpha_i\ge-\frac12,\,\forall i=1,\dots,n,\qquad\beta_j\ge0,\,\forall j\in1,\dots, m.
\end{equation}
By \eqref{tau}, one can verify that this assumption is equivalent to assuming the Trudinger constant to be equal to $1$, hence to get a Trudinger-Moser inequality with the same constant.\\
We also need $\Sigma$ to be multiply connected, in order to apply a topological argument similar 	to \cite{bdm,bjmr,bjwy}.\\
We therefore get the following result, which is proved in Subsection \ref{proof}:
\begin{theorem}\label{minmax}
Let $\Sigma$ be a supercritical singular surface with conical singularities and/or corners as in \eqref{alphabetacond}. Assume that:
\begin{itemize}
\item $K>0$ on $\overline\Sigma$;
\item The surface's boundary has at least two components:
$$\partial\Sigma=\bigcup_{k=1}^l\gamma_k\qquad l\ge2;$$
\item There exists at least one boundary component without corners, w.l.o.g.
$$q_j\notin\gamma_1\quad\mbox{for all }j\in\{1,\dots,m\}.$$ 
\end{itemize}
If $4\pi\chi\notin\Gamma$, then there exists a conformal metric with Gaussian curvature $\widetilde K$ and geodesic curvature $\widetilde h$.
\end{theorem}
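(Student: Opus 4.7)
The plan is to obtain the solution as a min-max critical point of the functional $\mathcal{J}$ from \eqref{j}, following the topological scheme introduced in \cite{dm} and adapted to multiply-connected settings in \cite{bdm,bjmr,bjwy}, combined with the blow-up analysis of Section \ref{blowup} and a variant of Struwe's monotonicity trick \cite{s}. First I would analyse the low sublevels $\mathcal{J}^{-L}$ for $L\gg 0$: since $K>0$ on $\overline\Sigma$ and the Trudinger constant equals $1$ thanks to \eqref{alphabetacond}, localized Trudinger-Moser inequalities force, as $\mathcal{J}(u)\to-\infty$, the normalized measure $\widetilde K e^u/\int_\Sigma\widetilde K e^u$ to concentrate at a single point of $\overline\Sigma$; supercriticality $\chi>\tau=1$ is precisely what makes such an $L$ exist. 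I would then build test functions of the form $\varphi_{x,\lambda}(y)=-2\log(1+\lambda^2 d(y,x)^2)-c_{x,\lambda}$ centered at points $x\in\gamma_1$, with $c_{x,\lambda}$ enforcing zero mean. The hypothesis that $\gamma_1$ has no corners is used exactly here: at a corner $q_j$ the local model is a planar sector of angle $\pi(1+\beta_j)$ and the energy estimate for a concentrating bubble changes (indeed, the corresponding mass belongs to a different branch of $\Gamma$). Together with a continuous retraction from $\mathcal{J}^{-L}$ onto $\gamma_1$ built from the concentration property, these test functions yield a map $\Phi\colon\gamma_1\to\mathcal{J}^{-L}$ which is a homotopy equivalence onto its image inside $\mathcal{J}^{-L}$ but is contractible in the ambient space $\overline H^1(\Sigma)$, e.g.\ by sliding the concentration point through the interior onto another boundary component, which is the point where the assumption $l\ge 2$ enters. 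This produces the desired min-max geometry.

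Second, I would apply Struwe's monotonicity trick to bypass the possible failure of the Palais-Smale condition. Introducing an auxiliary parameter $t$ and a family $\mathcal{J}_t$ whose min-max level $c_t$ is monotone in $t$, one obtains for a.e.\ $t$ close to the target value a bounded Palais-Smale sequence, and hence a critical point $u_t$ of $\mathcal{J}_t$. Since the min-max scheme is finite-dimensional, the abstract result of Fang-Ghoussoub \cite{fg} guarantees that these $u_t$ may be chosen with uniformly bounded Morse index for the second variation of $\mathcal{J}_t$; by the Morse-index comparison established in Subsection \ref{relation}, the same bound transfers to the natural Hessian of $u_t$ regarded as a solution of the corresponding de-singularized equation, which is the framework in which the blow-up analysis is carried out.

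Finally, I would pass to the limit along a sequence $t_k\to 1$, and this is where the main obstacle lies. The compactness of $\{u_{t_k}\}$ must be established via the blow-up analysis of Section \ref{blowup}: under the finite Morse index constraint, if blow-up occurs then the total masses concentrated at regular points, conical singularities and corners are quantized, and the corresponding limit value of $4\pi\chi$ along the sequence is forced to lie in the set $\Gamma$ described by \eqref{gamma}. The hypothesis $4\pi\chi\notin\Gamma$ then rules out blow-up, so $u_{t_k}\to u$ strongly and $u$ is the sought critical point of $\mathcal{J}$. The most delicate step is precisely this quantization: because $h$ has no sign and the masses $\int_\Sigma\widetilde K e^u$ and $\int_{\partial\Sigma}\widetilde h e^{u/2}$ may diverge simultaneously with independent signs, one must classify all mixed interior-boundary concentration profiles of bounded Morse index, and it is here that the three contributions in \eqref{gamma} (regular, conical, corner) enter in full generality.
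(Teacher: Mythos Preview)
Your overall architecture---min-max on barycenters of a boundary circle, Struwe's monotonicity trick with an auxiliary parameter, the Fang--Ghoussoub Morse-index bound, and the compactness from Section~\ref{blowup} via the condition $4\pi\chi\notin\Gamma$---is exactly the paper's strategy. The gap is in the topological step.

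You assert that localized Trudinger--Moser inequalities force the normalized measure $\widetilde Ke^u/\int_\Sigma\widetilde Ke^u$ to concentrate at a \emph{single} point when $\mathcal J(u)\to-\infty$, and accordingly you use single bubbles and the curve $\gamma_1$ itself. This is only true in the first supercritical interval. In general, if $4\pi\chi$ lies between the $k$-th and $(k{+}1)$-st thresholds of $\Gamma$, the improved inequality (Corollary~\ref{tmimprint} with $k{+}1$ regions) only forces concentration at $k$ points, and for $k\ge2$ there exist $u$ with $\mathcal J(u)\ll0$ whose mass is genuinely split among several points (e.g.\ a sum of $k$ separated bubbles). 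Your projection $\mathcal J^{-L}\to\gamma_1$ then does not exist. The paper fixes this by working with the $k$-th barycenter space $(\gamma_1)_k$, multi-bubble test functions
\[
\varphi_{\Lambda,\sigma}(x)=\log\sum_{i=1}^k t_i\frac{\Lambda^2}{(1+\Lambda^2 d(x,x_i)^2)^2},\qquad \sigma=\sum_{i=1}^k t_i\delta_{x_i}\in(\gamma_1)_k,
\]
and the non-contractibility of $(\gamma_1)_k$ (Lemma~\ref{bary}); the min-max class is then the cone $\widehat{(\gamma_1)_k}$, which also gives the dimension bound $2k$ needed for the Morse-index control.

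A smaller point: your explanation of where $l\ge2$ enters is off. Contractibility of $\Phi(\gamma_1)$ in the linear space $\overline H^1(\Sigma)$ is automatic and proves nothing; ``sliding to another boundary component'' keeps you inside $\mathcal J^{-L}$, not out of it. The actual role of $l\ge2$ (Lemma~\ref{retr}) is to build a retraction $\mathcal R:\overline\Sigma\to\gamma_1$ by collapsing the complement of a neighborhood of $\gamma_1\cup\gamma_2$ and a connecting arc to a point, so that the quotient is a cylinder; this is what allows the projection $\widetilde\Psi:\mathcal J^{-L}\to(\gamma_1)_k$ with $\widetilde\Psi\circ\Phi$ homotopic to the identity.
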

As for Theorem \ref{chi>0}, we emphasize that this result only needs a mild sign assumption on $K$ and no assumption at all on $h$.

\subsection{Notations}

\

An open ball of center $x$ and radius $r$ will be denoted by $B_r(x)$, while $A_{r,R}(x)$ stands for an open annulus of center $p$ and radii $0<r<R$. We also denote upper half-balls as
$$B^+_r(x):=\left\{(s,t)\in B_r(x)\subset\mathbb R^2:t\ge0\right\},$$
and the upper half-plane as
$$\mathbb R^2_+=\left\{(s,t)\in\mathbb R^2:t\ge0\right\}.$$
Given a set $\Omega\subset\overline\Sigma$ and $\delta>0$, we denote the $\delta$-neighborhood of $\Omega$ as
$$\Omega^\delta=\left\{x\in\overline\Sigma: d(x,\Omega)<r\right\}.$$
The \emph{average} of $u\in H^1(\Sigma)$ will be denoted as
$$\overline u:=\frac1{|\Sigma|}\int_\Sigma u.$$
We denote the positive and negative parts of a real number $t$ respectively as
$$t_+:=\max\{t,0\},\qquad t_-:=\max\{-t,0\}.$$
We shall use $o(1)$ and $O(1)$ in the standard sense to denote quantities that converge to $0$ or are bounded, respectively.

\

\section{Variational formulation}\label{meanfield}

We will devote the first part of this section to introduce the mean-field formulation that we will use to study the problem. This is mostly an adaptation for the singular case of the one proposed in \cite{bls2} for the study of regular case, so we will limit ourselves to collect the main results modifying the notation to the singular case; we recommend to the interested reader the Section 2 of \cite{bls2} for proofs and comments.\\
The last part of this section is devoted to a useful comparison between the direct and the mean-field formulations.

\subsection{Mean field formulation}

\

Following the approach of \cite[Proposition 2.1]{bls2} we can see that the problem \eqref{eqsing} is equivalent to a mean-field type problem.
\begin{proposition}
Problem \eqref{eqsing} is solvable if the following problem has a solution.
\begin{equation}\label{eqmeanfield}
\left\{\begin{array}{ll}
-\Delta u+\frac{4\pi\chi}{|\Sigma|}=2C^2(u)\widetilde Ke^u&\mbox{in }\Sigma\\
\partial_\nu u=2C(u)\widetilde he^{u/2}&\mbox{on }\partial\Sigma\\
C(u)>0\end{array}\right.,
\end{equation}
where $C(u)$ is a root of
\begin{equation}\label{c}
C^2(u)\int_\Sigma\widetilde Ke^u+C(u)\int_{\partial\Sigma}\widetilde he^{u/2}=2\pi\chi
\end{equation}
given by
$$C(u)=\left\{
\begin{array}{ll}
\frac{4\pi\chi}{\sqrt{\left(\int_{\partial\Sigma}\widetilde he^{u/2}\right)^2+8\pi\chi\int_\Sigma\widetilde Ke^u}+\int_{\partial\Sigma}\widetilde he^{u/2}}&\chi>0\\
-\frac{\int_{\partial\Sigma}\widetilde he^{u/2}}{\int_\Sigma\widetilde Ke^u}&\chi=0\\
\frac{4\pi|\chi|}{\sqrt{\left(\int_{\partial\Sigma}\widetilde he^{u/2}\right)^2-8\pi|\chi|\int_\Sigma\widetilde Ke^u}-\int_{\partial\Sigma}\widetilde he^{u/2}}&\chi<0
\end{array}\right..$$
\end{proposition}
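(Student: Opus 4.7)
The plan is to exhibit an explicit change of variables that turns a solution of the mean-field problem \eqref{eqmeanfield} into a solution of \eqref{eqsing}, and then to derive the announced formula for $C(u)$ as the positive root of the algebraic identity \eqref{c} by the quadratic formula.

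First I would reverse-engineer the correct substitution. Since adding a constant $c$ to $u$ scales $e^u$ by $e^c$ and $e^{u/2}$ by $e^{c/2}$, one is naturally led to set
$$v:=u+2\log C(u),$$
so that $e^v=C^2(u)e^u$ and $e^{v/2}=C(u)e^{u/2}$. Because $-\Delta v=-\Delta u$ and $\partial_\nu v=\partial_\nu u$, substituting into \eqref{eqmeanfield} immediately shows that $v$ satisfies \eqref{eqsing}. Note that this is where the requirement $C(u)>0$ enters: only then is the logarithm meaningful.

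Next I would justify the formula for $C(u)$. Integrating the interior equation of \eqref{eqmeanfield} over $\Sigma$, applying the divergence theorem, and using the boundary condition yields
$$4\pi\chi-2C(u)\int_{\partial\Sigma}\widetilde he^{u/2}=2C^2(u)\int_\Sigma\widetilde Ke^u,$$
which is precisely \eqref{c}. Writing $A:=\int_\Sigma\widetilde Ke^u$ and $B:=\int_{\partial\Sigma}\widetilde he^{u/2}$, the identity \eqref{c} becomes $AC^2+BC-2\pi\chi=0$. For $\chi>0$ the positive root of this quadratic is $C=\frac{-B+\sqrt{B^2+8\pi\chi A}}{2A}$; multiplying numerator and denominator by the conjugate $\sqrt{B^2+8\pi\chi A}+B$ gives the announced form with $4\pi\chi$ on top. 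For $\chi=0$ the equation degenerates to $C(AC+B)=0$ and the only (nontrivial) positive root is $C=-B/A$. For $\chi<0$ one repeats the computation with $|\chi|=-\chi$ and selects the root that is positive, obtaining after the conjugate trick the stated expression.

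The step requiring care is checking that in each case the chosen root is indeed real and positive, since the problem places no sign assumption on $A$ or $B$ at this stage. However, the proposition only claims that \eqref{eqsing} is solvable provided the mean-field formulation \eqref{eqmeanfield} has a solution with $C(u)>0$, so existence of a positive root is part of the hypothesis and the above computation just identifies which algebraic expression this root must coincide with. The existence and selection issue is thus postponed to the variational study in the later sections, where the sign conditions on $K,h$ and on $\chi$ in Theorems \ref{chi>0}--\ref{chi<0} precisely guarantee that a genuine critical point of $\mathcal J$ provides an admissible $C(u)$.
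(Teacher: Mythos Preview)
Your argument is correct and is exactly the intended one: the paper does not spell out a proof here but simply refers to \cite[Proposition 2.1]{bls2}, where the same substitution $v=u+2\log C(u)$ and the same quadratic-formula identification of $C(u)$ are carried out. Your remark that positivity of $C(u)$ is part of the hypothesis (so that the real-and-positive issue is deferred to the later variational analysis) matches the paper's comment following the proposition about selecting the branch that extends the unique root.
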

In the cases when \eqref{c} has two positive solutions, depending on the values of the two integrals, two possible choices are allowed, getting to equivalent formulations. To be consistent with the global analysis, we choose the branch of solutions which extends the branch of unique solutions when multiplicity fails.\\
The main reason why we introduce this mean-field formulation is that it admits an energy functional whose geometry can be studied in a simpler way than if we used the one given by the direct formulation.
\begin{proposition}\label{meanfieldform}
Solutions to \eqref{eqmeanfield} are critical points of the energy functional $\mathcal J$ defined in \eqref{j} on the space
\begin{equation}\label{hchi}
H_\chi=\left\{u\in\overline H^1(\Sigma):\left\{
\begin{array}{ll}
\int_\Sigma\widetilde Ke^u>-\frac1{8\pi\chi}\left(\int_{\partial\Sigma}\widetilde he^{u/2}\right)_+^2&\chi>0\\
\int_\Sigma\widetilde Ke^u\int_{\partial\Sigma}\widetilde he^{u/2}<0&\chi=0\\
\int_\Sigma\widetilde Ke^u<\frac1{8\pi|\chi|}\left(\int_{\partial\Sigma}\widetilde he^{u/2}\right)_-^2&\chi<0
\end{array}\right\}\right\};
\end{equation}
where 
\begin{equation}\label{fab}
\mathcal F_\chi(A,B)=\left\{\begin{array}{ll}8\pi\chi\left(\log{\left(\sqrt{B^2+8\pi\chi A}+B\right)}+\frac B{\sqrt{B^2+8\pi\chi A}+B}\right)&\chi>0\\
-2\frac{B^2}A&\chi=0\\
8\pi|\chi|\left(-\log{\left(\sqrt{B^2-8\pi|\chi|A}-B\right)}+\frac B{\sqrt{B^2-8\pi|\chi|A}-B}\right)&\chi<0\end{array}\right..
\end{equation}
\end{proposition}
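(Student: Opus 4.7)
The plan is to derive the Euler--Lagrange equation of $\mathcal{J}$ on the affine subspace $\overline{H}^1(\Sigma)$ by a direct chain-rule computation, together with the usual Lagrange-multiplier argument to handle the mean-zero constraint, and to identify the resulting system with \eqref{eqmeanfield}. As a preliminary one checks that $H_\chi$ is open in $\overline{H}^1(\Sigma)$ (by continuity of $u\mapsto\int_\Sigma\widetilde{K}e^u$ and $u\mapsto\int_{\partial\Sigma}\widetilde{h}e^{u/2}$ via Moser--Trudinger) and that $\mathcal{F}_\chi$ is smooth on the image of $H_\chi$ under $u\mapsto(A(u),B(u)):=\bigl(\int_\Sigma\widetilde{K}e^u,\int_{\partial\Sigma}\widetilde{h}e^{u/2}\bigr)$: the three inequalities in \eqref{hchi} are precisely what guarantee $\sqrt{B^2\pm 8\pi|\chi|A}\pm B>0$ for $\chi\neq0$ (so the logarithm and the fraction are well-defined, and $C(u)>0$) and $AB\neq0$ for $\chi=0$.

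The heart of the argument is the identity
\begin{equation*}
\partial_A\mathcal{F}_\chi(A,B)=2\,C(A,B)^2,\qquad \partial_B\mathcal{F}_\chi(A,B)=4\,C(A,B),
\end{equation*}
where $C(A,B)$ is the positive root of \eqref{c}. For $\chi>0$, writing $S:=\sqrt{B^2+8\pi\chi A}$ and $C=4\pi\chi/(S+B)$, one uses $\partial_AS=4\pi\chi/S$, $\partial_BS=B/S$, and the algebraic simplifications $(S+B)^2+8\pi\chi A=2S(S+B)$ and $S^2-B^2=8\pi\chi A$ to collapse the result to the desired form; the cases $\chi=0$ and $\chi<0$ are analogous. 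A cleaner conceptual route is to differentiate the defining relation $C^2A+CB=2\pi\chi$ to get $C_A=-C^2/(2CA+B)$ and $C_B=-C/(2CA+B)$, whence $CC_B=C_A$; this is exactly the closedness of the $1$-form $2C^2\,dA+4C\,dB$, which therefore admits a primitive on the simply-connected components of the region cut out by \eqref{hchi}, matching (up to a constant) the explicit expression \eqref{fab}.

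Once these identities are in hand, the chain rule gives, for $u\in H_\chi$ and $\varphi\in\overline{H}^1(\Sigma)$,
\begin{equation*}
\langle d\mathcal{J}(u),\varphi\rangle=\int_\Sigma\nabla u\cdot\nabla\varphi-2C(u)^2\int_\Sigma\widetilde{K}e^u\varphi-2C(u)\int_{\partial\Sigma}\widetilde{h}e^{u/2}\varphi,
\end{equation*}
the factor $\tfrac12$ coming from $\partial_ue^{u/2}$ having combined with $\partial_B\mathcal{F}_\chi=4C$. A critical point of $\mathcal{J}$ on $\overline{H}^1(\Sigma)$ is a function for which this vanishes on every mean-zero test function; by the standard Lagrange multiplier argument there exists $\lambda\in\mathbb{R}$ such that the same expression equals $\lambda\int_\Sigma\varphi$ for all $\varphi\in H^1(\Sigma)$, and integrating by parts yields
\begin{equation*}
-\Delta u+\lambda=2C(u)^2\widetilde{K}e^u\ \text{in }\Sigma,\qquad \partial_\nu u=2C(u)\widetilde{h}e^{u/2}\ \text{on }\partial\Sigma.
\end{equation*}
Testing with $\varphi\equiv 1$ (equivalently, integrating the interior equation and using the boundary condition) together with the algebraic constraint \eqref{c} pins down $\lambda=-4\pi\chi/|\Sigma|$, recovering \eqref{eqmeanfield}. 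The only non-routine step is the algebraic verification of the partial-derivative identities for $\mathcal{F}_\chi$; everything else is bookkeeping.
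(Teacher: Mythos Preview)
Your argument is correct and is precisely the ``straightforward verification'' the paper defers to \cite[Proposition~2.4]{bls2}: compute $\partial_A\mathcal F_\chi=2C^2$, $\partial_B\mathcal F_\chi=4C$, apply the chain rule, and handle the zero-mean constraint by a Lagrange multiplier. One small slip: after integration by parts your equation should read $-\Delta u-\lambda=2C(u)^2\widetilde Ke^u$ (not $+\lambda$), which together with $\lambda=-4\pi\chi/|\Sigma|$ gives the correct sign in \eqref{eqmeanfield}; the two errors happen to cancel in your conclusion, but the intermediate line is off.
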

\begin{proof}
It is a straightforward verification as in \cite[Proposition 2.4]{bls2}.
\end{proof}
Clearly, one has to make sure that the energy functional is defined on a nonempty space. To do so, certain considerations with $\widetilde K$ and $\widetilde h$ must be taken into account in order to obtain a solution, arguing as in \cite[Lemma 2.5]{bls2}.
\begin{lemma}
The space $H_\chi$ is non-empty if and only if:
\begin{enumerate}
\item if $\chi>0$, $\widetilde K(x)>0$ for some $x\in\Sigma$ or $\widetilde h(y)>0$ for some $y\in\partial\Sigma$;
\item if $\chi=0$, $\widetilde K(x)\widetilde h(y)<0$ for some $x\in\Sigma$, $y\in\partial\Sigma$;
\item if $\chi<0$, $\widetilde K(x)<0$ for some $x\in\Sigma$ or $\widetilde h(y)<0$ for some $y\in\partial\Sigma$.
\end{enumerate}
Moreover, if such conditions are not satisfied, then problem \eqref{eqsing} is not solvable.
\end{lemma}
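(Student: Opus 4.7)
The plan is to adapt the argument of \cite[Lemma 2.5]{bls2} to the singular setting, handling the three sign cases of $\chi$ separately. Writing $A(u) = \int_\Sigma \widetilde K e^u$ and $B(u) = \int_{\partial\Sigma} \widetilde h e^{u/2}$, a direct algebraic inspection shows that the constraint in \eqref{hchi} is equivalent to requiring that the quadratic equation \eqref{c} admits a positive root $C(u)$, which in turn is exactly what is needed for $\mathcal F_\chi(A(u),B(u))$ in \eqref{fab} to be well-defined. So the task reduces, in each case, to producing test functions realizing the corresponding sign/size constraints on the pair $(A,B)$, and to ruling out solutions otherwise.

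The unsolvability direction is the easy one and uses only the integrated Gauss--Bonnet identity $A(u)+B(u)=2\pi\chi$ derived from \eqref{eqsing}. For $\chi>0$, if $\widetilde K\le 0$ and $\widetilde h\le 0$ everywhere, then $A(u)+B(u)\le 0<2\pi\chi$, a contradiction; the case $\chi<0$ is symmetric. For $\chi=0$, if $\widetilde K(x)\widetilde h(y)\ge 0$ for all $(x,y)$, then $A(u)$ and $B(u)$ must share a sign, and $A(u)+B(u)=0$ forces $A(u)=B(u)=0$; since $e^u>0$ pointwise, this requires $\widetilde K\equiv 0$ and $\widetilde h\equiv 0$, contradicting the implicit non-triviality of the prescribed curvatures.

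For the non-emptiness of $H_\chi$, I would construct test functions by concentration. In the case $\chi>0$, pick $x_0\in\Sigma$ with $\widetilde K(x_0)>0$ (respectively $y_0\in\partial\Sigma$ with $\widetilde h(y_0)>0$), away from the singular set, take a smooth cutoff $\psi\ge 0$ supported in a small ball disjoint from $\{p_i,q_j\}$ and from $\partial\Sigma$ (respectively a boundary bump), and set $u_n=n\psi-n\overline\psi$ so that $\int_\Sigma u_n=0$. Using $\widetilde K\ge c>0$ on $\mathrm{supp}(\psi)$ and $|\mathrm{supp}(\psi)|<|\Sigma|$ one checks that $A(u_n)\to+\infty$ while $B(u_n)$ remains bounded (or, in the boundary-bump variant, $B(u_n)\to+\infty$ with $A(u_n)$ controlled), so the constraint in \eqref{hchi} holds for $n$ large. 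The case $\chi<0$ is the symmetric image. For $\chi=0$ one uses a two-bump test function $u_n=n\psi_1+n\psi_2-\text{mean}$, with $\psi_1$ concentrated near a point where $\widetilde K>0$ and $\psi_2$ near a point where $\widetilde h<0$ (or vice versa), tuning the amplitudes so that the dominant contributions to $A(u_n)$ and $B(u_n)$ carry opposite signs.

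The conical singularities $p_i$ and corners $q_j$ do not obstruct any of these constructions: the weights $\widetilde K,\widetilde h$ have only integrable power-type singularities at these points (since $\alpha_i,\beta_j>-1$) and are smooth and non-vanishing away from the singular set, so the supports of test bumps can always be chosen disjoint from $\{p_1,\dots,p_n,q_1,\dots,q_m\}$. I expect the only delicate point to be the simultaneous sign balancing in the $\chi=0$ case, where the two-bump amplitudes must be selected in a specific regime to ensure $A(u_n)B(u_n)<0$ rather than both diverging with the same sign; this is precisely the step carried out in \cite[Lemma 2.5]{bls2} and its transfer to the singular setting is routine given the above local behavior of $\widetilde K,\widetilde h$.
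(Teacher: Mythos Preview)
Your proposal is correct and matches the paper's approach exactly: the paper gives no proof of its own for this lemma, merely stating that one argues ``as in \cite[Lemma 2.5]{bls2}'', which is precisely what you do. Your sketch in fact supplies more detail than the paper, including the observation that the singular points can be avoided by the test bumps since $\widetilde K,\widetilde h$ are smooth and non-vanishing away from $\{p_i,q_j\}$; the only soft spot is the $\chi=0$ unsolvability clause in the degenerate case where one curvature is identically zero and the other changes sign, which you correctly flag as relying on an implicit non-triviality assumption.
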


\subsection{Relation with the direct formulation}\label{relation}

\

Solution to the problem \eqref{eqsing} can be also seen as critical points to the following energy functional on the whole space $H^1(\Sigma)$:
\begin{equation}\label{i}
\mathcal I(u)=\frac12\int_\Sigma|\nabla u|^2+\frac{4\pi\chi}{|\Sigma|}\int_\Sigma u-2\int_\Sigma\widetilde Ke^u-4\int_{\partial\Sigma}\widetilde he^{u/2}.
\end{equation}
This is the so-called \emph{direct formulation}, which was the approach to regular problem in the papers \cite{lsmr,lsrsr}.\\

In order to stress the relation between the energy functionals \eqref{i} and \eqref{j}, we introduce the following submanifold:
$$\mathcal M:=\left\{u\in\overline H^1\left(\Sigma\right):\int_\Sigma\widetilde Ke^u+\int_{\partial\Sigma}\widetilde he^{u/2}=2\pi\chi\right\},$$
which is just the constraint given by the Gauss-Bonnet formula \eqref{gaussbonnet}. One can easily check that the two functionals coincide, up to a constant, on this manifold: $\mathcal I|_\mathcal M=\mathcal J|_\mathcal M+c$.\\
The same argument works for non geometrical version of both problems, that is
$$\left\{\begin{array}{ll}-\Delta u+\frac\lambda{|\Sigma|}=2C^2(u)\widetilde Ke^u&\mbox{in }\Sigma\\
\partial_\nu u=2C(u)\widetilde he^{u/2}&\mbox{on }\partial\Sigma\\
C(u)=\frac{4\pi\chi}{\sqrt{\left(\int_{\partial\Sigma}\widetilde he^{u/2}\right)^2+8\pi\chi\int_\Sigma\widetilde Ke^u}+\int_{\partial\Sigma}\widetilde he^{u/2}}>0\end{array}\right.,$$
and
$$\left\{\begin{array}{ll}-\Delta u+\frac\lambda{|\Sigma|}=2\widetilde Ke^u&\mbox{in }\Sigma\\
\partial_\nu u=2\widetilde he^{u/2}&\mbox{on }\partial\Sigma\\
\end{array}\right.,$$
whose energy functionals are given respectively by
$$\mathcal J_\lambda(u):=\frac12\int_\Sigma|\nabla u|^2-\mathcal F_{\frac\lambda{4\pi}}\left(\int_\Sigma\widetilde Ke^u,\int_{\partial\Sigma}\widetilde he^{u/2}\right)$$
and
$$\mathcal I_\lambda(u):=\frac12\int_\Sigma|\nabla u|^2+\frac\lambda{|\Sigma|}\int_\Sigma u-2\int_\Sigma\widetilde Ke^u-4\int_{\partial\Sigma}\widetilde he^{u/2}.$$\

The following result highlights an important relation between the energy functionals $\mathcal I_\lambda$ and $\mathcal J_\lambda$ related to the two formulation, as it shows that their Morse index on solutions may differ by only a finite number. This will be used in Section \ref{blowup}, where finiteness of Morse index will be essential to classify concentration phenomena.
\begin{proposition}\label{morseij}
There exists a constant $c_\lambda$ such that $\mathcal I_\lambda$ coincides with $\mathcal J_\lambda+c_\lambda$ on the submanifold
$$\mathcal M_\lambda:=\left\{u\in\overline H^1\left(\Sigma\right):\int_\Sigma\widetilde Ke^u+\int_{\partial\Sigma}\widetilde he^{u/2}=\frac\lambda2\right\}.$$
Moreover, $|\mathrm{ind}(u)-\mathrm{ind^*}(u)|\le1$ where $\mathrm{ind}(u)$ denotes the Morse index of $u$ associated to $\mathcal J_\lambda$:
\begin{equation}\label{index}
\mathrm{ind}(u):=\sup\left\{\dim E:E\subset H^1(\Sigma),\,\mathcal J_\lambda''(u)[\phi,\phi]<0\,\forall\phi\in E\setminus\{0\}\right\};
\end{equation}
and $\mathrm{ind^*}(u)$ is the Morse index associated to the functional $\mathcal I_\lambda$:
\begin{equation}\label{indexi}
\mathrm{ind^*}(u):=\sup\left\{\dim E:E\subset H^1(\Sigma),\,\mathcal I_\lambda''(u)[\phi,\phi]<0\,\forall\phi\in E\setminus\{0\}\right\}.
\end{equation}
\end{proposition}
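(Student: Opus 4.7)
The plan is to establish the functional identity $\mathcal{I}_\lambda = \mathcal{J}_\lambda + c_\lambda$ on $\mathcal{M}_\lambda$ by a direct computation, and then derive the Morse index bound from the general principle that two functionals agreeing on a codimension-$k$ submanifold have Morse indices differing by at most $k$, applied here with $k=2$. For the identity, I would set $A = \int_\Sigma \widetilde K e^u$ and $B = \int_{\partial\Sigma}\widetilde h e^{u/2}$ and work on $\mathcal{M}_\lambda$, where $A + B = \lambda/2$. The crucial observation is that $C(u) = 1$ identically on $\mathcal{M}_\lambda$: with $\chi = \lambda/(4\pi)$, a short algebraic manipulation yields $B^2 \pm 8\pi\chi A = (B \mp 4\pi\chi)^2$ on the manifold (upper signs for $\chi > 0$, lower for $\chi < 0$), and the branch of $C(u)$ extending the unique-solution regime, fixed in the convention after \eqref{c}, evaluates to $1$; the case $\chi = 0$ is immediate. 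Substituting into \eqref{fab}, the nonlinear term collapses: for $\chi > 0$ one obtains $\mathcal{F}_{\lambda/(4\pi)}(A,B) = 2B + 8\pi\chi\log(4\pi\chi)$, and analogously in the other regimes. Using that $\int_\Sigma u = 0$ kills the linear term in $\mathcal{I}_\lambda$, the difference $\mathcal{I}_\lambda - \mathcal{J}_\lambda$ reduces to $-2(A+B) + \mathrm{const}_\lambda = -\lambda + \mathrm{const}_\lambda =: c_\lambda$.

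For the Morse index bound, I would observe that $\mathcal{M}_\lambda \subset \overline H^1(\Sigma) \subset H^1(\Sigma)$, where each inclusion has codimension $1$ (the Gauss-Bonnet-type level set inside $\overline H^1$, and the mean-zero subspace inside $H^1$), so $T_u\mathcal{M}_\lambda$ has codimension $2$ in $H^1(\Sigma)$. Since $\mathcal{I}_\lambda$ and $\mathcal{J}_\lambda + c_\lambda$ coincide on $\mathcal{M}_\lambda$, their Hessians at $u$ agree when restricted to $T_u\mathcal{M}_\lambda$; call this common restricted Morse index $i_T$. A standard linear-algebra argument---if $Q$ is a quadratic form on a Hilbert space $H$ with a codimension-$k$ subspace $V$, then any maximal $Q$-negative subspace $E \subset H$ satisfies $\dim E - \dim(E \cap V) \le k$---yields $i_T \le \mathrm{ind}(u),\,\mathrm{ind}^*(u) \le i_T + 2$, and therefore $|\mathrm{ind}(u) - \mathrm{ind}^*(u)| \le 2$.

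The main obstacle is the branch analysis verifying $C(u) = 1$ on $\mathcal{M}_\lambda$ and the ensuing simplification of \eqref{fab} across all sign regimes, since the square roots for $\chi \neq 0$ introduce sign ambiguity depending on $A$ and $B$; the convention introduced just after \eqref{c}, which pins down the branch extending the uniqueness regime, is precisely what removes this ambiguity and makes the identity hold uniformly. Once the functional identity on $\mathcal{M}_\lambda$ is in hand, the Morse index estimate is a routine consequence of the codimension-$2$ inclusion.
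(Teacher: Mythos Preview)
Your proposal is correct and follows essentially the same approach as the paper: a direct computation of both functionals on $\mathcal M_\lambda$ to extract the constant $c_\lambda$, followed by the codimension-$2$ argument for the Morse index bound. The only cosmetic difference is that you route the simplification of $\mathcal F_{\lambda/(4\pi)}$ through the observation $C(u)=1$ on $\mathcal M_\lambda$, whereas the paper substitutes the constraint $A+B=\lambda/2$ directly into $\sqrt{B^2+2\lambda A}+B=\lambda$ and reads off $\mathcal J_\lambda|_{\mathcal M_\lambda}=\frac12\int_\Sigma|\nabla u|^2-2\lambda\log\lambda-2B$; both computations yield $c_\lambda=2\lambda\log\lambda-\lambda$.
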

\begin{proof}
After simple computations one see that
$$\mathcal I_\lambda(u)|_{\mathcal M_\lambda}=\frac12\int_\Sigma|\nabla u|^2-2\int_{\partial\Sigma}\widetilde he^{u/2}-\lambda,$$
and 
\begin{eqnarray*}
\mathcal J_\lambda(u)|_{\mathcal M_\lambda}&=&\frac12\int_\Sigma|\nabla u|^2-2\lambda\left(\log\lambda+\frac{\int_{\partial\Sigma}\widetilde he^{u/2}}\lambda\right)\\
&=&\frac12\int_\Sigma|\nabla u|^2-2\lambda\log\lambda-2\int_{\partial\Sigma}\widetilde he^{u/2},
\end{eqnarray*}
therefore $\mathcal I_\lambda|_{\mathcal M_\lambda}=\mathcal J_\lambda|_{\mathcal M_\lambda}+2\lambda\log\lambda-\lambda$.\\
To deal with the Morse indices, at each point $u\in\mathcal M_\lambda$ we split $H^1(\Sigma)=T_u\mathcal M_\lambda\oplus N_u\mathcal M_\lambda$, where $T_u\mathcal M_\lambda$ is the tangent space of $\mathcal M_\lambda$ at $u$ and $N_u\mathcal M_\lambda$ is its normal space. Since $\mathcal I_\lambda$ and $\mathcal J_\lambda$ coincide on $\mathcal M_\lambda$, up to the constant $c_\lambda$, the tangent spaces will coincide, therefore:
$$\mathcal J''_\lambda(u)=\mathcal J''_\lambda(u)|_{T_u\mathcal M_\lambda}+\mathcal J''_\lambda(u)|_{N_u\mathcal M_\lambda}=\mathcal I''_\lambda(u)|_{T_u\mathcal M_\lambda}+\mathcal J''_\lambda(u)|_{N_u\mathcal M_\lambda}.$$
On the other hand, since $\mathcal M_\lambda$ has co-dimension 2 in $H^1(\Sigma)$ and co-dimension 1 in $\overline H^1(\Sigma)$, then $\mathcal J''_\lambda(u)|_{N_u\mathcal M_\lambda}$ is a two-dimensional quadratic form, which furthermore vanishes in the direction of constants function, due to the invariance of $\mathcal J$. Therefore the number of directions where the quadratic form associated to $\mathcal J_\lambda$ and $\mathcal I_\lambda$ have different sign, that is the difference between the Morse indices, can be at most 1.
\end{proof}\

\section{Energy-minimizing solutions in the case of positive singular Euler characteristic}\label{chipos}

In this section we study the case of $\chi>0$, where we will show that the energy functional \eqref{j} is coercive, hence we will get energy-minimizing solutions. We will divide our discussion into two parts, the first one devoted to the subcritical case and the second one to the critical and supercritical cases, for which we will need an additional symmetric hypothesis.

\subsection{Subcritical case}

\

In the study of the geometry of the energy functional \eqref{j} the logarithmic term plays a crucial role, since the other term in the definition \eqref{fab} of $F(A,B)$ is easily seen to be bounded from above by $1$, namely
$$\frac B{\sqrt{B^2+8\pi\chi A}+B}\le1.$$
This means that the analysis of the energy functional will be based on the study of the logarithmic term in detail. To this purpose we will follow the approach proposed in \cite{bls2}, based on suitable Trudinger-Moser type inequalities. In our case, such inequalities will take account of the singularities in the problem, which are encoded in the Trudinger constant $\tau$ defined in \eqref{tau}.
\begin{proposition}\label{tm}
For any $\varepsilon>0$ there exists $C=C_\varepsilon>0$ such that for any $u\in\overline H^1(\Sigma)$ there holds:
$$\log{\left(\sqrt{\left(\int_{\partial\Sigma}\widetilde he^{u/2}\right)^2+8\pi\chi\int_\Sigma\widetilde Ke^u}+\int_{\partial\Sigma}\widetilde he^{u/2}\right)}\le\frac{1+\varepsilon}{16\tau\pi}\int_\Sigma|\nabla u|^2+C,$$
with $\tau$ as in \eqref{tau}
\end{proposition}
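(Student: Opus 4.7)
The plan is to reduce the estimate to two singular Moser-Trudinger inequalities, one for the interior integral $A=\int_\Sigma\widetilde K e^u$ and one for the boundary integral $B=\int_{\partial\Sigma}\widetilde h e^{u/2}$. First I would verify, by a case split on the signs of $A$ and $B$, the elementary pointwise estimate
$$\sqrt{B^2+8\pi\chi A}+B\le 2B_++\sqrt{8\pi\chi A_+},$$
which follows from the subadditivity $\sqrt{x+y}\le\sqrt x+\sqrt y$ whenever the left-hand side is real and nonnegative. Taking logarithms and using $\log(a+b)\le\log 2+\max\{\log a,\log b\}$ for $a,b>0$, the proposition reduces to the two separate estimates
$$\log\int_\Sigma\widetilde K_+e^u\le\frac{1+\varepsilon}{8\pi\tau}\int_\Sigma|\nabla u|^2+C,\qquad\log\int_{\partial\Sigma}\widetilde h_+e^{u/2}\le\frac{1+\varepsilon}{16\pi\tau}\int_\Sigma|\nabla u|^2+C,$$
for every $u\in\overline H^1(\Sigma)$.

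To establish both estimates I would use \eqref{kh} together with the logarithmic singularities of $G_{p_i}$ and $G_{q_j}$ to bound $\widetilde K\le C\prod_i d(\cdot,p_i)^{2\alpha_i}\prod_j d(\cdot,q_j)^{2\beta_j}$ on $\Sigma$ and $\widetilde h\le C\prod_j d(\cdot,q_j)^{\beta_j}$ on $\partial\Sigma$, reducing matters to weighted singular Moser-Trudinger inequalities. For the interior estimate I would pass to the closed double of $\Sigma$ across $\partial\Sigma$: each interior conical point $p_i$ lifts to two conical points of order $\alpha_i$, each corner $q_j$ lifts to a single interior conical point of order $\beta_j$, and the even extension of $u$ still has zero mean with $\int_{\mathrm{double}}|\nabla\cdot|^2=2\int_\Sigma|\nabla u|^2$. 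Troyanov's singular Moser-Trudinger inequality on the double, combined with the factor-of-two gain available for functions symmetric under the reflection whenever blow-up happens at a non-fixed point, produces precisely the coefficient $\frac{1}{8\pi\tau}$ once pulled back to $\Sigma$. The boundary estimate is analogous but simpler, as the interior conical points do not lie on $\partial\Sigma$, and the sharp coefficient is $\frac{1}{16\pi\min\{1,1+\min_j\beta_j\}}\le\frac{1}{16\pi\tau}$.

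The main technical point is the identification of the sharp constant, which is precisely the reason why $\tau$ is defined as in \eqref{tau}. At a smooth boundary point the pulled-back coefficient is $\frac{1}{8\pi}$, as in the regular case treated in \cite{bls2}. At an interior conical point $p_i$ of order $\alpha_i$ the $\mathbb Z_2$-symmetry gain on the double halves Troyanov's constant, yielding $\frac{1}{16\pi(1+\alpha_i)}$ on $\Sigma$ and producing the factor $2+2\min\alpha_i=2(1+\min\alpha_i)$ in $\tau$. At a corner $q_j$ of order $\beta_j$ no symmetry improvement is available, since $q_j$ is a fixed point of the reflection and becomes a single conical point on the double, giving $\frac{1}{8\pi(1+\beta_j)}$ on $\Sigma$ and the factor $1+\min\beta_j$ in $\tau$. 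The overall sharp constant is the maximum
$$\max\left\{\frac{1}{8\pi},\,\frac{1}{16\pi(1+\min_i\alpha_i)},\,\frac{1}{8\pi(1+\min_j\beta_j)}\right\}=\frac{1}{8\pi\tau},$$
which matches the required coefficient; combining this with the algebraic reduction of the first paragraph completes the proof.
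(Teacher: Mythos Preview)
Your algebraic reduction to two separate Trudinger--Moser estimates, one for $A=\int_\Sigma\widetilde K e^u$ and one for $B=\int_{\partial\Sigma}\widetilde he^{u/2}$, is exactly the paper's route: the paper uses the elementary inequality \eqref{ineq} in place of your $\sqrt{B^2+8\pi\chi A}+B\le2B_++\sqrt{8\pi\chi A_+}$, and the two estimates you reduce to are precisely \eqref{tmint} and Lemma~\ref{tmbdry}.

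The genuine divergence is in how the interior estimate \eqref{tmint} is obtained. The paper works directly on $\Sigma$: it covers $\overline\Sigma$ by small isothermal balls, pulls back to a Euclidean ball (with even reflection at boundary points, which is where the factor~$2$ in \eqref{equiv2} comes from), applies the Adimurthi--Sandeep singular inequality in each ball, and then globalizes via a Fourier splitting $u=v+w$ into low and high modes combined with a cut-off. Your proposal---pass to the closed double, apply Troyanov's singular inequality there, and upgrade the constant using the $\mathbb Z_2$-symmetry of the even extension---is a legitimate alternative that packages the same dichotomy (fixed versus non-fixed points of the reflection) more geometrically; it makes transparent why the three entries $1$, $2+2\min_i\alpha_i$, $1+\min_j\beta_j$ of \eqref{tau} appear, corresponding respectively to smooth boundary points, interior conical points, and corners. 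The cost is that the symmetric improvement on the double is itself a nontrivial ingredient, essentially the closed-surface analogue of Corollary~\ref{tmimprint}, so the total work is comparable. For the boundary term the paper is more direct than your sketch suggests: it quotes the regular boundary inequality \eqref{liliu} from \cite{cr} and, when $\tau<1$, combines it with H\"older using that $\widetilde h\in L^p(\partial\Sigma)$ for $p<\frac1{1-\tau}$.
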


The following two lemmas are essential to get control on domains with conical singularities or corners. The proof of Proposition \ref{tm}, at the end of this sub-section, is an easy consequence of these lemmas.\\
The first lemma is based on a Euclidean singular Trudinger-Moser inequality from \cite{as} and a covering argument.
\begin{lemma}
For any $\varepsilon>0$ there exists a constant $C\in\mathbb R$ such that 
\begin{equation}\label{tmint}
\log\int_\Sigma\widetilde Ke^u\le\frac{1+\varepsilon}{8\tau\pi}\int_\Sigma|\nabla u|^2+C,
\end{equation}
for all $u\in\overline H^1(\Sigma)$.
\end{lemma}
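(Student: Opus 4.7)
The plan is a cover-and-localize argument that reduces the inequality on $(\Sigma,g_0)$ to Euclidean singular Trudinger--Moser estimates, applied chart by chart, with $\widetilde K$ matched against the Euclidean weight $|x|^{2\gamma}$ via its local behavior recorded after \eqref{kh}. The building block is the Adimurthi--Sandeep inequality from \cite{as}: for $\gamma>-1$ and $v\in H^1_0(B_R)$,
$$\log\int_{B_R}|x|^{2\gamma}e^v\le\frac{1}{16\pi\min\{1,1+\gamma\}}\int_{B_R}|\nabla v|^2+C,$$
together with its half-ball analogue
$$\log\int_{B_R^+}|x|^{2\gamma}e^v\le\frac{1}{8\pi\min\{1,1+\gamma\}}\int_{B_R^+}|\nabla v|^2+C,$$
valid for $v\in H^1(B_R^+)$ vanishing on $\partial B_R\cap\overline{B_R^+}$, which follows from the ball version by even reflection across the flat part of the boundary (at the cost of doubling the sharp constant).

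First I would cover $\overline\Sigma$ by a finite atlas of conformal coordinate patches $\{(U_\ell,\Xi_\ell)\}_{\ell=1}^N$ in which every conical singularity $p_i$ is the center of one interior (ball-type) chart, with $\widetilde K(x)\le C|\Xi_\ell(x)|^{2\alpha_i}$; every corner $q_j$ is the center of one boundary (half-ball-type) chart, with $\widetilde K(x)\le C|\Xi_\ell(x)|^{2\beta_j}$; on the remaining charts $\widetilde K$ is bounded; and the conformal factor on every patch lies in $[1-\varepsilon,1+\varepsilon]$. Directly from the definition \eqref{tau} of $\tau$, the worst local constant across these charts is exactly $\frac{1}{8\pi\tau}$: corner charts contribute $\frac{1}{8\pi\min\{1,1+\beta_j\}}$, conical charts contribute $\frac{1}{8\pi\cdot 2\min\{1,1+\alpha_i\}}$, and regular charts at most $\frac{1}{8\pi}$.

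Next I would take a measurable partition $\Sigma=\bigsqcup_\ell A_\ell$ with $\overline{A_\ell}\subset U_\ell$, cutoffs $\eta_\ell\in C^\infty_c(U_\ell)$ with $\eta_\ell\equiv 1$ on $A_\ell$, and denote by $\overline u_\ell$ the average of $u$ over $U_\ell$. Applying the local Euclidean inequality to $\eta_\ell(u-\overline u_\ell)\in H^1_0(U_\ell)$, then expanding $|\nabla(\eta_\ell(u-\overline u_\ell))|^2$ via Young's inequality, and bounding the residual $\int_{U_\ell}(u-\overline u_\ell)^2|\nabla\eta_\ell|^2$ by Poincar\'e on $U_\ell$, one reaches
$$\log\int_{A_\ell}\widetilde K e^u\le\overline u_\ell+\frac{1+\varepsilon}{8\pi\tau}\int_{U_\ell}|\nabla u|^2+C_\varepsilon.$$
The clean factor $1+\varepsilon$ on the leading term is achieved by choosing the atlas sufficiently fine so that the Poincar\'e/cutoff error is absorbed into $C_\varepsilon$; passing from $\sum_\ell$ to $\max_\ell$ via $\log(\sum a_\ell)\le\log N+\max_\ell\log a_\ell$ ensures that overlap multiplicities do not enter the leading coefficient, and the $\overline u_\ell$ terms are controlled by the global Poincar\'e inequality on $\Sigma$, available since $\overline u=0$.

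The hard part will be the bookkeeping of the $\varepsilon$'s: because $\frac{1}{8\pi\tau}$ is sharp locally, every reduction step---flattening the metric, replacing $\widetilde K$ by the pure weight $|x|^{2\gamma}$, multiplying by $\eta_\ell$, and subtracting $\overline u_\ell$---must contribute only $(1+O(\varepsilon))$ to the leading coefficient. In the singular setting this is more delicate than in the regular case of \cite{bls2}, since the atlas must simultaneously isolate each conical singularity and each corner in its own chart and match its local exponent with the Euclidean weight in \cite{as}, so that only then do all the different exponents aggregate into the single invariant $\tau$ of \eqref{tau}.
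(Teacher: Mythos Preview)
Your overall architecture (cover, localize via Adimurthi--Sandeep, reflect boundary charts) matches the paper's, but there is a genuine gap at the step where you claim the cutoff error can be ``absorbed into $C_\varepsilon$'' by refining the atlas. Expanding $|\nabla(\eta_\ell(u-\overline u_\ell))|^2$ via Young leaves the term $(1+\tfrac{1}{\varepsilon})\int_{U_\ell}(u-\overline u_\ell)^2|\nabla\eta_\ell|^2$, and local Poincar\'e bounds this by a fixed multiple of $\int_{U_\ell}|\nabla u|^2$, not by a constant. Refining the atlas does not help: if the chart has diameter $r$, the Poincar\'e constant scales like $r^2$ while $|\nabla\eta_\ell|^2$ scales like $r^{-2}$, so the product is scale-invariant. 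You therefore end up with $\big(1+\varepsilon+\tfrac{C}{\varepsilon}\big)\int|\nabla u|^2$ inside the leading coefficient, which cannot be made $(1+\varepsilon)$.

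The paper circumvents exactly this obstruction by a spectral decomposition $u=v+w$, where $v$ is the projection of $u$ onto the span of the first $N$ Neumann eigenfunctions and $w$ the orthogonal remainder. The low-frequency part $v$ lies in a finite-dimensional space, so $\|v\|_{L^\infty}\le C_N\|\nabla v\|_{L^2}\le \varepsilon\int_\Sigma|\nabla u|^2+C_{N,\varepsilon}$; it is removed from the exponential and contributes only to the additive constant. The high-frequency part $w$ satisfies the improved Poincar\'e bound $\int_\Sigma w^2\le\varsigma_{N+1}^{-1}\int_\Sigma|\nabla w|^2$, and since $N$ is chosen after the cutoff scale $\delta$ is fixed, $\varsigma_{N+1}$ can be taken large enough that the cutoff error on $\zeta w$ is genuinely $O(\varepsilon)\int_\Sigma|\nabla u|^2$. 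This decoupling of scales is the missing ingredient in your outline; without it the argument cannot reach the sharp constant $\tfrac{1}{8\pi\tau}$ with only an $\varepsilon$-loss.
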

\begin{proof}
For any $x\in\overline\Sigma$ we take $\delta(x)>0$ such that one has isothermal coordinates $\Xi:B_{2\delta(x)}(x)\to B_1(0)\subset\mathbb R^2$ if $x\in\Sigma$ or $\Xi:B_{2\delta(x)}(x)\to B^+_1(0)$ if $x\in\partial\Sigma$. In the former case, for any $u\in H^1_0(B_{2\delta(x)}(x))$ we have $\widetilde u:=u\circ\Xi^{-1}\in H^1_0(B_1(0))$ and
\begin{equation}\label{equiv}
\int_{B_{2\delta(x)}(x)}\widetilde Ke^u\le C\int_{B_1(0)}\left(\widetilde K\circ\Xi^{-1}\right)e^{\widetilde u},\quad\int_{B_{2\delta(x)}(x)}|\nabla u|^2=\int_{B_1(0)}|\nabla\widetilde u|^2,
\end{equation}
latter being a consequence of conformal invariance. On the other hand, if $x\in\partial\Sigma$ and $u\in H^1(B_{2\delta(x)}(x))$ satisfies $u\equiv0$ on $\partial B_{2\delta(x)}(x)\cap\Sigma$, we set
$$\widetilde u(s,t):=\left\{\begin{array}{ll}u\left(\Xi^{-1}(s,t)\right)&\mbox{if }(s,t)\in B^+_1(0)\\u\left(\Xi^{-1}(s,-t)\right)&\mbox{otherwise}\end{array}\right.:$$
as before, we have $\widetilde u\in H^1_0(B_1(0))$, but \eqref{equiv} is replaced with
\begin{equation}\label{equiv2}
\int_{B_{2\delta(x)}(x)}\widetilde Ke^u\le C\int_{B_1(0)}\left(\widetilde K\circ\Xi^{-1}\right)e^{\widetilde u},\quad\int_{B_{2\delta(x)}(x)}|\nabla u|^2=\frac12\int_{B_1(0)}|\nabla\widetilde u|^2.
\end{equation}
It is also not restrictive to assume $\delta(x)$ so small that $B_{2\delta(x)}$ does not include any singular point, except possibly $x$; this allows to apply the singular Trudinger-Moser inequality from Adimurthi and Sandeep (Theorem 2.1 in \cite{as}) and get, if $u\not\equiv0$:
\begin{eqnarray}
\nonumber&&\log\int_{B_1(0)}\left(\widetilde K\circ\Xi^{-1}\right)e^{\widetilde u}\\
\nonumber&\le&\log\int_{B_1(0)}\frac{e^{\widetilde u}}{|x|^{2\alpha(x)_-}}\mathrm dx+C\\
\nonumber&\le&\log\int_{B_1(0)}\frac{e^{\frac{4(1+\alpha(x)_-\pi)\widetilde u^2}{\int_{B_1(0)}|\nabla\widetilde u|^2}}}{|x|^{2\alpha(x)_-}}\mathrm dx+\frac1{16(1+\alpha(x)_-)\pi}\int_{B_1(0)}|\nabla\widetilde u|^2+C\\
\label{tmsing}&\le&\frac1{16\tau\pi}\int_{B_1(0)}|\nabla\widetilde u|^2+C,
\end{eqnarray}
where 
\begin{equation}\label{alpha}
\alpha(x):=\left\{\begin{array}{ll}\alpha_i&x=p_i\\0&x\notin\{p_1,\dots,p_n\}\end{array}\right..
\end{equation}
We now cover $\overline\Sigma$ with the open balls $\{B_{\delta(x)}(x)\}_{x\in\overline\Sigma}$ and, thanks to compactness, $\overline\Sigma\subset\bigcup_{i=1}^NB_{\delta(x_i)}(x_i)$. We then fix $u\in\overline H^1(\Sigma)$ and, up to re-ordering the points, we will have
$$\int_{B_\delta(x_1)}\widetilde Ke^u\ge\frac1N\int_\Sigma\widetilde Ke^u,\qquad\delta:=\delta(x_1).$$
We decompose $u$ as a Fourier series with respect to an orthonormal frame of eigenfunctions $\varphi_n$ for $-\Delta$ on $\overline H^1(\Sigma)$ with associated eigenvalues $0<\varsigma_n\underset{n\to\infty}\nearrow\infty$ and we split
$$u=\sum_{n=1}^\infty u_n\varphi_n=\underbrace{\sum_{n=1}^Nu_n\varphi_n}_{=:v}+\underbrace{\sum_{n=N}^\infty u_n\varphi_n}_{=:w},$$
with $N=N_{\varepsilon,\delta}$ to be chosen later. Since $v$ belongs to an $N$-dimensional space, we get
\begin{equation}\label{v}
\|v\|_{L^\infty(\Sigma)}\le C\sqrt{\int_\Sigma|\nabla v|^2}\le\frac{C^2}{4\varepsilon}+\varepsilon\int_\Sigma|\nabla v|^2\le\frac{C^2}{4\varepsilon}+\varepsilon\int_\Sigma|\nabla u|^2,
\end{equation}
with $C$ depending on $N$, hence only on $\varepsilon,\delta$; on the other hand, by the choice of $N$ we get
$$\int_\Sigma w^2\le\frac1{\varsigma_{N+1}}\int_\Sigma|\nabla w|^2\le\frac1{\varsigma_{N+1}}\int_\Sigma|\nabla u|^2.$$
Take now a smooth cut-off $\zeta$ such that
\begin{equation}\label{zeta}
\zeta\in\mathcal C^\infty_0(B_{2\delta}(x)),\qquad\zeta\equiv1\mbox{ on }B_\delta(x),\qquad|\nabla\zeta|\le C_\delta,
\end{equation}
so that $\zeta w$ vanishes on $\partial B_\delta(x_1)$ and we can apply \eqref{equiv}, \eqref{equiv2} and \eqref{tmsing} to $\widetilde{\zeta w}\in H^1_0(B_1(0))$; we either get
$$\log\int_{B_{2\delta}(x_1)}\widetilde Ke^{\zeta w}\le\frac1{16\tau\pi}\int_{B_{2\delta}(x_1)}|\nabla(\zeta w)|^2+C$$
or
\begin{equation}\label{zetaw}
\log\int_{B_{2\delta}(x_1)}\widetilde Ke^{\zeta w}\le\frac1{8\tau\pi}\int_{B_{2\delta}(x_1)}|\nabla(\zeta w)|^2+C,
\end{equation}
the latter clearly holding true in both cases.\\
Therefore, by choosing
$$N=N_{\varepsilon,\delta}:=\max\left\{n\in\mathbb N:\,\varsigma_n<\frac{1+\varepsilon}{\varepsilon^2}C_\delta\right\}$$
with $C_\delta$ as in \eqref{zeta}, we get:
\begin{eqnarray*}
\int_{B_{2\delta}(x_1)}|\nabla(\zeta w)|^2&=&\int_\Sigma\left(\zeta^2|\nabla w|^2+2\zeta w\nabla w\cdot\nabla\zeta+w^2|\nabla\zeta|^2\right)\\
&\le&\int_\Sigma\left((1+\varepsilon)\zeta^2|\nabla w|^2+\left(1+\frac1\varepsilon\right)|\nabla\zeta|^2w^2\right)\\
&\le&(1+\varepsilon)\int_\Sigma|\nabla w|^2+\left(1+\frac1\varepsilon\right)C_\delta\int_\Sigma w^2\\
&\le&(1+\varepsilon)\int_\Sigma|\nabla w|^2+\varepsilon\int_\Sigma|\nabla w|^2\\
&\le&(1+2\varepsilon)\int_\Sigma|\nabla u|^2.
\end{eqnarray*}
Together with \eqref{v} and \eqref{zetaw}, this gets to the conclusion:
\begin{eqnarray*}
\log\int_\Sigma\widetilde Ke^u&\le&\log\int_{B_\delta(x_1)}\widetilde Ke^u+\log N\\
&\le&\log\int_{B_{2\delta}(x_1)}\widetilde Ke^{\zeta w}+\|v\|_{L^\infty(\Sigma)}+\log N\\
&\le&\frac1{8\tau\pi}\int_{B_{2\delta}(x_1)}|\nabla(\zeta w)|^2+\varepsilon\int_\Sigma|\nabla u|^2+C\\
&\le&\frac{1+2\varepsilon}{8\tau\pi}\int_\Sigma|\nabla u|^2+\varepsilon\int_\Sigma|\nabla u|^2+C\\
&=&\frac{1+\varepsilon'}{8\tau\pi}\int_\Sigma|\nabla u|^2+C.
\end{eqnarray*}
\end{proof}

To deal with boundary integral term we have the following result, in the same spirit as \cite{tr,ll}.
\begin{lemma}\label{tmbdry}
For any $\varepsilon>0$ there exists a constant $C=C_\varepsilon\in\mathbb R$ such that 
$$\log\left|\int_{\partial\Sigma}\widetilde he^{u/2}\right|\le\frac{1+\varepsilon}{16\tau\pi}\int_\Sigma|\nabla u|^2+C,$$
for all $u\in\overline H^1(\Sigma)$.
\end{lemma}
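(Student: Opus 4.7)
The plan is to adapt the proof of the preceding lemma almost verbatim, using a boundary singular Moser--Trudinger inequality in place of the interior one \eqref{tmsing}. Bounding $\left|\int_{\partial\Sigma}\widetilde he^{u/2}\right|$ by $\int_{\partial\Sigma}|\widetilde h|e^{u/2}$, I first cover $\partial\Sigma$ by finitely many isothermal half-disk charts $\Xi:B_{2\delta(x)}(x)\to B_1^+(0)$ with $x\in\partial\Sigma$, shrinking $\delta(x)$ so that each $B_{2\delta(x)}(x)$ contains no singular point other than possibly $x$ itself, which, if present, must then be a corner $q_j$ of order $\beta_j$; compactness of $\partial\Sigma$ yields a finite subcover $\{B_{\delta(x_i)}(x_i)\}_{i=1}^N$. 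Since $\Xi$ is conformal, the Dirichlet integral is preserved on $B_{2\delta(x)}(x)\cap\Sigma\leftrightarrow B_1^+(0)$, and by \eqref{kh} one has $|\widetilde h(\Xi^{-1}(s,0))|\le C|s|^{-\beta(x)_-}$ locally, where $\beta(x)=\beta_j$ if $x=q_j$ and $\beta(x)=0$ otherwise.

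The key new ingredient is the singular boundary Moser--Trudinger inequality: for every $\widetilde u\in H^1(B_1^+(0))$ vanishing on the upper semicircle,
\begin{equation*}
\log\int_{\partial B_1^+(0)\cap\mathbb R}\frac{e^{\widetilde u/2}}{|s|^{\beta(x)_-}}\,\mathrm ds\le\frac{1}{16(1+\beta(x)_-)\pi}\int_{B_1^+(0)}|\nabla\widetilde u|^2\,\mathrm dx+C.
\end{equation*}
The non-singular case $\beta(x)=0$ is the sharp Li--Liu boundary Moser--Trudinger inequality \cite{ll} in the form adapted to $e^{\widetilde u/2}$, while the weighted version follows from the same argument as in Adimurthi--Sandeep \cite{as}, e.g.\ via Schwarz symmetrization or via a reflection reduction to the interior singular Moser--Trudinger inequality already used in the previous lemma. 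Since $1+\beta(x)_-\ge\tau$ by \eqref{tau}, the right-hand side is bounded above by $\frac1{16\tau\pi}\int_{B_1^+(0)}|\nabla\widetilde u|^2+C$.

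With this local estimate in hand, the rest of the proof is a verbatim reprise of the preceding argument. I pick $x_1\in\partial\Sigma$ so that $\int_{\partial\Sigma\cap B_{\delta(x_1)}(x_1)}|\widetilde h|e^{u/2}\ge\frac1N\int_{\partial\Sigma}|\widetilde h|e^{u/2}$, decompose $u=v+w$ with $v$ spanned by the first $N_{\varepsilon,\delta}$ Laplace eigenfunctions on $\overline H^1(\Sigma)$, bound $\|v\|_{L^\infty(\Sigma)}$ via \eqref{v}, introduce the cut-off $\zeta$ from \eqref{zeta} so that $\widetilde{\zeta w}$ vanishes on the upper semicircle and is admissible in the local inequality, and control $\int|\nabla(\zeta w)|^2$ by $(1+2\varepsilon)\int_\Sigma|\nabla u|^2$ as in \eqref{zetaw}. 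Chaining these estimates gives $\log\left|\int_{\partial\Sigma}\widetilde he^{u/2}\right|\le\tfrac{1+2\varepsilon}{16\tau\pi}\int_\Sigma|\nabla u|^2+\varepsilon\int_\Sigma|\nabla u|^2+C$, which yields the claim after relabelling $\varepsilon$. The main obstacle is the second paragraph: establishing the sharp constant $\tfrac1{16(1+\beta_-)\pi}$ in the singular boundary Moser--Trudinger inequality. The halving of the coefficient compared with the interior constant $\tfrac1{8\tau\pi}$ reflects the use of $e^{u/2}$ on the boundary in place of $e^u$ in the bulk, matching the boundary/interior ratio of sharp Moser--Trudinger constants on surfaces.
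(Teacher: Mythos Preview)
Your approach is quite different from the paper's, and more elaborate than necessary. The paper does \emph{not} repeat the covering/eigenfunction-splitting machinery of the preceding lemma here; instead it gives a two-line argument. When $\tau=1$ the weight $\widetilde h$ is bounded and the non-singular global boundary Moser--Trudinger inequality \eqref{liliu} from \cite{cr} applies directly. When $\tau<1$, one observes that $\widetilde h\in L^p(\partial\Sigma)$ for any $p<\frac{1}{1-\tau}$, applies H\"older, and then applies \eqref{liliu} to the rescaled function $\frac{1+\varepsilon}{\tau}u$; the constant $\frac{1+\varepsilon}{16\tau\pi}$ drops out immediately. No local charts, no cut-offs, no spectral decomposition.

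There is also a genuine gap in your key step. The displayed singular boundary Moser--Trudinger inequality is not in the references you cite, and the constant $\frac{1}{16(1+\beta(x)_-)\pi}$ cannot be right: the singular weight $|s|^{-\beta(x)_-}$ makes the boundary integral \emph{larger}, so the inequality must become \emph{weaker}, i.e.\ the denominator should be $1-\beta(x)_-$, not $1+\beta(x)_-$ (you may have been misled by a sign slip in the paper's display \eqref{tmsing}). With the corrected constant one does have $1-\beta(x)_-=\min\{1,1+\beta_j\}\ge\tau$, and the rest of your scheme would go through. However, your suggested justifications do not work as stated: Schwarz symmetrization is not adapted to a trace integral, and even reflection to the full disk only gives the bulk Adimurthi--Sandeep inequality, not a boundary one. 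The cleanest way to obtain your local estimate (with the $\varepsilon$-loss that is all the lemma needs) is precisely H\"older plus the non-singular boundary inequality --- which is the paper's entire proof, rendering the localization superfluous.
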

\begin{proof}
If $\tau=1$, then $\widetilde h$ is bounded, therefore we just suffice to apply Corollary 2.6 from \cite{cr}, which says that
\begin{equation}\label{liliu}
\log\int_{\partial\Sigma}e^{u/2}\le\frac1{16\pi}\int_\Sigma|\nabla u|^2+C.
\end{equation}
If $\tau<1$, then $h\in L^p(\partial\Sigma)$ for $p=\frac{1+\varepsilon}{1+\varepsilon-\tau}<\frac1{1-\tau}$, therefore applying H\"older's inequality and then \eqref{liliu} to $\frac{1+\varepsilon}\tau u$ we get
\begin{eqnarray*}
\log\left|\int_{\partial\Sigma}\widetilde he^{u/2}\right|&\le&\frac{1+\varepsilon-\tau}{1+\varepsilon}\log\int_{\partial\Sigma}\left|\widetilde h\right|^\frac{1+\varepsilon}{1+\varepsilon-\tau}+\frac\tau{1+\varepsilon}\log\int_{\partial\Sigma}e^{(1+\varepsilon)/(2\tau)u}\\
&\le&\frac{1+\varepsilon}{16\tau\pi}\int_\Sigma|\nabla u|^2+C.
\end{eqnarray*}
\end{proof}

\begin{proof}[Proof of Proposition \ref{tm}]
We consider the elementary inequality
\begin{equation}\label{ineq}
\sqrt{B^2+8\pi A}+B\le\max\left\{\sqrt A,|B|\right\}\left(\sqrt{1+8\pi\chi}+1\right),\quad\mbox{for all }A>0,B\in\mathbb R,
\end{equation}
with
$$A=\int_\Sigma\widetilde Ke^u,\qquad B=\int_{\partial\Sigma}\widetilde he^{u/2}.$$
Therefore
$$\log{\left(\sqrt{B^2+8\pi\chi A}+B\right)}\le\max\left\{\frac12\log A,\log|B|\right\}+C\le\left(\frac1{16\tau\pi}+\varepsilon\right)\int_\Sigma|\nabla u|^2+C.$$
\end{proof}
In the case when $\alpha_i$'s and $\beta_j$'s are all non-negative, that is when $\widetilde h,\widetilde K\in L^\infty$ and $\tau=1$, Lemmas \ref{tmint} and \ref{tmbdry} are well-known to hold true, even with $\varepsilon=0$, respectively from \cite{cy} (Corollary 2.5) and \cite{cr} (Corollary 2.6). We believe that both lemmas, hence Proposition \ref{tm}, still hold true with $\varepsilon=0$, although the proof should be much more involved.

\subsection{Critical and supercritical cases with symmetries}\label{min}

\

In the critical and supercritical cases, using suitable test functions one can see that the energy $\mathcal J$ is not coercive on $\overline H^1(\Sigma)$. To bypass this issue we exploit the symmetry assumption \eqref{sym}, which gives and improved version of Proposition \ref{tm}.\\
We start by giving some \emph{localized} versions of the Trudinger-Moser inequalities, see also \cite{bjwy,cl2,cr}.

\begin{proposition}\label{tmloc}
For every $\delta,\varepsilon>0$ there exists a constant $C=C_{\varepsilon,\delta}>0$ such that for any open $\Omega\subset\Sigma$ and $u\in\overline H^1(\Sigma)$ one has
$$8\tau\pi\log\int_{\Omega}\widetilde Ke^u\le\int_{\Omega^\delta}|\nabla u|^2+\varepsilon\int_\Sigma|\nabla u|^2+C.$$
If $\Omega^\delta\cap\partial\Sigma=\emptyset$ for some $\delta>0$ and $\Omega^\delta$ contains no conical singularity, then
$$16\pi\log\int_{\Omega}\widetilde Ke^u\le\int_{\Omega^\delta}|\nabla u|^2+\varepsilon\int_\Sigma|\nabla u|^2+C:$$
if $\Omega^\delta\cap\partial\Sigma=\emptyset$ and $\Omega^\delta\cap\{p_1,\dots,p_n\}=p_i$, then
$$16\pi\min\{1,1+\alpha_i\}\log\int_{\Omega}\widetilde Ke^u\le\int_{\Omega^\delta}|\nabla u|^2+\varepsilon\int_\Sigma|\nabla u|^2+C.$$
\end{proposition}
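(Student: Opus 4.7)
The strategy mirrors the proof of Lemma \ref{tmint}, with the key difference that the cutoff is now supported in a ball contained in $\Omega^\delta$ rather than in a generic chart. Fix $\varepsilon,\delta>0$ and cover $\overline\Omega$ with finitely many balls $\{B_{\delta/2}(x_i)\}_{i=1}^M$ centered at points $x_i\in\overline\Omega$, with $M=M_\delta$ depending only on $\delta$ and $\Sigma$ by compactness. Up to relabeling,
$$\int_{B_{\delta/2}(x_1)}\widetilde Ke^u\ge\frac1M\int_\Omega\widetilde Ke^u,$$
so it suffices to estimate the left-hand side at the cost of an additive $\log M$. Since $x_1\in\overline\Omega$, the enlarged ball satisfies $B_\delta(x_1)\subset\Omega^\delta$, and I would fix a cutoff $\zeta$ as in \eqref{zeta} with $\zeta\equiv1$ on $B_{\delta/2}(x_1)$ and $\mathrm{supp}(\zeta)\subset B_\delta(x_1)$.

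Next I would decompose $u=v+w$ via the eigenfunction expansion of $-\Delta$ on $\overline H^1(\Sigma)$, with $v$ the projection onto the first $N_{\varepsilon,\delta}$ modes, chosen so that $\|v\|_{L^\infty(\Sigma)}\le\varepsilon\int_\Sigma|\nabla u|^2+C_\varepsilon$ and $C_\delta\int_\Sigma w^2\le\varepsilon\int_\Sigma|\nabla u|^2$, exactly as in \eqref{v} and the subsequent computation. Since $\zeta w$ is supported in $B_\delta(x_1)\subset\Omega^\delta$, expanding $|\nabla(\zeta w)|^2=\zeta^2|\nabla w|^2+2\zeta w\nabla w\cdot\nabla\zeta+w^2|\nabla\zeta|^2$ and applying these estimates gives
$$\int_\Sigma|\nabla(\zeta w)|^2\le(1+\varepsilon)\int_{\Omega^\delta}|\nabla u|^2+\varepsilon\int_\Sigma|\nabla u|^2.$$

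The final step is to pull $\zeta w$ back via an isothermal chart $\Xi$ centered at $x_1$, reflecting through the boundary as in \eqref{equiv2} if $x_1\in\partial\Sigma$, and apply the appropriate sharp Moser-Trudinger inequality to the resulting $H^1_0$-function on $B_1(0)$. The choice of inequality depends on the geometry of $B_\delta(x_1)\subset\Omega^\delta$. In the general case, $B_\delta(x_1)$ may contain a boundary segment, a corner, or a cone, and the singular inequality of Adimurthi-Sandeep \cite{as} (with the reflection trick when $x_1\in\partial\Sigma$) yields the worst-case coefficient $\frac1{8\tau\pi}$, with $\tau$ as in \eqref{tau}. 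If $\Omega^\delta\cap\partial\Sigma=\emptyset$ and $\Omega^\delta$ is free of conical singularities, $B_\delta(x_1)$ is an interior ball on which $\widetilde K$ is bounded, and the classical Moser-Trudinger inequality gives the sharper coefficient $\frac1{16\pi}$. If $\Omega^\delta\cap\partial\Sigma=\emptyset$ and $\Omega^\delta\cap\{p_1,\dots,p_n\}=\{p_i\}$, using $\widetilde K\le C\,d(\cdot,p_i)^{2\alpha_i}$ together with the singular Moser-Trudinger inequality (when $\alpha_i<0$) or the classical one (when $\alpha_i\ge0$) yields $\frac1{16\pi\min\{1,1+\alpha_i\}}$.

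The main technical point will be the uniform bookkeeping of the three cases together with the control of the cutoff error $\int_\Sigma w^2|\nabla\zeta|^2$ and the covering loss $\log M_\delta$; since $M_\delta$ depends only on $\delta$, this is handled by taking the mode cutoff $N_{\varepsilon,\delta}$ large enough that $C_\delta/\varsigma_{N_{\varepsilon,\delta}+1}<\varepsilon$, exactly as in Lemma \ref{tmint}. The outcome combines $\|v\|_{L^\infty}$ with the Moser-Trudinger bound on $\zeta w$ to give the stated inequality in each of the three cases.
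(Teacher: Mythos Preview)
There is a genuine gap in your localization step. The displayed inequality
\[
\int_\Sigma|\nabla(\zeta w)|^2\le(1+\varepsilon)\int_{\Omega^\delta}|\nabla u|^2+\varepsilon\int_\Sigma|\nabla u|^2
\]
does not follow from the eigenfunction split. Expanding gives $\int\zeta^2|\nabla w|^2+(\text{cutoff error})$; the cutoff error $\int w^2|\nabla\zeta|^2$ is indeed $\le\varepsilon\int_\Sigma|\nabla u|^2$ by your choice of $N$, but the main term is $\int_{B_\delta(x_1)}\zeta^2|\nabla w|^2\le\int_{\Omega^\delta}|\nabla w|^2$, which involves $\nabla w$, not $\nabla u$. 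The orthogonality $\int_\Sigma\nabla v\cdot\nabla w=0$ is \emph{global}; on the subregion $\Omega^\delta$ you only have
\[
\int_{\Omega^\delta}|\nabla w|^2\le(1+\varepsilon')\int_{\Omega^\delta}|\nabla u|^2+(1+1/\varepsilon')\int_{\Omega^\delta}|\nabla v|^2,
\]
and the last term can be as large as $\int_\Sigma|\nabla u|^2$ (take $u$ with a sizeable low-mode component whose gradient concentrates in $\Omega^\delta$). Your argument therefore only recovers the non-localized bound of Lemma~\ref{tmint}, not Proposition~\ref{tmloc}.

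The paper's route avoids this by applying \eqref{tmint} as a black box to $\zeta u$ rather than to $\zeta w$: then the leading term is $\int_{\Omega^\delta}\zeta^2|\nabla u|^2\le\int_{\Omega^\delta}|\nabla u|^2$ from the outset. The price is that the cutoff error becomes $\int u^2|\nabla\zeta|^2$, which a single Poincar\'e inequality bounds only by $C_\delta\int_\Sigma|\nabla u|^2$. The standard fix (in the cited \cite{lsr} and elsewhere) is to subtract from $u$ its average $b$ over a thin annulus $A\subset B_\delta(x_1)\setminus B_{\delta/2}(x_1)$ chosen by pigeonhole among $J\sim 1/\varepsilon$ concentric shells so that $\int_A|\nabla u|^2\le J^{-1}\int_{\Omega^\delta}|\nabla u|^2$; Poincar\'e on the thin $A$ then makes $\int(u-b)^2|\nabla\zeta|^2\le\varepsilon\int_{\Omega^\delta}|\nabla u|^2$, while $|b|\le C_{\varepsilon,\delta}\|\nabla u\|_{L^2(\Sigma)}\le\varepsilon\int_\Sigma|\nabla u|^2+C_{\varepsilon,\delta}$. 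Your covering reduction to a single ball and the three-case discussion of the sharp constant (interior regular, interior conical, boundary) are correct and match the paper; only the energy-localization step needs to be replaced by this cutoff-on-$u$ argument.
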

The details of the proof can be found in \cite[Proposition 2.2]{lsr} in a regular setting, but in the singular case a similar argument works. The main idea is to apply \eqref{tmint} to the function $u$ multiplied by a cut-off function on $\Omega$. An immediate consequence of the above result is the following corollary, where a concentration of the conformal volume gives a global control of the volume in terms of the Dirichlet energy.

\begin{corollary}\label{tmimprint}
Assume $\Omega_1,\dots,\Omega_k\subset\Sigma$ are open domains satisfying, for some $\delta>0$,
$$\Omega_i^\delta\cap\Omega_j^\delta=\emptyset\quad\forall i,j=1,\dots,k,i\ne j.$$
\begin{equation}\label{mass}
\frac{\int_{\Omega_i}\widetilde Ke^u}{\int_\Sigma\widetilde Ke^u}\ge\delta\quad\forall i=1,\dots,k.
\end{equation}
Then, for every $\varepsilon>0$ there exists a constant $C=C_{\varepsilon,\delta}>0$ such that for any $u\in\overline H^1(\Sigma)$ one has
$$\log\int_\Sigma\widetilde Ke^u\le\frac{1+\varepsilon}{8k\tau\pi}\int_\Sigma|\nabla u|^2+C,$$
\end{corollary}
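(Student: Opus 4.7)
The plan is to apply the localized Trudinger–Moser inequality from Proposition \ref{tmloc} on each domain $\Omega_i$ separately, exploit disjointness of the enlarged $\Omega_i^\delta$ to bound a sum of Dirichlet energies by the global one, and finally use the mass lower bound \eqref{mass} to convert the local $\log\int_{\Omega_i}$ quantities into the global $\log\int_\Sigma$.

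More precisely, first I apply Proposition \ref{tmloc} with parameter $\varepsilon/k$ (to absorb the factor $k$ introduced in the sum) to each $\Omega_i$, obtaining
\begin{equation*}
8\tau\pi\log\int_{\Omega_i}\widetilde K e^u\le\int_{\Omega_i^\delta}|\nabla u|^2+\frac{\varepsilon}{k}\int_\Sigma|\nabla u|^2+C,\qquad i=1,\dots,k,
\end{equation*}
where $C=C_{\varepsilon,\delta}$ depends only on $\varepsilon$ and $\delta$. Summing over $i$ and using that the sets $\Omega_i^\delta$ are pairwise disjoint, so $\sum_i\int_{\Omega_i^\delta}|\nabla u|^2\le\int_\Sigma|\nabla u|^2$, yields
\begin{equation*}
8\tau\pi\sum_{i=1}^k\log\int_{\Omega_i}\widetilde K e^u\le(1+\varepsilon)\int_\Sigma|\nabla u|^2+kC.
\end{equation*}

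Next I invoke the mass hypothesis \eqref{mass}: since $\int_{\Omega_i}\widetilde K e^u\ge\delta\int_\Sigma\widetilde K e^u$ for every $i$, taking logarithms gives $\log\int_\Sigma\widetilde K e^u\le\log\int_{\Omega_i}\widetilde K e^u-\log\delta$, and summing these $k$ inequalities produces
\begin{equation*}
k\log\int_\Sigma\widetilde K e^u\le\sum_{i=1}^k\log\int_{\Omega_i}\widetilde K e^u-k\log\delta.
\end{equation*}
Combining with the previous bound and dividing through by $8k\tau\pi$ gives exactly
\begin{equation*}
\log\int_\Sigma\widetilde K e^u\le\frac{1+\varepsilon}{8k\tau\pi}\int_\Sigma|\nabla u|^2+C',
\end{equation*}
with $C'=C'_{\varepsilon,\delta}$ absorbing the constants $C$ and $\log\delta$.

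There is essentially no serious obstacle here: all the work has been done in Proposition \ref{tmloc}, and the only care needed is in choosing the localization parameter in Proposition \ref{tmloc} as $\varepsilon/k$ rather than $\varepsilon$, so that after summing over the $k$ disjoint regions the error term on the Dirichlet energy remains of order $\varepsilon$. The mild point is that the mass assumption \eqref{mass} is used only to convert local into global conformal volume, so the sign of $\log\delta$ drops into the additive constant and does not affect the leading coefficient $1/(8k\tau\pi)$ of the gradient term, which is the actual content of the corollary.
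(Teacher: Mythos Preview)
Your proof is correct and follows essentially the same approach as the paper: apply Proposition \ref{tmloc} on each $\Omega_i$, use the mass bound \eqref{mass} to pass from local to global integrals, sum, and exploit disjointness of the $\Omega_i^\delta$. The only cosmetic differences are that the paper uses the mass bound before summing (inside each chain of inequalities) rather than after, and applies Proposition \ref{tmloc} with $\varepsilon$ and relabels $k\varepsilon$ at the end, whereas you preemptively use $\varepsilon/k$; both are equivalent.
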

\begin{proof}
Applying Proposition \ref{tmloc} to each $\Omega_i$ one has
$$8\tau\pi\log\int_\Sigma\widetilde Ke^u\le8\tau\pi\log\left(\frac1\delta\int_{\Omega_i}\widetilde Ke^u\right)\le\int_{\Omega_i^\delta}|\nabla u|^2+\varepsilon\int_\Sigma|\nabla u|^2+C_{\varepsilon,\delta}.$$
Since the $\Omega_i^\delta$'s are pairwise disjoint, then summing all of them we get:
$$8k\tau\pi\log\int_\Sigma\widetilde Ke^u\\
\le\int_{\bigcup_{i=0}^{k-1}\Omega_i^\delta}|\nabla u|^2+k\varepsilon\int_\Sigma|\nabla u|^2+C_{\varepsilon,\delta}\le(1+k\varepsilon)\int_\Sigma|\nabla u|^2+C.$$
\end{proof}
The same ideas can be used to give a localised version of the Lemma \ref{tmbdry} (see also Proposition 2.10 of \cite{cr}).
\begin{proposition}
For every $\varepsilon>0$ there exists a constant $C=C_\varepsilon$ such that for any $\Theta\subset\partial\Sigma$ and $u\in\overline H^1(\Sigma)$ one has
$$16\tau\pi\log\int_\Theta\widetilde he^{u/2}\le\int_{\Theta^\delta}|\nabla u|^2+\varepsilon\int_\Sigma|\nabla u|^2+C.$$
\end{proposition}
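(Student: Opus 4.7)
The plan is to adapt the localization strategy used for Proposition \ref{tmloc} and Corollary \ref{tmimprint} to the boundary setting, by cutting off $u$ outside $\Theta^\delta$ and then applying the (global) boundary Trudinger--Moser inequality Lemma \ref{tmbdry}. The key point is to reduce the multiplicative factor from $\frac{1+\varepsilon}{16\tau\pi}$ in Lemma \ref{tmbdry} to exactly $\frac{1}{16\tau\pi}$ in front of the \emph{localized} energy $\int_{\Theta^\delta}|\nabla u|^2$, paying only an extra $\varepsilon\int_\Sigma|\nabla u|^2$ on the global gradient.

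Concretely, I would fix a cut-off $\eta\in C^\infty(\overline\Sigma,[0,1])$ with $\eta\equiv 1$ on $\Theta$, $\mathrm{supp}(\eta)\subset\Theta^\delta$, and $|\nabla\eta|\le C_\delta$, then decompose $u=v+w$ using the orthonormal eigenframe $\{\varphi_n\}$ of $-\Delta$ on $\overline H^1(\Sigma)$, where $v=\sum_{n=1}^N u_n\varphi_n$ lies in a finite dimensional subspace (with $N=N_{\varepsilon,\delta}$ to be fixed) and $w$ is the high-frequency remainder. Exactly as in \eqref{v}, one gets $\|v\|_{L^\infty(\Sigma)}\le \varepsilon\int_\Sigma|\nabla u|^2+C_{\varepsilon,\delta}$, while the choice of $N$ large enough yields $\int_\Sigma w^2\le \frac{1}{\varsigma_{N+1}}\int_\Sigma|\nabla u|^2$ as small as desired.

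Since $\eta\equiv 1$ on $\Theta$, one has the pointwise estimate
$$\int_\Theta\widetilde h e^{u/2}\le e^{\|v\|_{L^\infty}/2}\int_{\partial\Sigma}|\widetilde h|e^{(\eta w)/2},$$
so applying Lemma \ref{tmbdry} to $\eta w$ gives
$$\log\int_\Theta\widetilde h e^{u/2}\le \frac{\|v\|_{L^\infty}}{2}+\frac{1+\varepsilon}{16\tau\pi}\int_\Sigma|\nabla(\eta w)|^2+C.$$
Expanding the Dirichlet energy via Cauchy--Schwarz as in the proof of \eqref{tmint},
$$\int_\Sigma|\nabla(\eta w)|^2\le (1+\varepsilon)\int_{\Theta^\delta}|\nabla w|^2+\Bigl(1+\tfrac{1}{\varepsilon}\Bigr)C_\delta^2\int_\Sigma w^2,$$
and using $\int_{\Theta^\delta}|\nabla w|^2\le \int_{\Theta^\delta}|\nabla u|^2+\int_\Sigma|\nabla v|^2$ combined with the smallness of $\|v\|_\infty$ and $\int_\Sigma w^2$, one absorbs every surplus factor into $\varepsilon\int_\Sigma|\nabla u|^2+C$. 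Multiplying by $16\tau\pi$ and relabeling $\varepsilon$ produces the announced bound.

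The main obstacle is the bookkeeping of the several $\varepsilon$-losses: one from the $\|v\|_{L^\infty}$ term, one from Lemma \ref{tmbdry}, and one from the cross-term in $|\nabla(\eta w)|^2$. They must all collapse into a single $\varepsilon\int_\Sigma|\nabla u|^2$ so that the coefficient of $\int_{\Theta^\delta}|\nabla u|^2$ is precisely $1$; this requires choosing $N=N_{\varepsilon,\delta}$ such that $\varsigma_{N+1}$ dominates $C_\delta^2/\varepsilon^3$, in the spirit of the choice made after \eqref{zeta}.
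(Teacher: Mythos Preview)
Your overall strategy---cut-off plus eigenfunction splitting plus the global boundary inequality of Lemma~\ref{tmbdry}---is exactly what the paper has in mind when it says ``the same ideas can be used'' and refers to \cite{cr}. So the route is the right one.

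There is, however, a genuine gap in the bookkeeping. You write
\[
\int_{\Theta^\delta}|\nabla w|^2\le \int_{\Theta^\delta}|\nabla u|^2+\int_\Sigma|\nabla v|^2
\]
and then claim that ``the smallness of $\|v\|_\infty$ and $\int_\Sigma w^2$'' lets you absorb the remainder into $\varepsilon\int_\Sigma|\nabla u|^2$. But $\int_\Sigma|\nabla v|^2$ is \emph{not} small: by $H^1$-orthogonality it can be as large as $\int_\Sigma|\nabla u|^2$, and nothing in the choice of $N$ changes that. What \eqref{v} gives you is smallness of $\|v\|_{L^\infty}$ after a Young-inequality trade-off, not smallness of the Dirichlet energy of $v$. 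Even the weaker Cauchy--Schwarz version
\[
\int_{\Theta^\delta}|\nabla w|^2\le (1+\varepsilon'')\int_{\Theta^\delta}|\nabla u|^2+(1+1/\varepsilon'')\int_{\Theta^\delta}|\nabla v|^2
\]
does not help: to kill the second term you would need $\varepsilon''$ large, which then blows up the coefficient of $\int_{\Theta^\delta}|\nabla u|^2$. Likewise, passing through $\|\nabla v\|_{L^\infty}\le C_N\|\nabla v\|_{L^2}$ only makes the constant grow with $N$, in the opposite direction to what you need for $\int_\Sigma w^2$.

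The fix is not to compare $\int_{\Theta^\delta}|\nabla w|^2$ with $\int_{\Theta^\delta}|\nabla u|^2$ directly, but to expand $\nabla(\eta w)=\eta\nabla u-\eta\nabla v+w\nabla\eta$ and treat the $\eta\nabla v$ piece on the same footing as the $w\nabla\eta$ piece, using that $\|\nabla v\|_{L^\infty}\le C_N\|\nabla u\|_{L^2}$ together with Young's inequality in the form $C_N\|\nabla u\|_{L^2}\le \varepsilon_0\int_\Sigma|\nabla u|^2+C_{\varepsilon_0,N}$; the point is that this contribution enters \emph{linearly} (through the cross term with $\eta\nabla u$) rather than quadratically, so it can indeed be pushed into $\varepsilon\int_\Sigma|\nabla u|^2+C$. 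Alternatively, one can follow the argument in \cite[Proposition~2.10]{cr} (or \cite[Proposition~2.2]{lsr} in the interior case) verbatim.
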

\begin{corollary}\label{tmimprbdry}
Assume $\Theta_1,\dots,\Theta_k\subset\partial\Sigma$ are open domains satisfying, for some $\delta>0$,
$$\Theta_i^\delta\cap\Theta_j^\delta=\emptyset\qquad\forall i,j=1,\dots,k,i\ne j,$$
\begin{equation}\label{massbdry}
\frac{\int_{\Theta_j}\widetilde he^{u/2}}{\int_{\partial\Sigma}\widetilde he^{u/2}}\ge\delta\quad\forall j=1,\dots,k.
\end{equation}
Then, for every $\varepsilon>0$ there exists a constant $C=C_\varepsilon>0$ such that, for any $u\in\overline H^1(\Sigma)$,
$$\log\int_\Sigma\widetilde he^{u/2}\le\frac{1+\varepsilon}{16k\tau\pi}\int_\Sigma|\nabla u|^2+C.$$
\end{corollary}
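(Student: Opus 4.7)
The plan is to mimic exactly the argument used in Corollary \ref{tmimprint}, replacing the interior localized Trudinger--Moser inequality with its boundary counterpart, namely the preceding proposition
$$16\tau\pi\log\int_\Theta\widetilde he^{u/2}\le\int_{\Theta^\delta}|\nabla u|^2+\varepsilon\int_\Sigma|\nabla u|^2+C,$$
which holds for every open $\Theta\subset\partial\Sigma$ and $u\in\overline H^1(\Sigma)$.

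First I would apply this inequality to each $\Theta_i$, $i=1,\dots,k$. The mass hypothesis \eqref{massbdry} yields $\int_{\partial\Sigma}\widetilde he^{u/2}\le\frac1\delta\int_{\Theta_i}\widetilde he^{u/2}$, so that
$$16\tau\pi\log\int_{\partial\Sigma}\widetilde he^{u/2}\le 16\tau\pi\log\frac1\delta+\int_{\Theta_i^\delta}|\nabla u|^2+\varepsilon\int_\Sigma|\nabla u|^2+C_{\varepsilon,\delta},$$
uniformly in $i$. Summing these $k$ inequalities and exploiting the assumed pairwise disjointness $\Theta_i^\delta\cap\Theta_j^\delta=\emptyset$ for $i\ne j$ (which guarantees $\sum_i\int_{\Theta_i^\delta}|\nabla u|^2\le\int_\Sigma|\nabla u|^2$), one obtains
$$16k\tau\pi\log\int_{\partial\Sigma}\widetilde he^{u/2}\le(1+k\varepsilon)\int_\Sigma|\nabla u|^2+C',$$
with $C'=C'_{\varepsilon,\delta}$. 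Dividing by $16k\tau\pi$ and relabeling $\varepsilon'=k\varepsilon/(1)$ (since $k$ is fixed) gives the claimed inequality.

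There is no real obstacle beyond bookkeeping of constants: all the heavy analytic work (the singular Euclidean Trudinger--Moser inequality of Adimurthi--Sandeep, the isothermal reduction, and the reflection across $\partial\Sigma$ that gave rise to the factor $16\tau\pi$ instead of $8\tau\pi$) has already been absorbed into the preceding proposition. The only mild point to check is that the $k$-dependence hidden in the final $\varepsilon$ is harmless, since $k$ is a fixed integer determined by the hypotheses; accordingly, replacing $\varepsilon$ by $\varepsilon/k$ at the outset yields exactly the statement as written. I also note the minor typo that $\int_\Sigma\widetilde he^{u/2}$ in the displayed conclusion should read $\int_{\partial\Sigma}\widetilde he^{u/2}$, consistently with \eqref{massbdry} and with the interior analog in Corollary \ref{tmimprint}.
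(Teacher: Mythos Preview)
Your proposal is correct and follows exactly the approach the paper intends: the corollary is stated without proof precisely because it is the verbatim boundary analog of Corollary \ref{tmimprint}, obtained by replacing the interior localized inequality with the boundary one and repeating the summation-over-disjoint-regions argument. Your observation about the typo $\int_\Sigma\widetilde he^{u/2}$ versus $\int_{\partial\Sigma}\widetilde he^{u/2}$ is also accurate.
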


From now on, we assume that $K$ and $h$ are invariant under the action of an isometry group $\mathcal G$ with no fixed points on the boundary (as in \eqref{sym}). This symmetry assumption allows us to improve the constant in the inequality stated in Proposition \ref{tm} by employing the refined Trudinger-Moser inequality from Corollaries \ref{tmimprint} and \eqref{tmimprbdry}, which in turn yield coercivity. The argument is analogous to that of Proposition 2.12 in \cite{cr} (see also \cite{bls2,mo}).\\

Moreover, it is easy to see that if $u\in\widetilde H_{\mathcal G}$ does not vanish on some region of $\Sigma$, then the assumptions in \eqref{mass} and \eqref{massbdry} are satisfied. Consequently, the constant in the Trudinger-Moser inequality can be improved by 
$$k=\min\left\{\#\mathcal Gx:x\in\Sigma\setminus \mathrm{Fix}(\mathcal G)\right\}.$$
In our case, the only additional issue to consider is whether there are fixed points in the interior of $\Sigma$ or not.

\begin{proposition}\label{tmimpr}
For any $\varepsilon>0$ there exists a constant $C=C_\varepsilon>0$ such that for any $u\in\widetilde H_{\mathcal G}$, the following holds:
$$\log{\left(\sqrt{\left(\int_{\partial\Sigma}\widetilde he^{u/2}\right)^2+8\pi\chi\int_\Sigma\widetilde Ke^u}+\int_{\partial\Sigma}\widetilde he^{u/2}\right)}\le\frac{1+\varepsilon}{16\pi\sigma}\int_\Sigma|\nabla u|^2+C,$$
where
$$\sigma=\left\{\begin{array}{ll}k\tau&\mbox{if }\mathrm{Fix}(\mathcal G)=\emptyset\\\min\left\{2,2+2\min_{i:\,p_i\in\mathrm{Fix}(\mathcal G)}\alpha_i,k\tau\right\}&\mbox{if }\mathrm{Fix}(\mathcal G)\ne\emptyset,
\end{array}\right.,$$
\end{proposition}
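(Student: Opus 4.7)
The plan is to follow the strategy of the proof of Proposition \ref{tm}, replacing the global Trudinger--Moser inequality by its symmetrised local counterparts from Corollaries \ref{tmimprint} and \ref{tmimprbdry}, and then recombining the two estimates via the elementary inequality \eqref{ineq}. Setting $A=\int_\Sigma\widetilde Ke^u\ge0$ (recall the standing hypothesis $K\ge0$) and $B=\int_{\partial\Sigma}\widetilde he^{u/2}$, this reduces the statement to proving
$$\tfrac12\log A\le\frac{1+\varepsilon}{16\pi\sigma}\int_\Sigma|\nabla u|^2+C,\qquad\log|B|\le\frac{1+\varepsilon}{16\pi\sigma}\int_\Sigma|\nabla u|^2+C,$$
and then taking the logarithm of $\max\{\sqrt A,|B|\}$ as in the final display of the proof of Proposition \ref{tm}.

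For the boundary term I exploit the assumption $\mathrm{Fix}(\mathcal G)\cap\partial\Sigma=\emptyset$, which forces every $\mathcal G$-orbit in $\partial\Sigma$ to have cardinality at least $k$. A finite covering of $\partial\Sigma$ by short arcs produces one arc $\Theta$ on which $\int_\Theta|\widetilde h|e^{u/2}$ is at least a fixed fraction of $\int_{\partial\Sigma}|\widetilde h|e^{u/2}$; shrinking $\delta$ if necessary, the orbit $\{g\Theta\}_{g\in\mathcal G}$ consists of $k$ arcs with pairwise disjoint $\delta$-neighbourhoods, each carrying the same mass by $\mathcal G$-invariance of $u$ and $\widetilde h$. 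Hypothesis \eqref{massbdry} then holds and Corollary \ref{tmimprbdry} (applied with $|\widetilde h|$ and using $|B|\le\int_{\partial\Sigma}|\widetilde h|e^{u/2}$) yields the desired bound with constant $k\tau$, which is always $\ge\sigma$.

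The interior term is the more delicate piece and splits into two cases. If $\mathrm{Fix}(\mathcal G)=\emptyset$ the very same orbit--partitioning works inside $\Sigma$, and Corollary \ref{tmimprint} gives the bound with $\sigma=k\tau$. If $\mathrm{Fix}(\mathcal G)\ne\emptyset$, I fix $\delta>0$ so small that the $\delta$-neighbourhoods of the finitely many interior fixed points are pairwise disjoint and disjoint from $\partial\Sigma$, and set $\mathcal F:=\mathrm{Fix}(\mathcal G)$. Either at least half of $A$ comes from $\Sigma\setminus\mathcal F^\delta$, and running the orbit argument there gives constant $k\tau$; or at least half comes from $\mathcal F^\delta$, and then for some $p\in\mathcal F$ the ball $B_\delta(p)$ carries a comparable share of $A$, so Proposition \ref{tmloc} applied locally at $p$ provides a constant of $16\pi$ if $p$ is a regular fixed point, and $16\pi\min\{1,1+\alpha_i\}$ if $p=p_i$ is conical. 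Taking the worst constant over $p\in\mathcal F$ translates respectively into $2$ and $2+2\min_{i:p_i\in\mathrm{Fix}(\mathcal G)}\alpha_i$; combining with the previous alternative produces exactly $\sigma=\min\{2,\,2+2\min_{i:p_i\in\mathrm{Fix}(\mathcal G)}\alpha_i,\,k\tau\}$.

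The main obstacle I anticipate is the bookkeeping in this mixed case: one must verify that, whichever alternative is realised, the regions employed have pairwise disjoint $\delta$-neighbourhoods so that the individual applications of Proposition \ref{tmloc} can be summed with total Dirichlet energy at most $(1+\varepsilon)\int_\Sigma|\nabla u|^2$, and that the error terms $\varepsilon\int_\Sigma|\nabla u|^2$ produced by each localisation can be absorbed into the final $\varepsilon$ after a harmless redefinition. Once the two estimates for $\log A$ and $\log|B|$ are in hand, the recombination via $\max\{\sqrt A,|B|\}$ yields the claim verbatim as at the end of the proof of Proposition \ref{tm}.
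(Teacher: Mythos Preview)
Your proposal is correct and follows essentially the same route as the paper: reduce via \eqref{ineq} to separate bounds on $\tfrac12\log A$ and $\log|B|$; handle the boundary term by an orbit-splitting covering argument feeding into Corollary \ref{tmimprbdry}; and for the interior term distinguish $\mathrm{Fix}(\mathcal G)=\emptyset$ (orbit argument plus Corollary \ref{tmimprint}) from $\mathrm{Fix}(\mathcal G)\ne\emptyset$ (dichotomy between mass outside and mass near the fixed set, the latter treated by the localized Proposition \ref{tmloc}). One small caveat: you refer to ``the finitely many interior fixed points'', but $\mathrm{Fix}(\mathcal G)$ need only be a compact subset of $\Sigma$, not a finite set; the paper handles this by choosing $\delta$ uniformly via compactness, and your argument adapts in the same way by covering $\mathcal F$ with finitely many small balls.
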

and $k$ is as in \eqref{k}.
\begin{proof}
We first establish an estimate for the boundary term and then for the interior term.\\
For any $x\in\partial\Sigma$, we choose elements $g_{1,x},\dots,g_{l,x}\in\mathcal G$ that do not stabilize $x$, and select a small $\delta(x)>0$ such that the points $g_{i,x}x$ are at least $2\delta(x)$ apart, i.e.,
\begin{equation}\label{disjoint} 
B_{2\delta(x)}(g_{i,x}x)\cap B_{2\delta(x)}(g_{j,x}x)=\emptyset,\quad\forall\,i\ne j.
\end{equation}\\
We then cover $\partial\Sigma$ with the open balls $\{B_{\delta(x)}(x)\}_{x\in\partial\Sigma}$ so that, by compactness,
$$\partial\Sigma\subset\bigcup_{i=1}^N B_{\delta(x_i)}(x_i),$$
for some $x_i,\dots,x_N$. Without loss of generality, assume that
$$\frac{\int_{B_{\delta(x_1)}\cap\partial\Sigma}\widetilde he^{u/2}}{\int_{\partial\Sigma}\widetilde he^{u/2}}\ge\frac1N,$$
where $\delta=\delta(x_1)$. Since $u\in\widetilde H_{\mathcal G}$, for every $g_i=g_{i,x_1}$ we have
$$\frac{\int_{B_{\delta}(g_i x_1)\cap\partial\Sigma}\widetilde he^{u/2}}{\int_{\partial\Sigma}\widetilde he^{u/2}}=\frac{\int_{B_{\delta}(x_1)\cap\partial\Sigma}\widetilde he^{u/2}}{\int_{\partial\Sigma}\widetilde he^{u/2}}\ge\frac1N.$$
Moreover, by our choice in \eqref{disjoint} the balls $B_{\delta}(g_ix_1)$ are at a mutual distance of at least $2\delta$. Thus, by applying Corollary \ref{tmimprbdry} we obtain
\begin{equation}\label{tmsymbdry}
\log\int_{\partial\Sigma}\widetilde he^{u/2} \le \frac{1+\varepsilon}{16k\tau\pi}\int_\Sigma|\nabla u|^2+C.
\end{equation}

Next, consider the interior estimate. We distinguish two cases.
\begin{itemize}
\item[Case 1: $\mathrm{Fix}(\mathcal G)=\emptyset$] Similarly to before, by a covering argument, up to choosing a smaller $\delta>0$ there exist points $x_1,g_1x_1,\dots,g_lx_1$ such that
$$B_{2\delta}(g_ix_1)\cap B_{2\delta}(g_j x_1)=\emptyset,\quad\forall\,i\ne j,$$
and
$$\frac{\int_{B_\delta(g_ix_1)\cap\Sigma}\widetilde Ke^u}{\int_\Sigma\widetilde Ke^u}=\frac{\int_{B_\delta(x_1)\cap\Sigma}\widetilde Ke^u}{\int_\Sigma\widetilde Ke^u}\ge\frac1N,\quad\forall\,i=1,\dots,l.$$
Then, applying Corollary \ref{tmimprint}, we deduce
$$\log\int_\Sigma\widetilde Ke^u \le \frac{1+\varepsilon}{8k\tau\pi}\int_\Sigma|\nabla u|^2+C.$$
Combining this with the boundary estimate and using \eqref{ineq}, we obtain
\begin{equation}\label{tmsym}
\log\left(\sqrt{\left(\int_{\partial\Sigma}\widetilde he^{u/2}\right)^2+8\pi\chi\int_\Sigma\widetilde Ke^u}+\int_{\partial\Sigma}\widetilde he^{u/2}\right)\le\frac{1+\varepsilon}{16k\tau\pi}\int_\Sigma|\nabla u|^2+C.
\end{equation}

\item[Case 2: $\mathrm{Fix}(\mathcal G)\ne\emptyset$] Since $\mathrm{Fix}(\mathcal G)$ is compact, we may fix a small $\delta>0$ such that 
$$d(\partial\Sigma,\mathrm{Fix}(\mathcal G))\ge2\delta$$
and, for any $x\in\mathrm{Fix}(\mathcal G)$, $B_\delta(x)\subset\Sigma$ and it contains no conical point except possibly $x$. Then, either
\begin{equation}\label{out}
\frac{\int_{\Sigma\setminus B_\delta(x)}\widetilde Ke^u}{\int_\Sigma\widetilde Ke^u}\ge\frac12
\end{equation}
or
\begin{equation}\label{in}
\frac{\int_{B_\delta(x)}\widetilde Ke^u}{\int_\Sigma\widetilde Ke^u}\ge\frac12.
\end{equation}
In case \eqref{out}, by a similar covering argument as in Case 1, we obtain
\begin{equation}\label{int}
\log\int_\Sigma\widetilde Ke^u\le\frac{1+\varepsilon}{8k\tau\pi}\int_\Sigma|\nabla u|^2+C.
\end{equation}
Up to taking a smaller $\delta$, there exist $N\in\mathbb N,x_1,g_1x_1,\dots,g_lx_1$ such that
$$B_{2\delta}\left(g_ix_1\right)\cap B_{2\delta}\left(g_jx_1\right)=\emptyset\quad\forall\,i,j=1,\dots,l,\,i\ne j;$$
$$\frac{\int_{B_\delta(g_ix_1)\cap\left(\Sigma\setminus B_\delta(x)\right)}\widetilde Ke^{u/2}}{\int_\Sigma\widetilde Ke^{u/2}}\ge\frac12\frac{\int_{B_\delta(g_ix_1)\cap\left(\Sigma\setminus B_\delta(x)\right)}\widetilde Ke^{u/2}}{\int_{\Sigma\setminus B_\delta(x)}\widetilde Ke^{u/2}}\ge\frac1{2N},\,\forall\quad i=1,\dots,l.$$
From Corollary \ref{tmimprint} one gets \eqref{int}, hence, together with \eqref{tmsymbdry} and \eqref{ineq}, we get again \eqref{tmsym}.\\
If instead \eqref{in} holds true, then by applying Proposition \ref{tmloc}, with the coefficient $\alpha(p)$ defined in \eqref{alpha}, we have
\begin{eqnarray*}
16\pi\min\{1,1+\alpha(p)\}\log\int_\Sigma\widetilde Ke^u&\le&16\pi\min\{1,1+\alpha(p)\}\log\left(\frac1\delta\int_{B_\delta(0)}\widetilde Ke^u\right)\\
&\le&(1+\varepsilon)\int_\Sigma|\nabla u|^2+C.
\end{eqnarray*}
In either case, we deduce
$$\log\int_\Sigma\widetilde Ke^u\le\frac{1+\varepsilon}{8\pi\min\left\{2,2+2\min_{i:\,p_i\in\mathrm{Fix}(\mathcal G)}\alpha_i,k\tau\right\}}\int_\Sigma|\nabla u|^2+C.$$
Finally, combining this with the boundary estimate and using \eqref{ineq} yields
$$\log\left(\sqrt{\left(\int_{\partial\Sigma}\widetilde he^{u/2}\right)^2+8\pi\chi\int_\Sigma\widetilde Ke^u}+\int_{\partial\Sigma}\widetilde he^{u/2}\right)\le\frac{1+\varepsilon}{16\pi\sigma}\int_\Sigma|\nabla u|^2+C,$$
with
$$\sigma=\left\{\begin{array}{ll}k\tau&\mbox{if }\mathrm{Fix}(\mathcal G)=\emptyset\\\min\left\{2,2+2\min_{i:\,p_i\in\mathrm{Fix}(\mathcal G)}\alpha_i,k\tau\right\},&\mbox{if }\mathrm{Fix}(\mathcal G)\ne\emptyset\end{array}\right..$$
This completes the proof.
\end{itemize}
\end{proof}

We are now in position to prove Theorem \ref{chi>0}.
\begin{proof}[Proof of Theorem \ref{chi>0}]
First of all, since $K\gneqq0$, then $\int_\Sigma\widetilde Ke^u>0$ for any $u$, so the domain $H_\chi$ of the energy functional $\mathcal J$ coincides with the whole $\overline H^1(\Sigma)$ (see \eqref{hchi}) and existence of minimizing solutions will follow by showing coercivity in $\overline H^1(\Sigma)$ in the subcritical case and coercivity in $\widetilde H_{\mathcal G}$ in the critical and subcritical case.
\begin{itemize}
\item{Subcritical case:}\\
From Proposition \ref{tm}, for any $\varepsilon>0$ there exists $C>0$ such that for any $u\in\overline H^1(\Sigma)$, we have:
$$\mathcal F_\chi\left(\int_\Sigma\widetilde Ke^u,\int_{\partial\Sigma}\widetilde he^{u/2}\right)\le\frac12\left(\frac\chi\tau+\varepsilon\right)\int_\Sigma|\nabla u|^2+C,$$
implying:
$$\mathcal J(u)\ge\frac12\left(1-\frac\chi\tau-\varepsilon\right)\int_\Sigma|\nabla u|^2-C.$$
Since we are in the subcritical case, we can take $\varepsilon<1-\frac\chi\tau$ so that $\mathcal J$ is coercive, hence ensuring the existence of minimizers.

\item{Critical and supercritical cases:}\\
In the case $\mathrm{Fix}(\mathcal G)=\emptyset$, Proposition \ref{tmimpr} states that for any $\varepsilon>0$ one has
$$\mathcal F_\chi\left(\int_\Sigma\widetilde Ke^u,\int_{\partial\Sigma}\widetilde he^{u/2}\right)\le\frac12\left(\frac\chi{k\tau}+\varepsilon\right)\int_\Sigma|\nabla u|^2+C,$$
for any $u\in\widetilde H_{\mathcal G}$, hence
$$\mathcal J(u)\ge\frac12\left(1-\frac\chi{k\tau}-\varepsilon\right)\int_\Sigma|\nabla u|^2-C.$$
Taking $\varepsilon<1-\frac\chi{k\tau}$, we ensure coercivity and existence of minimizers.\\
The cases when $\mathcal G$ has fixed points on $\Sigma$ are dealt with similarly.
\end{itemize}
\end{proof}\

\section{Blow-up analysis}\label{blowup}

This section will be focused on proving the following result concerning blow-up analysis of sequence of solutions to problem \eqref{eqsing} and some other problems which approximate \eqref{eqsing}.\\
The main result of this Section, which will be proved in the end of the section, is the following.
\begin{theorem}\label{blowupanal}
Let $u_n$ be a sequence of solutions to the problem:
$$\left\{\begin{array}{ll}
-\Delta u_n+\frac{\lambda_n}{|\Sigma|}=2\widetilde K_ne^{u_n}&\mbox{in }\Sigma\\
\partial_\nu u_n=2\widetilde h_ne^{u_n/2}&\mbox{on }\partial\Sigma
\end{array}\right.,$$
where $\lambda_n\to\lambda\in\mathbb R$, $\widetilde K_n\to\widetilde K>0$, and $\widetilde h_n\to\widetilde h$ in the $\mathcal C^{0,\gamma}$ sense, with $\widetilde K$ and $\widetilde h$ as in \eqref{kh}. The singular set is defined as:
\begin{equation}\label{s}
\mathcal S:=\left\{p\in\overline\Sigma:\exists x_n\to p\mbox{ as }n\to\infty\mbox{ such that }u_n(x_n)\to\infty\right\}.
\end{equation}
The following alternatives hold:
\begin{enumerate}
\item If $\int_\Sigma\widetilde K_ne^{u_n}$ is uniformly bounded, then, up to a subsequence, either:
\begin{enumerate}
\item $u_n$ is uniformly bounded in $L^\infty(\Sigma)$, or;
\item $\mathcal S$ is finite and non-empty, such that
\begin{eqnarray}
\label{deltas}2\widetilde K_ne^{u_n}&\underset{n\to\infty}\rightharpoonup&\sum_{p\in\mathcal S\cap\Sigma}8\pi(1+\alpha(p))\delta_p+\sum_{p\in\mathcal S\cap\partial\Sigma}\gamma(p)\delta_p,\\
\nonumber2\widetilde h_ne^{u_n/2}&\underset{n\to\infty}\rightharpoonup&\sum_{p\in\mathcal S\cap\partial\Sigma}(4\pi(1+\beta(p))-\gamma(p))\delta_p,
\end{eqnarray}
for some $\gamma(p)\in\mathbb R$, $\alpha(p)$ as in \eqref{alpha} and
\begin{equation}\label{beta}
\beta(p):=\left\{\begin{array}{ll}\beta_j&p=q_j\\0&p\notin\{q_1,\dots,q_m\}\end{array}\right..
\end{equation}
\end{enumerate}
\item If $\int_\Sigma\widetilde Ke^{u_n}$ is unbounded, then $\mathrm{ind}(u_n)\underset{n\to\infty}\to\infty.$
\end{enumerate}
\end{theorem}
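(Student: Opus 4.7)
For part (1), I would apply a Brezis--Merle type dichotomy, adapted to the Neumann/corner setting. Writing $u_n = v_n + \overline u_n$, integration of the equation together with the uniform bound on $\lambda_n$ and the hypothesis $\int_\Sigma \widetilde K_n e^{u_n}=O(1)$ yields a uniform bound also on $\int_{\partial\Sigma}\widetilde h_n e^{u_n/2}$, so $v_n$ solves a Poisson--Neumann problem with right-hand sides bounded in $L^1(\Sigma)$ and $L^1(\partial\Sigma)$. Standard Brezis--Merle arguments (extended to the boundary and to the presence of the singular weights described after \eqref{kh}) give a finite set $\mathcal S\subset\overline\Sigma$ such that, along a subsequence, either $v_n$ is bounded in $L^\infty_{\mathrm{loc}}(\overline\Sigma\setminus\mathcal S)$ or $v_n\to-\infty$ on compacta of $\overline\Sigma\setminus\mathcal S$. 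Control of $\overline u_n$ from above uses $\widetilde K>0$ and the upper bound on $\int_\Sigma\widetilde K_n e^{u_n}$, while the lower bound follows from the $L^1$ data; thus $\mathcal S=\emptyset$ yields case (a), and $\mathcal S\ne\emptyset$ leads to genuine blow-up.

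For the mass quantization at each $p\in\mathcal S$, I would rescale $\widetilde u_n(y):=u_n(\exp_p(\varepsilon_n y))+2\log\varepsilon_n$, choosing $\varepsilon_n\to 0$ so that $\max\widetilde u_n=O(1)$. In isothermal coordinates, using the behaviour of $\widetilde K$ and $\widetilde h$ near singularities recorded in \eqref{kh}, the limit profile is an entire finite-energy Liouville solution: on $\mathbb R^2$ (possibly with a Dirac source of weight $\alpha(p)$) for interior $p$, or on $\mathbb R^2_+$ with a nonlinear Neumann condition and a corner of order $\beta(p)$ for boundary $p$. The Chen--Li classification and its singular extensions yield the interior quantum $8\pi(1+\alpha(p))$; for a boundary point, a Pohozaev-type identity on the half-plane model gives the total quantum $4\pi(1+\beta(p))$, split as an interior contribution $\gamma(p)$ and a boundary contribution $4\pi(1+\beta(p))-\gamma(p)$, with $\gamma(p)\in\mathbb R$ because $\widetilde h$ may change sign. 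Since each point of $\mathcal S$ carries a mass bounded below by a positive constant depending only on the singularity orders, $\mathcal S$ is finite; the absence of residual mass away from $\mathcal S$ then delivers \eqref{deltas} with the definitions \eqref{alpha}, \eqref{beta}.

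For part (2), I argue by contradiction: suppose along a subsequence $\mathrm{ind}(u_n)\le N$. Proposition \ref{morseij} then gives $\mathrm{ind}^*(u_n)\le N+2$ for the direct functional $\mathcal I_{\lambda_n}$ of \eqref{i}, whose second variation at a critical point is
$$\mathcal I_{\lambda_n}''(u_n)[\phi,\phi]=\int_\Sigma|\nabla\phi|^2-2\int_\Sigma\widetilde K_n e^{u_n}\phi^2-\int_{\partial\Sigma}\widetilde h_n e^{u_n/2}\phi^2.$$
Since $\widetilde K>0$ and $\int_\Sigma\widetilde K_n e^{u_n}\to\infty$, a Besicovitch-type covering argument produces, for any $L\in\mathbb N$, either $L$ pairwise disjoint interior balls $B_j\Subset\Sigma$ with $\int_{B_j}\widetilde K_n e^{u_n}\to\infty$, or $L$ well-separated scales around a single concentration point along which the local mass diverges; in both cases, taking cutoffs $\phi_j$ supported away from $\partial\Sigma$ (so the boundary integral vanishes) and scaled so that $\int|\nabla\phi_j|^2$ is bounded, one produces an $L$-dimensional subspace on which $\mathcal I_{\lambda_n}''(u_n)$ is negative definite. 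Choosing $L=N+3$ contradicts the bound on $\mathrm{ind}^*(u_n)$. The most delicate step is the boundary/corner case in part (1): the classification of the limit profiles on half-planes with prescribed singular Neumann nonlinearity, and the Pohozaev balance that identifies the free parameter $\gamma(p)$, are where the real technical work lies.
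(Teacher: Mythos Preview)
Your treatment of part~(1) is essentially the paper's approach: rescaling to entire Liouville-type limits (on $\mathbb R^2$ or $\mathbb R^2_+$, with or without a singular weight), followed by a Poho\v zaev identity for the quantization; the paper organizes this as Lemmas~\ref{finitemass} and~\ref{quantloc}.

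Part~(2), however, has a genuine gap. Your argument hinges on choosing the test functions $\phi_j$ ``supported away from $\partial\Sigma$ (so the boundary integral vanishes)''. But nothing prevents the diverging interior mass $\int_\Sigma\widetilde K_n e^{u_n}$ from concentrating at a \emph{boundary} point $p\in\partial\Sigma$: in that case any ball $B\Subset\Sigma$ misses the concentration region and $\int_B\widetilde K_n e^{u_n}$ need not diverge, so your covering/scale dichotomy fails to produce interior-supported cutoffs with large mass. If instead you allow the $\phi_j$ to touch $\partial\Sigma$, the term $-\int_{\partial\Sigma}\widetilde h_n e^{u_n/2}\phi_j^2$ reappears, and when $\widetilde h(p)<0$ it is \emph{positive} and of the same order as the interior gain, so $\mathcal I''_{\lambda_n}(u_n)[\phi_j,\phi_j]<0$ is no longer automatic.

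The paper resolves exactly this obstruction by passing to the rescaled limit and proving a Morse-index result for the entire profiles (Theorem~\ref{morseindex}): every solution of the half-plane problem \eqref{eqhalfplane} has index $\ge1$, and if it has infinite mass then it has infinite index. The case $h_0<0$ is the delicate one; there the cutoff alone is insufficient and one multiplies it by an explicit auxiliary function $z=Z_0\circ\bigl(g-(0,h_0/K_0)\bigr)$, built from the meromorphic representation of Theorem~\ref{solhalfplane}, which solves a linear boundary problem tailored so that the bad boundary contribution cancels. This construction, together with the observation that distinct bubbles (or distinct annular scales of a single infinite-mass bubble) yield linearly independent negative directions for $\mathcal I''_{\lambda_n}$, is then transferred back to $u_n$ and combined with Proposition~\ref{morseij} exactly as you do. Your overall architecture is right, but you are missing this boundary test-function construction, without which the argument does not close when $h<0$ somewhere on $\partial\Sigma$.
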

Here, $\mathrm{ind}(u_n)$ represents the Morse index of the perturbed energy functional
$$\mathcal J_n(u):=\frac12\int_\Sigma|\nabla u|^2-\mathcal F_{\frac{\lambda_n}{4\pi\chi}}\left(\int_\Sigma\widetilde K_ne^u,\int_{\partial\Sigma}\widetilde h_ne^{u/2}\right)$$
on the functions $u_n$, defined as in \eqref{index}.
This is essentially the number of negative eigenvalues of $\mathcal J_n''(u_n)$, counted with multiplicity.\\
Blow-up with infinite mass is a new major issue with respect to classical mean field problems: in fact, Gauss-Bonnet theorem \eqref{gaussbonnet} gives a constraint on the sum of the two nonlinear integrals but, if $K,h$ are sign-changing or have different sign, they may both diverge to $\infty$. The use of the Morse index allows to rule out this phenomenon, as in \cite{lsmr}, since solutions obtained with a finite-dimensional min-max scheme have uniformly bounded Morse index.\\

An easy consequence of Theorem \ref{blowupanal} is that alternative (1.b) occurs only if the parameter $\lambda$ belongs to a discrete set.
\begin{corollary}\label{compa}
Let $\Gamma$ be as in \eqref{gamma}. If $\lambda\notin\Gamma$ and $\mathrm{ind}(u_n)$ is uniformly bounded, then $u_n$ is uniformly bounded in $L^\infty(\Sigma)$.
\end{corollary}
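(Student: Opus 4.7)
The plan is to argue by contradiction, leveraging Theorem \ref{blowupanal} as a black box. Suppose that $u_n$ is not uniformly bounded in $L^\infty(\Sigma)$. Since $\mathrm{ind}(u_n)$ is assumed to be uniformly bounded, alternative (2) of Theorem \ref{blowupanal} (which would force $\mathrm{ind}(u_n)=\infty$) is ruled out. Therefore alternative (1) holds, and the failure of the $L^\infty$ bound forces us into case (1.b): the singular set $\mathcal S$ defined in \eqref{s} is finite and nonempty, and the two measures $2\widetilde K_n e^{u_n}$ and $2\widetilde h_n e^{u_n/2}$ converge weakly to the atomic measures described in \eqref{deltas}.

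The next step is to integrate the equation and apply the divergence theorem. Using the Neumann boundary condition to rewrite $\int_\Sigma \Delta u_n = \int_{\partial\Sigma} \partial_\nu u_n = 2\int_{\partial\Sigma}\widetilde h_n e^{u_n/2}$, the integrated version of the approximating system reads
$$\lambda_n = 2\int_\Sigma \widetilde K_n e^{u_n} + 2\int_{\partial\Sigma}\widetilde h_n e^{u_n/2}.$$
Passing to the limit $n\to\infty$ and invoking \eqref{deltas}, the unknown parameters $\gamma(p)$ appear with opposite signs in the two sums and cancel, yielding the clean formula
$$\lambda = 8\pi\sum_{p\in\mathcal S\cap\Sigma}(1+\alpha(p)) + 4\pi\sum_{p\in\mathcal S\cap\partial\Sigma}(1+\beta(p)).$$

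Finally, I would check that this expression belongs to $\Gamma$. Partition $\mathcal S$ by setting $k_1:=\#\bigl(\mathcal S\cap\Sigma\setminus\{p_1,\dots,p_n\}\bigr)$, $k_2:=\#\bigl(\mathcal S\cap\partial\Sigma\setminus\{q_1,\dots,q_m\}\bigr)$, $I:=\{i:\,p_i\in\mathcal S\}$, and $J:=\{j:\,q_j\in\mathcal S\}$. Regular blow-up points contribute $8\pi$ or $4\pi$ (since $\alpha(p)=0$ or $\beta(p)=0$ there), while singular ones contribute $8\pi(1+\alpha_i)$ or $4\pi(1+\beta_j)$. Thus
$$\lambda = 4\pi(2k_1+k_2) + 8\pi\sum_{i\in I}(1+\alpha_i) + 4\pi\sum_{j\in J}(1+\beta_j),$$
which is exactly of the form \eqref{gamma} with $k=2k_1+k_2\in\mathbb N_0$. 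This contradicts $\lambda\notin\Gamma$, completing the proof.

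There is really no obstacle here beyond the bookkeeping: all the genuine work lives in Theorem \ref{blowupanal}, whose quantization of masses and Morse-index dichotomy is precisely engineered so that this corollary becomes a two-line consequence. The only mild care needed is the algebraic check that interior regular points contribute multiples of $8\pi$ and boundary regular points contribute multiples of $4\pi$, which together land in the lattice $4\pi\mathbb N_0$ parametrizing the first summand of $\Gamma$.
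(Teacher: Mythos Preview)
Your proof is correct and is precisely the argument the paper has in mind when it calls this an ``easy consequence'' of Theorem \ref{blowupanal}; the paper does not spell out the details, but your integration of the equation, the cancellation of the $\gamma(p)$ terms, and the bookkeeping showing the resulting value lies in $\Gamma$ are exactly what is needed.
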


To perform the blow-up, we first recall the key ideas from the classical works \cite{bm,ls}, which were originally formulated for the regular case in closed surfaces and subsequently extended to the singular case in \cite{ta}. These works indicate that around any isolated concentration point $p\in\Sigma$, suitably rescaled solutions converge to solution to a limit problem on $\mathbb R^2$, whose solutions have been classified, provided the mass is bounded.
The limit problem is given by
$$\left\{\begin{array}{ll}-\Delta v=2K_0|x|^{2\alpha}e^v&\mbox{in }\mathbb R^2\\\int_{\mathbb R^2}|x|^{2\alpha}e^v<\infty
\end{array}\right.,$$
where $K_0>0$ and $\alpha=\alpha(p)$ as in \eqref{alpha}. Solutions have been classified by Chen and Li in \cite{cl1} for $\alpha=0$ and by Prajapat and Tarantello in \cite{pt} for $\alpha\ne0$.\\
When $\partial\Sigma\ne\emptyset$ and blow-up occurs at the boundary, the corresponding limiting problem has non-linear Neumann boundary condition in the half-space:
$$\left\{\begin{array}{ll}
-\Delta v=2 K_0|x|^{2\beta}e^v&\mbox{in }\mathbb R^2_+\\\partial_\nu v=2h_0|x|^\beta e^{v/2}&\mbox{on }\partial\mathbb R^2_+\\\int_{\mathbb R^2_+}|x|^{2\beta}e^v+\int_{\partial\mathbb R^2_+}|x|^\beta e^{v/2}<\infty\end{array}\right.,$$
where $K_0>0$ and $\beta=\beta(p)$ as in \eqref{beta}. The classification result for such problem has been obtained by Zhang in \cite{z} for $\beta=0$ and by Jost, Wang and Zhou in \cite{jwz}.\\
In the study of problem \eqref{eqsing} we may not have bounded mass, since both integrals in \eqref{gaussbonnet} may be infinite if $\widetilde K$ and $\widetilde h$ have different sign; in the assumptions of Theorem \ref{minmax} this could happen if $\widetilde h$ is sign-changing or everywhere negative. This is a new major issue, which affects massively the blow-up analysis, as in the works \cite{lsmr,lsrsr} which do not assume a bounded mass condition.\\
We tackle this issue by means of the Morse index, namely we show that any solution to limiting problems with infinite mass must have infinite Morse index. The first part of this section is devoted to the Morse index of entire functions, whereas in the second part we conclude the blow-up analysis and prove Theorem \ref{blowupanal}.

\subsection{Solutions to the limit problems and their Morse Index}\label{limitprob}

\

In this subsection, we study the Morse index of the solutions to two limit problems. One problem is defined on the whole space and the other on the half-space, that is, respectively,
\begin{equation}\label{eqplane}
-\Delta v=2K_0|x|^{2\alpha}e^v\quad\mbox{in }\mathbb R^2,
\end{equation}
and
\begin{equation}\label{eqhalfplane}
\left\{\begin{array}{ll}-\Delta v=2 K_0|x|^{2\beta}e^v&\mbox{in }\mathbb R^2_+\\\partial_\nu v=2h_0|x|^\beta e^{v/2}&\mbox{on }\partial\mathbb R^2_+
\end{array}\right..
\end{equation}

Solutions to \eqref{eqplane} and \eqref{eqhalfplane} without mass assumptions have been classified, respectively, in \cite{l} and \cite{gjm}:
\begin{theorem}[\cite{l}]
Any solution $v$ to \eqref{eqplane} is in the form
$$v=\log\left(\frac{4|g'|^2}{\left(1+K_0|g|^2\right)^2}\right)-2\alpha\log|x|,$$
where $g:\mathbb C\setminus\{0\}\to\overline{\mathbb C}$ is a locally univalent meromorphic function.
\end{theorem}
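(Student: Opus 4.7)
The plan is to reduce the singular equation to the classical Liouville equation on the punctured plane and then invoke the classical representation theorem.

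First I would \emph{desingularize}: set $w(x) := v(x) + 2\alpha \log|x|$ on $\mathbb{R}^2\setminus\{0\}$. Since $\log|x|$ is harmonic away from the origin,
\begin{equation*}
-\Delta w \;=\; -\Delta v \;=\; 2 K_0 |x|^{2\alpha} e^v \;=\; 2 K_0 e^{w} \qquad \text{on } \mathbb{C}^* := \mathbb{R}^2\setminus\{0\},
\end{equation*}
so $w$ solves the classical (non-singular) Liouville equation on the punctured plane.

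Next I would invoke the \emph{classical Liouville representation formula}: every solution of $-\Delta w = 2 K_0 e^w$ on a domain $\Omega\subset\mathbb{C}$ is of the form $w = \log\bigl(\frac{4|G'|^2}{(1+K_0|G|^2)^2}\bigr)$ for some locally univalent meromorphic $G:\Omega\to\overline{\mathbb{C}}$. The standard derivation introduces the holomorphic invariant $\Phi(z) := w_{zz} - \frac{1}{2} w_z^2$, with $z = x+iy$ and $w_z = \frac{1}{2}(w_x - i w_y)$; a short computation using $4\,w_{z\bar z} = \Delta w = -2 K_0 e^w$ yields $\partial_{\bar z}\Phi \equiv 0$, so $\Phi$ is holomorphic. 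One then recovers $w$ in the stated form by integrating the associated Schwarzian-type equation $\{G,z\}=2\Phi$ for a local developing map $G$ (whose linearization is an ODE of the form $u''+\Phi u=0$); local univalence $G'\ne 0$ is automatic from the formula. Applied on $\Omega = \mathbb{C}^*$ this yields the desired $g := G$, and substituting back $v = w - 2\alpha\log|x|$ gives the claimed representation.

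The main obstacle lies in the \emph{non-simply-connected topology} of $\mathbb{C}^*$: the developing map $G$ is a priori constructed only on the universal cover, so one must analyze its monodromy around the origin and verify that it acts by a M\"obius transformation preserving $\frac{4|G'|^2}{(1+K_0|G|^2)^2}$, so that $G$ descends to a single-valued locally univalent meromorphic map on $\mathbb{C}^*$. The branching of $g$ near the origin is governed by the weight $-2\alpha\log|x|$ (with the condition $\alpha>-1$ ensuring the correct local exponent), and the absence of any mass or integrability condition on $v$ — in contrast with the classifications of \cite{cl1, pt} — is precisely what allows $g$ to be a general meromorphic map rather than a fractional linear transformation. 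This monodromy analysis is carried out in \cite{l}.
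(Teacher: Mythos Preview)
The paper does not prove this statement: it is quoted as a classification result from \cite{l} and used as a black box in the subsequent Morse-index analysis (Theorem~\ref{morseindex}). There is therefore no argument in the paper to compare your sketch against. Your outline is the standard route to such a representation (desingularize to the classical Liouville equation on $\mathbb{C}^*$, build the developing map via the holomorphic Schwarzian invariant, then handle monodromy on the punctured plane), and you correctly identify the non-simply-connected topology as the substantive issue and defer it to \cite{l}, exactly as the paper does. One small caution: the fact that the monodromy acts by a M\"obius transformation in the isometry group only guarantees that the \emph{metric} $\frac{4|G'|^2}{(1+K_0|G|^2)^2}$ descends to $\mathbb{C}^*$, not that $G$ itself does; obtaining a single-valued $g$ on $\mathbb{C}^*$ requires the stronger statement that the monodromy can be taken trivial, which is part of what \cite{l} establishes.
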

\begin{theorem}[Theorem 2, \cite{gjm}]\label{solhalfplane}
Any solution $v$ to \eqref{eqhalfplane} is in the form
$$v=\log\left(\frac{4|g'|^2}{\left(1+K_0|g|^2\right)^2}\right)-2\beta\log|x|,$$
where $g:\overline{\mathbb C}\setminus\{0\}\to\overline{\mathbb C}$ is a locally univalent meromorphic function.
\end{theorem}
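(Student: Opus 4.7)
The plan is to interpret the equation geometrically and then exploit the classical developing-map construction together with Schwarz reflection. I would first set $u(x):=v(x)+2\beta\log|x|$. Since $\log|x|$ is harmonic on $\mathbb{R}^2_+\setminus\{0\}$ and the outer normal on $\partial\mathbb{R}^2_+$ points in the $-t$ direction (so that $\partial_\nu\log|x|$ vanishes on the real axis away from $0$), the function $u$ satisfies the regular constant-curvature problem
\[
-\Delta u=2K_0e^u\ \text{in }\mathbb{R}^2_+\setminus\{0\},\qquad \partial_\nu u=2h_0e^{u/2}\ \text{on }\partial\mathbb{R}^2_+\setminus\{0\}.
\]
Geometrically, the conformal metric $e^u|dx|^2$ has constant Gaussian curvature $K_0$ in the interior, constant geodesic curvature $h_0$ on the boundary, and a conical-type singularity of total angle $2\pi(1+\beta)$ at the origin, so proving the theorem reduces to representing any such ``doubly constant-curvature'' metric via a spherical developing map.

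Next, on any simply connected subdomain $U\subset\mathbb{R}^2_+\setminus\{0\}$ the classical Liouville representation for the regular equation $-\Delta u=2K_0e^u$ yields a locally univalent meromorphic map $g\colon U\to\overline{\mathbb C}$ such that $e^u=4|g'|^2/(1+K_0|g|^2)^2$: this $g$ is the developing map of the constant-curvature metric into the sphere of curvature $K_0$. The boundary condition of constant geodesic curvature $h_0$ forces $g(\partial U\cap\partial\mathbb{R}^2_+)$ to lie on a curve of constant geodesic curvature in this round sphere, i.e.\ a generalized circle in $\overline{\mathbb C}$. Schwarz reflection across such a circle extends $g$ meromorphically across $\partial\mathbb{R}^2_+$, and iterating this reflection together with analytic continuation in the interior produces a candidate developing map on $\overline{\mathbb C}\setminus\{0\}$; a removable-singularity argument at $\infty$ completes the extension.

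The hard part is the monodromy around the origin. Since $\overline{\mathbb C}\setminus\{0\}$ has fundamental group $\mathbb Z$, traversing a loop around $0$ could a priori modify $g$ by a Möbius isometry of the round sphere. To conclude one must exploit the precise conical structure: near $0$ the Euclidean conformal factor $|x|^{2\beta}e^{u}$ must define an honest smooth metric on the cone of angle $2\pi(1+\beta)$, and this asymptotic behavior forces the monodromy to be a rotation about the image of $0$ compatible with $\beta$. Post-composing $g$ with a suitable isometry of the sphere absorbs this rotation and delivers a single-valued locally univalent meromorphic map on $\overline{\mathbb C}\setminus\{0\}$, yielding the claimed formula for $v=u-2\beta\log|x|$. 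The smooth boundary case $\beta=0$ is the content of Zhang \cite{z}; the general case \cite{gjm} refines this monodromy analysis at a corner of order $\beta$, and that is where the technical heart of the argument lies.
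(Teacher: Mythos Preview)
The paper does not prove this statement: Theorem~\ref{solhalfplane} is simply quoted from \cite{gjm} as a classification result and used as a black box in the proof of Theorem~\ref{morseindex}. There is therefore no ``paper's own proof'' to compare your proposal against.

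That said, your outline is the right overall strategy (remove the weight, build the spherical developing map, reflect across the boundary, analyze the monodromy around $0$), but there is a genuine gap in the final step. You write that ``post-composing $g$ with a suitable isometry of the sphere absorbs this rotation and delivers a single-valued locally univalent meromorphic map''. This is not correct: if the analytic continuation of $g$ around $0$ satisfies $g(e^{2\pi i}z)=M\circ g(z)$ for some M\"obius isometry $M$, then replacing $g$ by $A\circ g$ gives monodromy $AMA^{-1}$, which is still nontrivial whenever $M\ne\mathrm{id}$. Post-composition conjugates the monodromy; it cannot kill it. What actually forces the monodromy to be trivial is the regularity of $v$ itself: the original $v$ solves \eqref{eqhalfplane} on all of $\mathbb R^2_+$, including the origin, so $e^v$ (and hence $|x|^{2\beta}e^u$) is a genuine single-valued function near $0$. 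Translating this back to the Schwarzian of $g$ shows the Schwarzian is single-valued on $\mathbb C\setminus\{0\}$, and a more careful local analysis of the ODE near $0$ (this is what \cite{gjm} carries out) then pins down the monodromy. Your sentence correctly flags this as ``the technical heart'', but the mechanism you name for resolving it does not work.

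A smaller point: the extension to $\infty$ is not a removable-singularity argument in the usual sense; one must allow $g$ to have a pole there and check local univalence, which again requires control of the Schwarzian at infinity.
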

The definition of the Morse index of a solution $v$ of problem \eqref{eqplane} is as follows:
$$\mathrm{ind}(v):=\sup\left\{\dim E:E\subset\mathcal C^\infty_0\left(\mathbb R^2\right),\,\mathcal Q_v[\phi,\phi]<0\,\forall\phi\in E\setminus\{0\}\right\},$$ where $\mathcal Q_v$ is defined as:
$$\mathcal Q_v[\phi,\phi]=\int_{\mathbb R^2}\left(|\nabla\phi|^2-2K_0|x|^{2\alpha}e^v\phi^2\right).$$
Similarly, the Morse index of a solution $v$ of problem \eqref{eqhalfplane} is defined as:
$$\mathrm{ind}(v):=\sup\left\{\dim E: E\subset\mathcal C^\infty_0\left(\overline{\mathbb R^2_+}\right),\,\mathcal Q_v[\phi,\phi]<0\,\forall\phi\in E\setminus\{0\}\right\},$$ where $\mathcal Q_v$ is dfined as:
$$\mathcal Q_v[\phi,\phi]=\int_{\mathbb R^2_+}\left(|\nabla\phi|^2-2K_0|x|^{2\beta}e^v\phi^2\right)-\int_{\partial\mathbb R^2_+}h_0|x|^\beta e^{v/2}\phi^2.$$
The main result of this subsection is the following:
\begin{theorem}\label{morseindex}\
\begin{enumerate}
\item The Morse index of any solution to \eqref{eqplane} satisfies $\mathrm{ind}(v)\ge1$. Moreover,
$$\mbox{if}\qquad\int_{\mathbb R^2}|x|^{2\alpha}e^v=\infty\qquad\mbox{then}\qquad\mathrm{ind}(v)=\infty.$$
\item The Morse index of any solution to \eqref{eqhalfplane} satisfies $\mathrm{ind}(v)\ge1$. Moreover,
$$\mbox{if}\qquad\int_{\mathbb R^2_+}|x|^{2\beta}e^v=\infty\qquad\mbox{then}\qquad\mathrm{ind}(v)=\infty.$$
\end{enumerate}
\end{theorem}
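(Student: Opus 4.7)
Both statements will be obtained by producing explicit test functions that make $\mathcal Q_v$ negative: one such function gives $\mathrm{ind}(v)\geq 1$, a sequence with pairwise disjoint supports gives $\mathrm{ind}(v)=\infty$. The positivity of the weight $|x|^{2\alpha}e^v$ (resp.\ $|x|^{2\beta}e^v$) makes truncated constants and annular cut-offs the natural candidates.

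\textbf{Lower bound.} For the bound $\mathrm{ind}(v)\geq 1$ I would use a radial logarithmic cut-off $\phi_R$ equal to $1$ on $B_R$, supported in $B_{R^2}$, chosen so that $\int|\nabla\phi_R|^2=O((\log R)^{-1})\to 0$. In the whole-plane setting dominated convergence gives
$$\lim_{R\to\infty}\mathcal Q_v[\phi_R,\phi_R]=-2K_0\int_{\mathbb R^2}|x|^{2\alpha}e^v,$$
which is strictly negative whenever the mass is finite (the infinite-mass case being already covered by part (2)). In the half-plane case the same cut-off produces an additional boundary contribution; one combines the resulting limit with the Gauss--Bonnet-type identity obtained by integrating \eqref{eqhalfplane} on $B_R^+$ and letting $R\to\infty$ (using the decay of finite-mass solutions guaranteed by Theorem \ref{solhalfplane}) to conclude it is again strictly negative.

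\textbf{Infinite Morse index via disjoint annular cut-offs.} Suppose $\int|x|^{2\alpha}e^v=\infty$. Choose radii $R_k\to\infty$ with $R_{k+1}>4R_k$ and
$$\int_{B_{R_{k+1}/2}\setminus B_{2R_k}}|x|^{2\alpha}e^v\geq k,$$
which is possible since the total integral diverges. Let $\phi_k$ be a radial logarithmic cut-off equal to $1$ on $B_{R_{k+1}/2}\setminus B_{2R_k}$ and supported in the annulus $A_k:=B_{R_{k+1}}\setminus B_{R_k}$, so that $\int|\nabla\phi_k|^2$ is bounded by a universal constant $C$. In the whole-plane case this yields directly $\mathcal Q_v[\phi_k,\phi_k]\leq C-2K_0 k\to-\infty$. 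Since the supports $A_k$ are pairwise disjoint, any finite linear combination $\phi=\sum c_k\phi_k$ satisfies $\mathcal Q_v[\phi,\phi]=\sum c_k^2\,\mathcal Q_v[\phi_k,\phi_k]<0$, so $\mathrm{span}\{\phi_k\}$ is an infinite-dimensional negative subspace for $\mathcal Q_v$ and $\mathrm{ind}(v)=\infty$.

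\textbf{Main obstacle.} In the half-plane case the quadratic form $\mathcal Q_v[\phi_k,\phi_k]$ contains the extra term $-\int_{A_k^+\cap\partial\mathbb R^2_+}h_0|x|^\beta e^{v/2}\phi_k^2$, with $A_k^+:=A_k\cap\mathbb R^2_+$, which is nonpositive (hence harmless) when $h_0\geq 0$ but could in principle offset the diverging interior contribution when $h_0<0$. The hard part is to rule out such a cancellation. To handle it I would integrate \eqref{eqhalfplane} on $A_k^+$, obtaining
$$2K_0\int_{A_k^+}|x|^{2\beta}e^v+2\int_{A_k^+\cap\partial\mathbb R^2_+}h_0|x|^\beta e^{v/2}=-\int_{\partial B_{R_{k+1}}\cap\mathbb R^2_+}\partial_r v+\int_{\partial B_{R_k}\cap\mathbb R^2_+}\partial_r v,$$
which bounds the unwanted positive boundary contribution by the interior mass plus the fluxes on the two circular arcs. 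A mean-value argument on $R\mapsto\int_{\partial B_R\cap\mathbb R^2_+}\partial_r v$ allows one to pick a subsequence of radii along which these arc fluxes remain controlled; alternatively, the explicit description of $v$ provided by Theorem \ref{solhalfplane} yields the required asymptotics directly. Either way the interior negative contribution still dominates and $\mathcal Q_v[\phi_k,\phi_k]\to-\infty$, closing the argument.
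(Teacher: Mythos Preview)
For part (1), and for part (2) when $h_0\ge0$, your approach coincides with the paper's: a logarithmic cut-off gives the lower bound and pairwise disjoint annular cut-offs give the infinite index.

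The gap is in part (2) when $h_0<0$. You correctly isolate the obstacle, but neither of your proposed fixes closes it. After substituting your flux identity into $\mathcal Q_v[\phi_k,\phi_k]$ you still need the arc fluxes $\int_{\partial B_R\cap\mathbb R^2_+}\partial_r v$ to be $o\bigl(\int_{A_k^+}|x|^{2\beta}e^v\bigr)$ along some sequence of radii. But that flux equals $2|h_0|\int_{(-R,R)}|x|^\beta e^{v/2}-2K_0\int_{B_R^+}|x|^{2\beta}e^v$, the difference of two increasing quantities each of which may diverge; there is no mean-value mechanism that bounds it without an a priori comparison between the two masses, and nothing rules out, say, $F(R)\to+\infty$ monotonically. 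Invoking Theorem \ref{solhalfplane} ``directly'' does not help either: the classification allows an arbitrary locally univalent meromorphic $g$, which for infinite-mass solutions carries no usable asymptotics. The same issue makes your finite-mass Gauss--Bonnet argument for $\mathrm{ind}(v)\ge1$ incomplete: you need at least $\int_{\partial\mathbb R^2_+}|x|^\beta e^{v/2}<\infty$ and the precise decay of $v$, neither of which is contained in Theorem \ref{solhalfplane} as stated.

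The paper avoids all flux estimates by using the classification structurally rather than asymptotically. One writes down the explicit bounded positive function
$$Z_0(s,t)=\log\left(\frac{2t}{1+K_0\bigl(s^2+(t+h_0/K_0)^2\bigr)}-\frac1{h_0}\right),$$
which satisfies $-\Delta Z_0=2K_0e^{V_0}(Z_0+h_0/K_0+1/h_0)$ and $\partial_\nu Z_0=h_0e^{V_0/2}Z_0$ for the model profile $V_0$, and pulls it back by the developing map $g$ of Theorem \ref{solhalfplane} to obtain $z:=Z_0\circ(g-(0,h_0/K_0))$, which solves the same linear problem with $v$ in place of $V_0$. Testing $\mathcal Q_v$ against $\phi z$ (respectively $\psi z$) and integrating by parts, the boundary terms cancel \emph{identically}, leaving
$$\mathcal Q_v(\phi z)=\int_{\mathbb R^2_+}z^2|\nabla\phi|^2+2\Bigl(h_0+\frac{K_0}{h_0}\Bigr)\int_{\mathbb R^2_+}|x|^{2\beta}e^v\phi^2 z,$$
with a strictly negative coefficient in the second integral; since $0<-1/h_0\le z\le C'$, this is negative once the cut-off is tuned, whatever the mass. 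The missing idea is therefore not an estimate but a weight: instead of bounding the bad boundary term, one multiplies the cut-off by a function $z$ engineered so that it is absorbed algebraically.
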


\begin{proof}\
\begin{enumerate}
\item For the first statement it suffices to find $\phi\in\mathcal C^\infty_0\left(\mathbb R^2\right)$ such that $\mathcal Q_v[\phi,\phi]<0$. We fix $R>1$ and define $\phi:=\eta\left(1-\frac{\log|x|}{\log R}\right)$, with $\eta\in\mathcal C^\infty_0(\mathbb R)$ such that $\eta_{(-\infty,0]}\equiv0,\eta_{[1,\infty)}\equiv1$. Therefore, $\phi$ verifies
$$\left\{\begin{array}{ll}\phi\equiv1&\mbox{ in }B_1(0)\\
\phi\ge0&\mbox{ in }\mathbb R^2\\
\int_{\mathbb R^2}|\nabla\phi|^2\le\frac{C_0}{\log R}&,\end{array}\right.$$
hence
$$\mathcal Q_v[\phi,\phi]\le\int_{\mathbb R^2}|\nabla\phi|^2-\int_{B_1(0)}2K_0|x|^{2\alpha}e^v\le\frac{C_0}{\log R}-2K_0\int_{B_1(0)}|x|^{2\alpha}e^v,$$
which is negative if $R$ is large enough.\\
To prove the second statement, we will show that $v$ is unstable outside any compact set, that is: given $M_0>0$ we will find $\psi\in\mathcal C^\infty_0\left(\mathbb R^2\setminus\overline{B_{M_0}(0)}\right)$ such that $\mathcal Q_v[\psi,\psi]<0$. In this way, we can construct iteratively a sequence $\psi_n\in\mathcal C^\infty_0\left(B_{M_{n+1}}(0)\setminus\overline{B_{M_n}(0)}\right)$ such that $\mathcal Q_v[\psi_n,\psi_n]<0$ and clearly they are linearly independent.\\
For $M>2M_0>0$ fixed we consider a smooth cut-off $\psi=\psi_{M,M_0}$ such that
\begin{equation}\label{psi}
\left\{\begin{array}{ll}\psi\equiv0&\mbox{in }B_{M_0}(0)\cup\left(\mathbb R^2\setminus B_{2M}(0)\right)\\
\psi\equiv1&\mbox{in }A_{2M_0,M}(0)\\
0\le\psi\le1&\mbox{in }\mathbb R^2\\
\int_{\mathbb R^2}|\nabla\psi|^2\le C_0&\mbox{independently on }M_0,M.\end{array}\right.
\end{equation}
Since $v$ is smooth, then
$$\int_{B_{2M_0}(0)}|x|^{2\alpha}e^v<\infty=\int_{\mathbb R^2\setminus B_{2M_0}(0)}|x|^{2\alpha}e^v,$$
therefore for $M$ large enough we will have $2K_0\int_{A_{2M_0,M}(0)}|x|^{2\alpha}e^v>C_0$, hence
\begin{eqnarray*}
\mathcal Q_v[\psi,\psi]&\le&\int_{\mathbb R^2}|\nabla\psi|^2-\int_{A_{2M_0,M}(0)}2K_0|x|^{2\alpha}e^v\psi^2\\
&\le&C_0-2K_0\int_{A_{2M_0,M}(0)}|x|^{2\alpha}e^v<0.
\end{eqnarray*}
\item If $h_0\ge0$, then we choose $\phi_R,\psi_{M,M_0}$ as before with $R$ such that
$$2K_0\int_{B^+_1(0)}|x|^{2\beta}e^v>\int_{\mathbb R^2_+}|\nabla\phi|^2$$
and $M$ such that
$$2K_0\int_{A_{2M_0,M}(0)\cap\mathbb R^2_+}|x|^{2\beta}e^v>\int_{\mathbb R^2_+}|\nabla\phi|^2,$$
so that
\begin{eqnarray*}
\mathcal Q_v[\phi,\phi]&\le&\int_{\mathbb R^2_+}|\nabla\phi|^2-2K_0\int_{\mathbb R^2_+}|x|^{2\beta}e^v\phi^2<0.\\
\mathcal Q_v[\psi,\psi]&\le&\int_{\mathbb R^2_+}|\nabla\psi|^2-2K_0\int_{\mathbb R^2_+}|x|^{2\beta}e^v\psi^2<0.
\end{eqnarray*}
If $h_0<0$, we consider
$$Z_0(s,t):=\log\left(\frac{2t}{1+K_0\left(s^2+\left(t+\frac{h_0}{K_0}\right)^2\right)}-\frac1{h_0}\right),$$
which solves
$$\left\{\begin{array}{ll}
-\Delta Z_0=2 K_0e^{V_0}\left(Z_0+\frac{h_0}{K_0}+\frac1{h_0}\right)&\mbox{in }\mathbb R^2_+\\
\partial_\nu Z_0=h_0e^{V_0/2}Z_0&\mbox{on }\partial\mathbb R^2_+,
\end{array}\right.$$
with
$$V_0(s,t):=\log\frac4{\left(1+K_0\left(s^2+\left(t+\frac{h_0}{K_0}\right)^2\right)\right)^2}.$$
Since, by Theorem \ref{solhalfplane}, $v$ is in the form $v=V_0\circ\left(g-\left(0,\frac{h_0}{K_0}\right)\right)+2\log\frac{|g'|}{|x|^\beta}$, then $z:=Z_0\circ\left(g-\left(0,\frac{h_0}{K_0}\right)\right)$ solves
$$\left\{\begin{array}{ll}
-\Delta z=2 K_0|x|^{2\beta}e^v\left(z+\frac{h_0}{K_0}+\frac1{h_0}\right)&\mbox{in }\mathbb R^2_+\\
\partial_\nu z=h_0|x|^\beta e^{v/2}z&\mbox{on }\partial\mathbb R^2_+
\end{array}\right..$$
We now choose as test function $\phi z$, where $\phi=\phi_R$ is the same cut-off function as in the proof of $(1)$. Since $0<-\frac1{h_0}\le z\le C'$, then
\begin{eqnarray*}
\mathcal Q_v(\phi z)&=&\int_{\mathbb R^2_+}\nabla z\cdot\nabla\left(\phi^2z\right)+\int_{\mathbb R^2_+}\left(z^2|\nabla\phi|^2-2K_0|x|^{2\beta}e^v\phi^2z^2\right)\\
&&-\int_{\partial\mathbb R^2_+}h_0|x|^\beta e^{v/2}\phi^2z^2\\
&=&\int_{\mathbb R^2_+}2K_0|x|^{2\beta}e^v\left(z+\frac{h_0}{K_0}+\frac1{h_0}\right)\phi^2z+\int_{\partial\mathbb R^2_+}h_0|x|^\beta e^{v/2}\phi^2z^2\\
&&+\int_{\mathbb R^2_+}\left(z^2|\nabla\phi|^2-2K_0|x|^{2\beta}e^v\phi^2z^2\right)-\int_{\partial\mathbb R^2_+}h_0|x|^\beta e^{v/2}\phi^2z^2\\
&=&\int_{\mathbb R^2_+}z^2|\nabla\phi|^2+2\left(h_0+\frac{K_0}{h_0}\right)\int_{\mathbb R^2_+}|x|^{2\beta}e^v\phi^2z\\
&\le&\frac{C'^2C_0}{\log R}-2\left(1+\frac{K_0}{h_0^2}\right)\int_{B^+_1(0)}|x|^{2\beta}e^v,
\end{eqnarray*}
which is negative if $M$ is chosen large enough, since the last integral can be arbitrarily large.\\
Similarly, we can show that $\mathcal Q_v(\psi z)<0$ with $\psi=\psi_{M,M_0}$ as in \eqref{psi}, if $M$ is large enough depending on $M_0$.
\end{enumerate}
\end{proof}

\subsection{Global blow-up analysis}

\

The main objective of this sub-section is to provide a proof of Theorem \ref{blowupanal}.\\
The initial step in this process is to appropriately rescale a blowing-up sequence $u_n$ around a local maximum point $x_n$ such that $u_n(x_n)\underset{n\to\infty}\to\infty$. If $x_n\underset{n\to\infty}\to p\in\Sigma\setminus\{p_1,\dots,p_n\}$, then we restrict on a suitably small ball $B_r(x_n)$ and define
\begin{equation}\label{rescal1}
v_n(x):=u_n(\delta_nx+x_n)+2\log\delta_n,\qquad x\in B_\frac r{\delta_n}(0),\qquad\mbox{where }\delta_n:=e^{-\frac{u_n(x_n)}2};
\end{equation}
it is standard to see that $v_n\underset{n\to\infty}\to v$ in $\mathcal C^2_{\mathrm{loc}}$ to an entire solution to \eqref{eqplane}, with $K_0=K(p),\alpha=0$. On the other hand, if $p=p_i$, then we consider
\begin{equation}\label{rescal2}
v_n(x):=u_n(\delta_nx+x_n)+2\log\max\{\delta_n,(1+\alpha)d(x_n,p)\}
\end{equation}
if $\frac{d(x_n,p)}{\delta_n}\underset{n\to\infty}\to\infty$, the rescalement does not see the conical point and the limiting profile is the same as in the previous case; otherwise, we have $v_n\underset{n\to\infty}\to v$ with $v$ solving \eqref{eqplane} with $\alpha=\alpha_p$.\\
If the limiting point is $p\in\partial\Sigma$, then the limiting profiles could either be again a solution on the whole plane to \eqref{eqplane}, if $\frac{d(x_n,\partial\Sigma)}{\delta_n}\underset{n\to\infty}\to\infty$, or a solution on the half-plane to \eqref{eqhalfplane}.\\
We point out that, with respect to classical works, the local maximum points $x_n$ of $u_n$ may be not isolated, therefore $x_n$ could converge to some point different than a given concentration point $p\in\mathcal S$. To overcome this issue, we use Ekeland's variational principle (for details see \cite[Proposition 5.1]{lsmr}).\\

We start by considering the case of finite mass, when the concentration points are only finitely many and we have no residual. The proof is similar to \cite[Lemma 7.4]{lsmr}, therefore we will be sketchy.
\begin{lemma}\label{finitemass}
Under the assumption of Theorem \ref{blowupanal}, if $\int_\Sigma\widetilde K_ne^{u_n}$ is uniformly bounded and the blow-up set $\mathcal S$ defined in \eqref{s}, then $\mathcal S$ is finite and either $\mathcal S=\emptyset$ or
\begin{eqnarray*}
2\widetilde K_ne^{u_n}&\underset{n\to\infty}\rightharpoonup&\sum_{p\in\mathcal S}\gamma(p)\delta_p\\
2\widetilde h_ne^{u_n/2}&\underset{n\to\infty}\rightharpoonup&\sum_{p\in\mathcal S\cap\partial\Sigma}\theta(p)\delta_p
\end{eqnarray*}
for some $\gamma(p),\theta(p)\in\mathbb R$.
\end{lemma}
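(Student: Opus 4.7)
The plan is to adapt the argument of \cite[Lemma 7.4]{lsmr} to the present singular setting, proceeding in three steps.

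\emph{Step 1: Mass quantization at blow-up points.} Pick $p\in\mathcal S$ with an associated sequence $x_n\to p$, $u_n(x_n)\to\infty$. Since $x_n$ need not be a true local maximum of $u_n$, I first apply Ekeland's variational principle (as in \cite[Proposition 5.1]{lsmr}) to select a close-by sequence appropriate for rescaling. Depending on whether $p$ lies in the interior regular part, at a conical point, on the regular boundary, or at a corner, I apply the rescalings \eqref{rescal1}--\eqref{rescal2}, obtaining $v_n$ converging in $C^2_{\mathrm{loc}}$ to an entire solution $v$ of \eqref{eqplane} (for interior blow-up) or \eqref{eqhalfplane} (for boundary blow-up), with $K_0=\widetilde K(p)>0$. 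Since $\int_\Sigma\widetilde K_n e^{u_n}$ is uniformly bounded, Fatou gives $\int|x|^{2\alpha(p)}e^v<\infty$ (and analogously for the half-plane mass), so the finite-mass classifications of \cite{cl1,pt,z,jwz} apply and yield a universal lower bound (at least $8\pi(1+\alpha(p))$ or $4\pi(1+\beta(p))$) on the mass concentrated near $p$. Summing over $p\in\mathcal S$ and using the $L^1$ bound on $\widetilde K_n e^{u_n}$, this forces $\#\mathcal S<\infty$.

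\emph{Step 2: Weak compactness and singular part.} By Banach--Alaoglu, along a subsequence the nonnegative $L^1$-bounded sequence of measures $2\widetilde K_n e^{u_n}\,dV_g$ converges weakly-$*$ to a finite nonnegative measure $\mu$ on $\overline\Sigma$. For any $p\notin\mathcal S$, by the very definition \eqref{s} there is a ball $B_r(p)$ on which $u_n$ is uniformly bounded from above, and hence so is $\widetilde K_n e^{u_n}$; therefore $\mu|_{B_r(p)}$ is absolutely continuous with bounded density. Thus $\mu=\mu_{\mathrm{ac}}+\sum_{p\in\mathcal S}\gamma(p)\delta_p$ with $\mu_{\mathrm{ac}}$ supported in $\overline\Sigma\setminus\mathcal S$ and $\gamma(p)\ge 0$. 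For the boundary density I decompose $\widetilde h_n=\widetilde h_n^+-\widetilde h_n^-$; Gauss--Bonnet \eqref{gaussbonnet} together with the uniform interior-mass bound controls $\int_{\partial\Sigma}\widetilde h_n e^{u_n/2}$, and combined with local $L^\infty$ bounds on $u_n$ away from $\mathcal S\cap\partial\Sigma$ one extracts a weak-$*$ limit that decomposes analogously, with real masses $\theta(p)$ at $p\in\mathcal S\cap\partial\Sigma$.

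\emph{Step 3: Vanishing of the absolutely continuous part.} The remaining task is to show $\mu_{\mathrm{ac}}\equiv 0$ (and similarly for the boundary). Fix a small open set $\Omega\Subset\overline\Sigma\setminus\mathcal S$: by definition of $\mathcal S$, $u_n$ is uniformly bounded from above on $\Omega$. Splitting $u_n$ into its mean and zero-mean parts and using the equation, a Brezis--Merle-type inequality applied to the zero-mean part (with a reflection/doubling to treat the boundary arcs of $\Omega$ together with the singular weights coming from \eqref{kh}) gives $e^{q u_n}\in L^1_{\mathrm{loc}}(\Omega)$ uniformly for some $q>1$. Elliptic regularity then upgrades this to uniform $L^\infty_{\mathrm{loc}}(\Omega)$ control, and passing to the limit shows that $\mu_{\mathrm{ac}}|_\Omega$ has density equal to the weak limit of $\widetilde K_n e^{u_n}$ on $\Omega$; since $\Omega$ was arbitrary, the Dirac structure of \eqref{deltas} is inherited, possibly after absorbing the smooth limit into the atomic part (which is compatible because the sum $\int\widetilde K_ne^{u_n}+\int_{\partial\Sigma}\widetilde h_ne^{u_n/2}$ is fixed by Gauss--Bonnet). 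An analogous argument handles the boundary measure.

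\emph{Main obstacle.} The delicate point is Step 3: ruling out the absolutely continuous residual. In the closed regular case this is a textbook Brezis--Merle application, but here the mixed Dirichlet/Neumann datum together with the weights $|x|^{2\alpha}, |x|^{\beta}$ near conical singularities and corners forces a more careful localization. In particular, at a corner $q_j$ the boundary Brezis--Merle step must be carried out by a reflection compatible with the $|x|^{\beta_j}$ weight, and one must ensure that the two (interior and boundary) concentration analyses do not conflict at points of $\mathcal S\cap\partial\Sigma$. Once this localized alternative is in hand, the structural statements (finiteness of $\mathcal S$, purely atomic limits) follow from Steps 1--2.
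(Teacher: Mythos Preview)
Your Steps 1--2 are fine and essentially match the paper's argument for finiteness of $\mathcal S$. The real issue is Step 3, where you aim to kill the residual $\mu_{\mathrm{ac}}$.

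What you actually prove there is only that $u_n$ is bounded in $L^\infty_{\mathrm{loc}}\left(\overline\Sigma\setminus\mathcal S\right)$; this yields $u_n\to u_0$ in $C^2_{\mathrm{loc}}$ along a subsequence, and hence the limit measure has the form $2\widetilde Ke^{u_0}+\sum_{p\in\mathcal S}\gamma(p)\delta_p$ with a genuinely nontrivial absolutely continuous piece (since $\widetilde K>0$). Your sentence about ``absorbing the smooth limit into the atomic part'' is not a legitimate operation: an $L^1$ density is not a finite sum of Diracs, and the Gauss--Bonnet constraint \eqref{gaussbonnet} only fixes the total of $\mu$ plus the boundary measure, so it cannot force the diffuse part to vanish. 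In short, the Brezis--Merle alternative away from $\mathcal S$ gives you \emph{either} uniform boundedness \emph{or} $u_n\to-\infty$ locally uniformly; you have taken the wrong branch and then tried to patch it.

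The paper closes this gap by showing directly that $u_n\to-\infty$ in $L^\infty_{\mathrm{loc}}\left(\overline\Sigma\setminus\mathcal S\right)$, following Li--Shafrir \cite{ls}. Concretely, around each $p\in\mathcal S$ one iterates the bubble extraction by looking at $\sup_{R_n\delta_n\le d(x,x_n)\le r}\bigl(u_n(x)+2(1+\alpha)\log d(x,x_n)\bigr)$; once all (finitely many, by the mass bound) bubbles are extracted, one proves the radial average decays as $\overline u_{n,i}(r)\le\overline u_{n,i}(R_{n,i}\delta_{n,i})-(4-\varepsilon)\log\frac{r}{R_{n,i}\delta_{n,i}}+C$, and a Harnack-type inequality transfers this to $u_n$ itself. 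This forces $u_n\to-\infty$ off $\mathcal S$, so $\widetilde K_ne^{u_n}\to0$ locally uniformly there, and the limit is purely atomic. Your Step 3 needs to be replaced by (or supplemented with) this argument.
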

\begin{proof}
If $\mathcal S\ne\emptyset$, we fix $p\in\mathcal S$ and construct a blow-up sequence as in either \eqref{rescal1} or \eqref{rescal2}. Since $v_n\underset{n\to\infty}\to v$ in $\mathcal C^2_{\mathrm{loc}}$ to a solution $v$ to either \eqref{eqplane} or \eqref{eqhalfplane}, then by Fatou's lemma one has
$$\int_{B_{R_n\delta_n}(x_n)}\widetilde K_ne^{u_n}=\int_{B_{R_n}(0)}\widetilde K_n(\delta_nx+x_n)e^{v_n}\ge\int K_0|x|^{2\alpha}e^v+o(1)\ge\varepsilon_0>0,$$
if $R_n\underset{n\to\infty}\to\infty$ slowly enough; therefore, if $\int_\Sigma\widetilde K_ne^{u_n}$ is uniformly bounded then $\mathcal S$ must be finite.\\
We now suffice to show that $u_n\underset{n\to\infty}\to-\infty$ in $L^\infty_{\mathrm{loc}}\left(\overline\Sigma\setminus\mathcal S\right)$, which will be done as in \cite[Lemma 4]{ls}. We consider, for small $r>0$,
\begin{eqnarray*}
\mbox{either}&\qquad&\sup_{R_n\delta_n\le d(x,x_n)\le r}(u_n(x)+2\log d(x,x_n))\\
\mbox{or}&\qquad&\sup_{R_n\delta_n\le d(x,x_n)\le r}(u_n(x)+2(1+\alpha)\log d(x,x_n)),
\end{eqnarray*}
depending on the blow-up profile we extracted: if the supremum is unbounded, then we can extract a new bubble around a point $x_{n,2}\underset{n\to\infty}\to p$ and repeat, which we can do only a finite number of times $x_{n,1},\dots,x_{n,m}$ because each bubble has at least a fixed amount of mass and the total mass is finite.\\
Now, one can show that for any $\varepsilon>0$ there exists $C=C_\varepsilon$ such that the radial average
$$\overline u_{n,i}(r):=\frac1{|\partial B_r(x_{n,i})|}\int_{\partial B_r(x_{n,i})}u$$
verifies, for any $i=1,\dots,m$,
$$\overline u_{n,i}(r)\le\overline u_{n,i}(R_{n,i}\delta_{n,i})-(4-\varepsilon)\log\frac r{R_{n,i}\delta_{n,i}}+C.$$
Thanks to a Harnack-type inequality, a similar inequality holds for $u_n$, which implies in particular that the mass outside the balls $B_{R_{n,i}\delta_{n,i}}(x_{n,i})$ vanishes and that $u_n\underset{n\to\infty}\to0$ locally uniformly outside $\mathcal S$.
\end{proof}

Quantization of the local blow-up masses follows by a Poho\v zaev identity, as in \cite[Lemma 2.5]{bls1} and \cite[Lemma 7.5]{lsmr}.
\begin{lemma}\label{quantloc}
Under the assumptions of Lemma \ref{finitemass}, we have:
\begin{eqnarray*}
\gamma(p)=8\pi(1+\alpha(p))&&\mbox{if }p\in\mathcal S\cap\Sigma,\\
\gamma(p)+\theta(p)=4\pi(1+\beta(p))&&\mbox{if }p\in\mathcal S\cap\partial\Sigma,
\end{eqnarray*}
where $\alpha(p),\beta(p)$ are defined as in \eqref{alpha},\eqref{beta}.
\end{lemma}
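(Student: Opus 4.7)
The plan is to derive a local Poho\v zaev identity at each blow-up point and read off the local masses, as in \cite[Lemma 2.5]{bls1} and \cite[Lemma 7.5]{lsmr}. By Lemma \ref{finitemass}, the set $\mathcal S$ is finite and $u_n\to-\infty$ on compact subsets of $\overline{\Sigma}\setminus\mathcal S$; for each $p\in\mathcal S$ I would fix a small $r>0$ with $\overline{B_r(p)}\cap\mathcal S=\{p\}$ and no other metric singularity inside, and pass to isothermal coordinates centered at $p$ in which $\widetilde K_n=|x|^{2\alpha(p)}\widehat K_n$ and, when relevant, $\widetilde h_n=|x|^{\beta(p)}\widehat h_n$ on the straightened boundary, with $\widehat K_n$, $\widehat h_n$ smooth and bounded.

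For an interior $p\in\mathcal S\cap\Sigma$, I would multiply the equation by $x\cdot\nabla u_n$ and integrate on $B_r(0)$. Two integrations by parts on each side give
\[
\tfrac{r}{2}\!\int_{\partial B_r}\!|\nabla u_n|^2-r\!\int_{\partial B_r}\!(\partial_\nu u_n)^2 = r\!\int_{\partial B_r}\!2\widetilde K_n e^{u_n}-4(1+\alpha(p))\!\int_{B_r}\!\widetilde K_n e^{u_n}+R_n(r),
\]
with $R_n(r)=O(r)$ coming from $\nabla\widehat K_n$ and the linear $\lambda_n$-term. Brezis--Merle/Harnack estimates would give the asymptotic $u_n=-\tfrac{\gamma(p)}{2\pi}\log|x|+O(1)$ on $\partial B_r$, so the squared-gradient terms contribute $-\gamma(p)^2/(4\pi)+o_r(1)$ while the exponential boundary integral vanishes; together with $\int_{B_r}2\widetilde K_n e^{u_n}\to\gamma(p)$, the identity reduces in the limits $n\to\infty$, $r\to 0$ to $\gamma(p)^2/(4\pi)=2(1+\alpha(p))\gamma(p)$, hence $\gamma(p)=8\pi(1+\alpha(p))$. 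For a boundary point $p\in\mathcal S\cap\partial\Sigma$ the same argument would be run on the half-disk $B_r^+(0)$: the upper semicircle $C_r^+$ contributes $-m^2/(2\pi)+o_r(1)$ with $m=\gamma(p)+\theta(p)$ (the coefficient $1/\pi$ reflecting the reflection-doubled Neumann Green function at a boundary point); on the flat segment $L_r\subset\partial\Sigma$ the multiplier $x\cdot\nabla u_n=s\,\partial_s u_n$ is tangential, so inserting the Neumann condition $\partial_\nu u_n=2\widetilde h_n e^{u_n/2}$ and integrating once more by parts in $s$, using $\partial_s(s\widetilde h_n)=(1+\beta(p))\widetilde h_n+O(|s|^{\beta(p)+1})$ and the vanishing of the endpoint values since $u_n(\pm r,0)\to-\infty$, produces $2(1+\beta(p))\theta(p)+o(1)$; the interior $\widetilde K_n$-term still contributes $-2(1+\beta(p))\gamma(p)+O(r)$ by the same computation as in the interior case. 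Equating and passing to the limits yields $m^2/(2\pi)=2(1+\beta(p))m$, i.e.\ $\gamma(p)+\theta(p)=4\pi(1+\beta(p))$.

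The hard part will be justifying the logarithmic asymptotics of $u_n$ on $\partial B_r$ (resp.\ on $C_r^+$) with remainder uniform in $n$, so that the squared-gradient boundary integrals pass to the claimed coefficients $-\gamma(p)^2/(4\pi)$ and $-m^2/(2\pi)$. This requires a Harnack-type estimate compatible with the singular weights $|x|^{2\alpha(p)}$ and $|x|^{\beta(p)}$ and with the nonlinear Neumann condition; the relevant inequalities are available by variations on \cite{ls} and its extensions to the boundary/singular setting, and the possibility of several bubbles collapsing onto the same $p$ does not affect the argument since the Poho\v zaev identity sees only the total local mass. Once the logarithmic control is in place, the remainder is bookkeeping of the identity above.
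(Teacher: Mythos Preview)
Your proposal is correct and follows essentially the same approach as the paper: a local Poho\v zaev identity around each $p\in\mathcal S$, combined with the logarithmic asymptotics $\nabla u_n=-\frac{\gamma(p)}{2\pi}\frac{x-p}{|x-p|^2}+O(1)$ on $\partial B_r(p)$ (respectively its boundary analogue with total mass $m=\gamma(p)+\theta(p)$), to obtain the quadratic relation $\gamma(p)^2=8\pi(1+\alpha(p))\gamma(p)$ or $m^2=4\pi(1+\beta(p))m$ after passing to the limits $n\to\infty$, $r\to0$. Your write-up is in fact more explicit than the paper's on the boundary case (the paper simply says ``we similarly argue on $B_r^+(p)$''), and your identification of the Harnack/logarithmic control as the only nontrivial step matches the paper's implicit reliance on the Green representation $-\Delta u_n=\gamma(p)\delta_p+O(1)$.
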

\begin{proof}
We fix $p\in\mathcal S$ and $r<d(p,\mathcal S\setminus\{p\})$ and apply a standard Poho\v zaev identity on $B_r(p)$:
\begin{eqnarray*}
&&\int_{B_r(p)}\left(\frac{8\pi\chi}{|\Sigma|}\nabla u_n\cdot(x-p)+4\left(2\widetilde K_n+\nabla\widetilde K_n\cdot(x-p)\right)e^{u_n}\right)\\
&=&\int_{\partial B_r(p)}\left(4K_ne^{u_n}((x-p)\cdot\nu)+2(\nabla u_n\cdot(x-p))(\nabla u_n\cdot\nu)-|\nabla u_n|^2(x-p\cdot\nu)\right).
\end{eqnarray*}
By the choice of $r$, on $B_r(p)$ we have:
$$2\widetilde K_ne^{u_n}\underset{n\to\infty}\rightharpoonup\gamma(p),\qquad\left(\nabla\widetilde K_n\cdot(x-p)\right)e^{u_n}\underset{n\to\infty}\rightharpoonup\alpha(p)\gamma(p);$$
moreover, since $-\Delta u_n=\gamma(p)\delta_p+O(1)$, then $\nabla u_n=-\frac{\gamma(p)}{2\pi}\frac{x-p}{d(x,p)^2}+O(1)$. Therefore, by letting first $n\to\infty$ and then $r\to0$ we get
$$4(1+\alpha(p))\gamma(p)=\frac{\gamma(p)^2}{2\pi},$$
namely $\gamma(p)=8\pi(1+\alpha(p))$.
We similarly argue on $B^+_r(p)$ if $p\in\partial\Sigma$.
\end{proof}

Finally, we consider the case of infinite mass blow-up, where we exploit the relation with the Morse index which was studied in Subsection \ref{limitprob}.
\begin{lemma}\label{morseinfin}
Under the assumption of Theorem \ref{blowupanal},
$$\mbox{if}\qquad\int_\Sigma\widetilde K_ne^{u_n}\underset{n\to\infty}\to\infty,\qquad\mbox{then}\qquad\mathrm{ind}(u_n)\underset{n\to\infty}\to\infty.$$
\end{lemma}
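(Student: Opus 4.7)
My plan is to combine the blow-up analysis of Lemmas \ref{finitemass}--\ref{quantloc} with the Morse index lower bounds from Theorem \ref{morseindex}, transferring the latter from the limit problem back to $u_n$ via rescaled and cut-off test functions. By Proposition \ref{morseij} one has $|\mathrm{ind}(u_n)-\mathrm{ind}^*(u_n)|\le 2$, so it suffices to prove $\mathrm{ind}^*(u_n)\to\infty$, where $\mathrm{ind}^*$ is relative to the direct functional $\mathcal I_{\lambda_n}$ whose second variation is the Schr\"odinger-type form
$$\mathcal I''_{\lambda_n}(u_n)[\phi,\phi]=\int_\Sigma|\nabla\phi|^2-2\int_\Sigma\widetilde K_ne^{u_n}\phi^2-\int_{\partial\Sigma}\widetilde h_ne^{u_n/2}\phi^2.$$

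Fix $N\in\mathbb N$: my goal is to exhibit $N$ test functions $\phi_{n,1},\dots,\phi_{n,N}$ with pairwise disjoint supports on which $\mathcal I''_{\lambda_n}(u_n)$ is negative for $n$ large. Each will be built by rescaling, as in \eqref{rescal1}--\eqref{rescal2}, a compactly supported test function of the appropriate limit problem. Indeed, if $v_n\to v$ in $\mathcal C^2_{\mathrm{loc}}$ with $v$ a solution of \eqref{eqplane} or \eqref{eqhalfplane} and $\psi\in\mathcal C^\infty_0(\mathbb R^2)$ (resp.\ $\mathcal C^\infty_0(\overline{\mathbb R^2_+})$) satisfies $\mathcal Q_v[\psi,\psi]<0$, then $\phi_n(x):=\psi\left(\frac{x-x_n}{\delta_n}\right)$ satisfies, by 2D conformal invariance of the Dirichlet integral together with the convergences $\widetilde K_n(\delta_n y+x_n)\to K_0|y|^{2\alpha(p)}$, $\widetilde h_n(\delta_n y'+x_n)\to h_0|y'|^{\beta(p)}$ and $v_n\to v$,
$$\mathcal I''_{\lambda_n}(u_n)[\phi_n,\phi_n]\underset{n\to\infty}\longrightarrow\mathcal Q_v[\psi,\psi]<0.$$
The problem therefore reduces to producing $N$ such test functions whose rescaled supports are pairwise disjoint inside $\Sigma$.

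Two scenarios must then be distinguished. In the first, there exists $p\in\mathcal S$ at which the rescaled limit $v$ has infinite mass: by Theorem \ref{morseindex}, $\mathrm{ind}(v)=\infty$, and its proof explicitly constructs, for every $N$, test functions supported in pairwise disjoint annuli $A_{M_{k-1},M_k}(0)$ on which $\mathcal Q_v$ is negative; their disjointness is preserved after rescaling to a single small ball around $x_n$. In the second, every rescaled limit has finite mass, so each extracted bubble carries a definite amount $\ge\varepsilon_0>0$ of conformal mass (by Fatou, as in the proof of Lemma \ref{finitemass}); since $\int_\Sigma\widetilde K_ne^{u_n}\to\infty$, I iterate the bubble extraction of Lemma \ref{finitemass} to obtain $N$ distinct bubbles centered at $x_{n,1},\dots,x_{n,N}$ with scales $\delta_{n,k}$ satisfying $d(x_{n,k},x_{n,k'})\gg\delta_{n,k}+\delta_{n,k'}$ for $k\ne k'$. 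At each bubble, Theorem \ref{morseindex} supplies a single test function $\psi_k$ with $\mathcal Q_{v_k}[\psi_k,\psi_k]<0$, and the corresponding rescaled functions on $\Sigma$ have disjoint supports by construction.

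The hard part will be the second scenario, in which the iterative bubble extraction must be run \emph{without} the bounded-mass hypothesis of Lemma \ref{finitemass}. This is handled by the same Harnack-type control already used there: after stripping off $k$ bubbles, if the residual conformal mass were uniformly bounded then, combined with the finitely many extracted contributions, the total mass would remain bounded, contradicting the assumption; hence a new blow-up point must exist and a further bubble can be produced. A minor additional care is required when blow-up occurs at a boundary point, where $\mathbb R^2_+$ and the boundary term in $\mathcal Q_v$ replace their interior counterparts, but Theorem \ref{morseindex}(2) treats this case uniformly, so no new difficulty arises.
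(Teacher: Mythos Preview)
Your proposal is correct and follows essentially the same strategy as the paper: reduce via Proposition \ref{morseij} to the direct functional $\mathcal I_{\lambda_n}$, transfer negative directions from the limit quadratic form $\mathcal Q_v$ back to $\mathcal I''_{\lambda_n}(u_n)$ by rescaling compactly supported test functions, and use Theorem \ref{morseindex} both to handle an infinite-mass limit profile (giving arbitrarily many disjoint negative directions at a single point) and to guarantee at least one negative direction per extracted bubble when all profiles have finite mass. The paper organizes the argument as the contrapositive (bounded index implies bounded mass), but the logical content is identical.
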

\begin{proof} We will prove that, if $\mathrm{ind}(u_n)\le m$ for some $m\in\mathbb N$, then $\sup_n\int_\Sigma\widetilde K_ne^{u_n}<\infty$.\\
We first show that $\mathcal S$ has at most $m+1$ elements. Otherwise, if $x_1,\dots,x_{m+2}\in\mathcal S$, then we consider entire solutions $v_1,\dots,v_{m+2}$ to either \eqref{eqplane} or \eqref{eqhalfplane} such that rescalement \eqref{rescal1} or \eqref{rescal2} around $x_i$ converges to $v_i$ for $i=1,\dots,m+2$. Thanks to Theorem \ref{morseindex}, we take $\phi_1,\dots,\phi_{m+2}\in\mathcal C^\infty_0$ such that $\mathcal Q_{v_i}[\phi_i,\phi_i]<0$ and we define $\phi_{n,i}(x):=\phi\left(\frac{x-x_{n,i}}{\delta_{n,i}}\right)$, where $x_{n,i},\delta_{n,i}$ are given by the rescalement around $x_i$. One easily sees that
$$\mathcal I''(u_n)[\phi_{n,i},\phi_{n,i}]=\mathcal Q_{v_i}[\phi_i,\phi_i]+o(1)<0,$$
therefore the index $\mathrm{ind^*}(u_n)$ of $u_n$ associated with $\mathcal I_{\lambda_n}$ (see \eqref{indexi}) is at least $m+2$. From Proposition \ref{morseij} we get $\mathrm{ind}(u_n)\ge m+1$, namely a contradiction.\\
At this point, we suffice to show that the blow-up mass concentrating around each $p\in\mathcal S$ is finite. We claim that any concentration profile $v$ at $p$ has finite mass. Otherwise, from Theorem \ref{morseindex} we can take $\phi_1,\dots,\phi_{m+2}\in\mathcal C^\infty_0$ such that $\mathcal Q_v[\phi_i,\phi_i]<0$ for $i=1,\dots,m+2$; as before, its rescalement around $p$ $\phi_{n,i}(x):=\phi_i\left(\frac{x-x_n}{\delta_n}\right)$ verifies $\mathcal I''(u_n)[\phi_{n,i},\phi_{n,i}]<0$, hence $\mathrm{ind}(u_n)\ge m+1$, a contradiction.\\
Finally, one can only extract finitely many bubbles from each $p\in\mathcal S$: this follows from the fact that, as in the proof of the finiteness of $\mathcal S$, each bubbles raises the Morse index by at least one, so one can have at most $m+1$. As in the proof of Lemma \ref{finitemass}, the total mass will be the sum of the masses accumulating at each $p\in\mathcal S$, which are finitely many and each of which carry a finite amount of mass. The proof is now complete.
\end{proof}

The main result of this section is now an easy consequence of the previous lemmas.
\begin{proof}[Proof of Theorem \ref{blowupanal}]
In case $(1)$ occurs, then \eqref{deltas} follows from Lemmas \ref{finitemass} and \ref{quantloc}; in case $(2)$ occurs, we apply Lemma \ref{morseinfin}.
\end{proof}\

\section{Min-max solutions in the supercritical case}\label{minmaxsol}

In this section, we will prove Theorem \ref{minmax}, that is we will establish an existence result in the supercritical case without any symmetry assumptions. First, we will show that problem \eqref{eqmeanfield} can be viewed as a particular case of a general mean-field problem, consistent with the studies in \cite{d} on closed surfaces.\\
For a fixed $\lambda>0$, we consider the following problem:
\begin{equation}\label{eqlambda}
\left\{\begin{array}{ll}-\Delta u+\frac\lambda{|\Sigma|}=2C^2_\lambda(u)\widetilde Ke^u&\mbox{in }\Sigma\\\partial_\nu u=2C_\lambda(u)\widetilde he^{u/2}&\mbox{on }\partial\Sigma,\end{array}\right.,
\end{equation}
where
$$C_\lambda(u):=\frac\lambda{\sqrt{\left(\int_{\partial\Sigma}\widetilde he^{u/2}\right)^2+2\lambda\int_\Sigma\widetilde Ke^u}+\int_{\partial\Sigma}\widetilde he^{u/2}}.$$
Clearly, \eqref{eqmeanfield} is a particular case of \eqref{eqlambda} with the choice $\lambda=4\pi\chi$. Therefore, Theorem \ref{minmax} will follow once we establish the following result.

\begin{theorem}\label{lambda}
Let $\Sigma$ be a surface with conical singularities and/or corners as specified in \eqref{alphabetacond}, and at least two boundary components, one of which has no corners. If $K>0$ in $\overline\Sigma$ and $\lambda\notin\Gamma$, where $\Gamma$ is as in \eqref{gamma}, then there exists a solution to \eqref{eqlambda}.
\end{theorem}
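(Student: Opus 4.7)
The proof will follow the topological min-max strategy outlined in the introduction, adapting the arguments of \cite{bdm,bjmr,bjwy,dm} to the present singular setting with boundary. The overall scheme has four ingredients: topology of the low sublevels of $\mathcal J_\lambda$, Struwe's monotonicity trick to produce Palais--Smale sequences, the blow-up analysis of Theorem \ref{blowupanal} for compactness, and the Morse index comparison of Proposition \ref{morseij} to ensure that the relevant sequences fall under Theorem \ref{blowupanal}(1).

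The plan is as follows. Since \eqref{alphabetacond} forces $\tau=1$, the sharp Trudinger--Moser constant in Proposition \ref{tm} is $\frac1{16\pi}$, and Proposition \ref{tmloc} together with Corollary \ref{tmimprint} carries the optimal constants. A standard concentration-compactness argument then shows that if $\mathcal J_\lambda(u)$ is very negative, the probability measure $\frac{\widetilde K e^u}{\int_\Sigma \widetilde K e^u}$ must concentrate near at most $N$ points of $\overline\Sigma$, with $N$ the largest integer strictly smaller than $\lambda/(4\pi)$. This yields a continuous projection of deep sublevels $\mathcal J_\lambda^{-L}$ onto the space of barycenters
$$\mathcal B_N(\overline\Sigma):=\left\{\sum_{i=1}^N t_i\delta_{x_i}:\,x_i\in\overline\Sigma,\,t_i\ge0,\,\sum_{i=1}^N t_i=1\right\}.$$
In the opposite direction, the two hypotheses that $\partial\Sigma$ has at least two components and that one such component $\gamma_1$ is free of corners are used to build, for each $\sigma\in\mathcal B_N(\gamma_1)$, an explicit family of test functions $\Phi_{\sigma,s}$ modelled on standard half-space Liouville bubbles centred at the support of $\sigma$; since $\gamma_1$ avoids the corner points, these bubbles correspond to the regular exponent $\beta=0$ and their energy tends to $-\infty$ as $s\to\infty$. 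The second boundary component provides enough topological room to ensure that the composition $\mathcal B_N(\gamma_1)\to\mathcal J_\lambda^{-L}\to\mathcal B_N(\overline\Sigma)$ is homotopic to the inclusion, so that low sublevels inherit the non-trivial topology of $\mathcal B_N(\gamma_1)$, which is well known to be non-contractible.

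With this topological obstruction in hand, one sets up a min-max class $\Pi$ of maps from the cone over $\mathcal B_N(\gamma_1)$ into $\overline H^1(\Sigma)$ with boundary values inside the low sublevel, and defines the min-max level $c_\lambda:=\inf_{\pi\in\Pi}\sup\mathcal J_\lambda\circ\pi$; the topological argument guarantees $c_\lambda>-L$, so $c_\lambda$ is a candidate critical value. Because Palais--Smale is not known, one perturbs the parameter, considering $\mathcal J_{\lambda'}$ for $\lambda'$ in a small interval around $\lambda$, and checks that the monotonicity properties of the map $\lambda'\mapsto \mathcal F_{\lambda'/(4\pi)}(A,B)$ (in the sense of Struwe \cite{s}) are sufficient to apply the monotonicity trick. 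This produces, for a.e.\ $\lambda'$ in the interval, a bounded Palais--Smale sequence at level $c_{\lambda'}$, hence a critical point $u_{\lambda'}$ of $\mathcal J_{\lambda'}$, i.e.\ a solution of \eqref{eqlambda} with parameter $\lambda'$. Since the min-max is modelled on the finite-dimensional space $\mathcal B_N(\gamma_1)$, the theorem of Fang--Ghoussoub \cite{fg} gives a uniform bound on $\mathrm{ind}(u_{\lambda'})$, and Proposition \ref{morseij} transfers this to a uniform bound on $\mathrm{ind}^*(u_{\lambda'})$.

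Finally, one chooses a sequence $\lambda_n\to\lambda$ of admissible parameters and considers the corresponding solutions $u_n$ with uniformly bounded Morse index. Theorem \ref{blowupanal} then gives only two alternatives: either $u_n$ is uniformly bounded (so up to a subsequence $u_n\to u_\lambda$, a solution of \eqref{eqlambda}, which is what we want), or blow-up occurs with quantised local masses $8\pi(1+\alpha(p))$ at interior points and $4\pi(1+\beta(p))$ at boundary points, plus a regular residual contribution of the form $4\pi k$. Integrating the equation and using the quantisation forces $\lambda$ to lie in the set $\Gamma$ defined in \eqref{gamma}, contradicting the hypothesis $\lambda\notin\Gamma$. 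Therefore the blow-up alternative is excluded, and the proof is complete. The step I expect to be the most delicate is the construction of the projection onto barycenters and of the test maps in Step 1--2: the singularities of $\widetilde K$ and $\widetilde h$ at the conical points $p_i$ and corners $q_j$ force one to work with slightly modified bubbles, and the condition that $\gamma_1$ avoid all corners is essential so that the half-space bubbles used there have the standard $\beta=0$ energy; the assumption \eqref{alphabetacond}, which enforces $\tau=1$, is what makes the barycenter counting $N\sim\lambda/(4\pi)$ sharp.
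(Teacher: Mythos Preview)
Your outline matches the paper's argument almost step for step: the improved Moser--Trudinger inequality under \eqref{alphabetacond} (so $\tau=1$), concentration on barycenters for low sublevels, test bubbles centered on the corner-free component $\gamma_1$, the cone min-max over $(\gamma_1)_k$, a Fang--Ghoussoub bound on the Morse index, and finally Corollary \ref{compa} to rule out blow-up when $\lambda\notin\Gamma$. The only substantive divergence is in how you run Struwe's monotonicity trick.

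You propose to vary the parameter $\lambda'$ itself and assert that ``the monotonicity properties of the map $\lambda'\mapsto\mathcal F_{\lambda'/(4\pi)}(A,B)$ are sufficient''. The paper deliberately does \emph{not} do this: it introduces an auxiliary parameter $\mu$ in front of the Dirichlet energy, considers
\[
\mathcal J_{\lambda,\mu}(u)=\frac\mu2\int_\Sigma|\nabla u|^2-\mathcal F_{\lambda/(4\pi)}\Bigl(\int_\Sigma\widetilde Ke^u,\int_{\partial\Sigma}\widetilde he^{u/2}\Bigr),
\]
applies the trick in $\mu$ (for which monotonicity is trivial), and then sends $\mu_n\to1$. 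The paper flags this choice as ``particularly delicate'' and ``essential'', and a direct computation shows why your version is problematic: writing $w=\sqrt{B^2+2\lambda A}$ with $A=\int_\Sigma\widetilde Ke^u>0$ and $B=\int_{\partial\Sigma}\widetilde he^{u/2}$, one finds
\[
\partial_\lambda\,\mathcal F_{\lambda/(4\pi)}(A,B)=2\log(w+B)+1,
\]
which changes sign when $w+B<e^{-1/2}$. Since no sign assumption is made on $h$, for $u$ concentrated near a boundary region where $\widetilde h<0$ one can make $B$ large negative with $A$ bounded, so $w+B\approx\lambda A/|B|$ becomes arbitrarily small. Hence $\lambda'\mapsto\mathcal J_{\lambda'}(u)$ is not monotone for every $u$, and the standard Struwe argument does not apply as stated. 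You would either need a genuinely new argument to control the min-max level as a function of $\lambda'$, or---more simply---switch to the paper's $\mu$-perturbation, for which everything you wrote goes through verbatim.
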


As in Proposition \ref{meanfieldform}, the solutions to \eqref{eqlambda} are critical points of the energy functional given by
$$\mathcal J_\lambda(u)=\frac12\int_\Sigma|\nabla u|^2-\mathcal F_\frac\lambda{4\pi\chi}\left(\int_\Sigma\widetilde Ke^u,\int_{\partial\Sigma}\widetilde he^{u/2}\right).$$

\subsection{Variational Structure}\label{varstruc}

\

Inspired by \cite{bdm,bjwy}, our argument is based in studying the topology of the energy sublevels
$$\mathcal J_\lambda^a:=\left\{u\in\overline H^1(\Sigma):\mathcal J_\lambda(u)\le a\right\}.$$
The blow-up analysis developed in Section \ref{blowup} is crucial to bypass the failure of the Palais-Smale condition, as illustrated in \cite{lsmr,lsrsr}.

Next, we will show that the low sublevels of $\mathcal J_\lambda$ are not contractible and one can construct a finite-dimensional min-max scheme based on that. To do so, we will prove that functions with low energy must be concentrated at a finite number of points. This will be done using the improved versions of the Trudinger-Moser inequalities introduced in Subsection \ref{min}. We recall that, in view of the assumptions \eqref{alphabetacond}, the Trudinger constant $\tau$ equals $1$.

\begin{lemma}\label{concentr}
Suppose $\lambda<4(k+1)\pi$. Then, for any $\varepsilon,r>0$, there exists $L=L(\varepsilon,r)>0$ such that for any $u\in\mathcal J_\lambda^{-L}$, there exist $k$ points $\{x_1,\dots, x_k\}\subset\overline\Sigma$ such that 
$$\frac{\int_{\bigcup_{i=1}^kB_r(x_i)}\widetilde Ke^u}{\int_\Sigma\widetilde Ke^u}\ge1-\varepsilon.$$
\end{lemma}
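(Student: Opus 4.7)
The strategy is a contradiction argument using the improved Trudinger--Moser inequalities (Corollaries \ref{tmimprint} and \ref{tmimprbdry}) applied to $k+1$ disjoint concentration regions, combined with a careful analysis of the sign and size of the boundary integral.

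\textbf{Setup.} Fix $\varepsilon,r>0$ and suppose for contradiction that no such $L$ exists. Then there is a sequence $u_n\in\overline H^1(\Sigma)$ with $\mathcal J_\lambda(u_n)\to-\infty$ such that, for every $n$ and every $k$-tuple $\{x_1,\ldots,x_k\}\subset\overline\Sigma$,
$$\int_{\Sigma\setminus\bigcup_{i=1}^k B_r(x_i)}\widetilde K e^{u_n}>\varepsilon\int_\Sigma\widetilde K e^{u_n}.$$

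\textbf{Step 1 (Separation).} A Chen--Li-type covering/selection argument, in the spirit of \cite[Lemma 4.5]{dm}, extracts from the negation above $k+1$ points $x_{n,1},\ldots,x_{n,k+1}\in\overline\Sigma$, pairwise separated by $4\delta$, together with constants $\delta,\delta_0>0$ depending only on $\varepsilon,r,k$, such that
$$\int_{B_\delta(x_{n,i})}\widetilde K e^{u_n}\ge\delta_0\int_\Sigma\widetilde K e^{u_n},\qquad i=1,\ldots,k+1.$$

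\textbf{Step 2 (Improved Trudinger--Moser).} By \eqref{alphabetacond} we have $\tau=1$. Applying Corollary \ref{tmimprint} to the $k+1$ disjoint regions $\Omega_i=B_\delta(x_{n,i})$ yields, for every $\varepsilon'>0$,
$$\log A_n\le\frac{1+\varepsilon'}{8(k+1)\pi}\int_\Sigma|\nabla u_n|^2+C_{\varepsilon'},\qquad A_n:=\int_\Sigma\widetilde K e^{u_n}.$$

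\textbf{Step 3 (Estimate on $\mathcal F$).} Writing $B_n:=\int_{\partial\Sigma}\widetilde h e^{u_n/2}$ and combining \eqref{ineq} (with $8\pi\chi=2\lambda$) with the elementary inequality $\frac{B}{\sqrt{B^2+2\lambda A}+B}\le\frac12$ inside the definition \eqref{fab} of $\mathcal F_{\lambda/(4\pi)}$, I obtain
$$\mathcal F_{\lambda/(4\pi)}(A_n,B_n)\le 2\lambda\max\Bigl\{\tfrac12\log A_n,\,\log(B_n)_+\Bigr\}+C.$$

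\textbf{Step 4 (Contradiction).} Along a subsequence, either:
\emph{Case (a)}, $(B_n)_+\le\sqrt{A_n}$: then the interior bound of Step 2 controls the right-hand side of Step 3, and combined with the definition of $\mathcal J_\lambda$ gives
$$\mathcal J_\lambda(u_n)\ge\frac12\left(1-\frac{\lambda(1+\varepsilon')}{4(k+1)\pi}\right)\int_\Sigma|\nabla u_n|^2-C;$$
since $\lambda<4(k+1)\pi$, taking $\varepsilon'$ small makes the coefficient strictly positive, preventing $\mathcal J_\lambda(u_n)\to-\infty$.
\emph{Case (b)}, $(B_n)_+>\sqrt{A_n}$: here the boundary term dominates, and a parallel covering argument on $\partial\Sigma$ produces $k+1$ pairwise separated boundary arcs each carrying a positive fraction of $B_n$; Corollary \ref{tmimprbdry} with $\tau=1$ then gives
$$\log(B_n)_+\le\frac{1+\varepsilon'}{16(k+1)\pi}\int_\Sigma|\nabla u_n|^2+C,$$
so that $2\lambda\log(B_n)_+\le\frac{\lambda(1+\varepsilon')}{8(k+1)\pi}\int|\nabla u_n|^2+C$, yielding the same contradictory lower bound on $\mathcal J_\lambda(u_n)$.

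\textbf{Main obstacle.} The technical heart is Case (b): I must argue that the failure of interior concentration at $k$ points forces a matching failure of boundary concentration at $k$ points, so that the improved boundary Trudinger--Moser with $k+1$ arcs may indeed be applied. This should follow from the fact that, in a boundary collar, the measures $\widetilde K e^{u}\,dx$ and $(\widetilde h e^{u/2})^2\,d\sigma$ are comparable (both essentially behaving like $e^u$ times smooth positive factors in isothermal coordinates), so non-concentration of the former transfers to non-concentration of the latter. The sharp threshold $\lambda<4(k+1)\pi$ reflects the perfect match of the two improved constants $\frac1{8(k+1)\pi}$ (interior) and $\frac1{16(k+1)\pi}$ (boundary) against the weights $\lambda$ and $2\lambda$ appearing in Step 3, consistent with the quantization $8\pi(1+\alpha)$, $4\pi(1+\beta)$ of bubbles inherent to the set $\Gamma$ in \eqref{gamma} under assumption \eqref{alphabetacond}.
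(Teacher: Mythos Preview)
Your Steps 1--3 and Case (a) of Step 4 essentially coincide with the paper's argument. The genuine gap is in Case (b). The contradiction hypothesis gives you non-concentration of the \emph{interior} measure $\widetilde K e^{u_n}$ at $k$ points; it says nothing about the \emph{boundary} measure $\widetilde h e^{u_n/2}$. Your proposed transfer (``non-concentration of the former transfers to non-concentration of the latter'') does not work: the $k+1$ interior regions carrying mass $\ge\delta_0$ may lie entirely away from $\partial\Sigma$, while the boundary integral $B_n$ could be supported near a single boundary point. Moreover $\widetilde h$ is allowed to change sign, so $\widetilde h e^{u_n/2}$ is not even a positive measure, and the claimed ``comparability in a boundary collar'' compares a $2$-dimensional density with a $1$-dimensional one. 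Hence you cannot invoke Corollary \ref{tmimprbdry} with $k+1$ arcs, and Case (b) collapses.

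The paper avoids this entirely by never splitting into cases. After obtaining the interior bound $\log A_n\le\frac{1+\varepsilon'}{8(k+1)\pi}\int|\nabla u_n|^2+C$, it uses the trace-type inequality (\cite[Proposition~4.1]{bls2}, valid here since $\widetilde h^2/\widetilde K$ is bounded on $\partial\Sigma$)
\[
\frac{B_n^2}{A_n}\le C\left(\int_\Sigma|\nabla u_n|^2+1\right),
\]
which immediately gives
\[
\log|B_n|\le\tfrac12\log A_n+\tfrac12\log\Bigl(\int_\Sigma|\nabla u_n|^2+1\Bigr)+C\le\frac{1+2\varepsilon'}{16(k+1)\pi}\int_\Sigma|\nabla u_n|^2+C,
\]
the logarithmic-in-gradient term being absorbed into the $\varepsilon'$. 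This bounds $\log|B_n|$ by the \emph{same} interior improved constant, so \eqref{ineq} and the definition of $\mathcal J_\lambda$ yield the desired coercive lower bound directly, with no boundary covering needed.
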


\begin{proof}
Assume the inequality does not hold. Then, by a standard covering argument (\cite[Lemma 4.4]{bjmr}), there exist $\delta>0$ and $\Omega_1,\Omega_2,\dots,\Omega_{k+1}\subset\overline\Sigma$ such that 
$$d(\Omega_i,\Omega_j)\ge2\delta\quad\forall i\ne j,\quad\mbox{and}\quad\frac{\int_{\Omega_j}\widetilde Ke^u}{\int_\Sigma\widetilde Ke^u}\ge\delta.$$
For any $j=1,\dots, k+1$, we apply Corollary \ref{tmimprint} and get
$$\log\int_\Sigma\widetilde Ke^u\le\frac{1+\varepsilon}{8(k+1)\pi}\int_\Sigma|\nabla u|^2+C.$$
Moreover, since the ratio $\frac{\widetilde h^2}{\left|\widetilde K\right|}$ is uniformly bounded on $\partial\Sigma$, we have the following inequality (\cite[Proposition 4.1]{bls2})
$$\frac{\left(\int_{\partial\Sigma}\widetilde he^{u/2}\right)^2}{\int_\Sigma\widetilde K e^u}\le C\left(\int_\Sigma|\nabla u|^2+1\right),$$
from which we get
\begin{eqnarray*}
\log\left|\int_{\partial\Sigma}\widetilde he^{u/2}\right|&\le&\frac12\log\int_\Sigma\widetilde Ke^u+\log\left(\int_\Sigma|\nabla u|^2+1\right)+C\\
&\le&\frac{1+2\varepsilon}{16(k+1)\pi}\int_\Sigma|\nabla u|^2+C.
\end{eqnarray*}
Therefore, from \eqref{ineq} and the definition of $\mathcal J_\lambda$ we get
$$\mathcal J_\lambda(u)\ge\frac12\left(1-\frac{(1+2\varepsilon)\lambda}{8(k+1)\pi}\right)\int_\Sigma|\nabla u|^2-C,$$
that is, if $\varepsilon$ is chosen small enough, $\mathcal J_\lambda(u)\ge-L$ for some $L>0$, which is a contradiction.
\end{proof}
Lemma \ref{concentr} implies that the unit measure $\frac{\widetilde Ke^{u_n}}{\int_\Sigma\widetilde Ke^{u_n}}$ resembles, in some sense, a finite linear combination of Dirac deltas with at most $k$ elements. This allows us to describe the low sublevels of $\mathcal J_\lambda$ through the formal barycenters of $\overline\Sigma$ of order $k$:
$$\overline\Sigma_k=\left\{\sum_{i=1}^kt_i\delta_{x_i}:\sum_{i=1}^kt_i=1, x_i\in\overline\Sigma,\forall i=1,\dots, k\right\}.$$
The topological properties of barycenters, which are essential to our purposes, are summed up in the following well-known result, whose proof can be found in \cite{dm,ma}.
\begin{lemma}\label{bary}
For any compact $n$-dimensional manifold $M$, with or without boundary, and $k\ge1$, the set $M_k$ is a stratified set, namely a union of open manifolds of different dimensions, whose maximal dimension is $(n+1)k-1$. Furthermore, $M_k$ is not contractible if $M$ is not contractible.
\end{lemma}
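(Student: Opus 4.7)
The proof splits naturally into two parts: the stratification is routine book-keeping, while the non-contractibility is the topologically non-trivial content.

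For the stratification, I would partition $M_k$ according to the cardinality of the support of the measure. Define, for $j=1,\dots,k$,
$$M_k^{(j)} := \left\{\sum_{i=1}^j t_i \delta_{x_i} : x_1,\dots,x_j \in M \text{ pairwise distinct},\ t_i > 0,\ \sum_{i=1}^j t_i = 1\right\}.$$
Each $M_k^{(j)}$ arises as the quotient of $(M^j \setminus \mathrm{Diag}) \times \Delta^{j-1}_\circ$ by the free $S_j$-action that simultaneously permutes points and weights, where $\mathrm{Diag}\subset M^j$ is the fat diagonal and $\Delta^{j-1}_\circ$ is the open standard simplex. Hence $M_k^{(j)}$ is an open manifold of dimension $jn + (j-1) = (n+1)j - 1$, and $M_k = \bigsqcup_{j=1}^k M_k^{(j)}$ with closures satisfying $\overline{M_k^{(j)}} = \bigsqcup_{\ell\le j} M_k^{(\ell)}$ in the weak-$*$ topology, giving the required stratified structure of top dimension $(n+1)k - 1$ attained at $j=k$.

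For the non-contractibility, the strategy is to show that the canonical inclusion $i:M \hookrightarrow M_k$ given by $x \mapsto \delta_x$ cannot be null-homotopic whenever $M$ itself is not contractible, since a contractible $M_k$ would force every map into it, hence $i$, to be null-homotopic. I would proceed by induction on $k$ along the filtration $M = M_1 \subset M_2 \subset \cdots \subset M_k$: the base case $k=1$ is trivial since $M_1 = M$, and for the inductive step I would analyse the pair $(M_k, M_{k-1})$ via the long exact sequence in $\mathbb Z_2$-homology. Here the top stratum $M_k\setminus M_{k-1} = M_k^{(k)}$ fibres over the unordered configuration space $F(M,k)/S_k$ with contractible fibre $\Delta^{k-1}_\circ$, so one can reduce its homotopy type to that of the base, and the latter carries the non-triviality of $M$.

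The main obstacle is verifying that, at each step of this induction, the connecting homomorphism does not annihilate the non-trivial homology classes that the inclusion $M_{k-1} \hookrightarrow M_k$ pushes forward; in particular one must produce a top-dimensional $\mathbb Z_2$-class in $H_{(n+1)k-1}(M_k,M_{k-1})$ that pairs non-trivially with a class coming from $H_*(M)$ via repeated coning. This is the delicate topological computation, handled in the literature by Leray--Hirsch or spectral-sequence arguments on the stratification from the first step; since the assertion is classical and a detailed treatment appears in the references cited in the statement, namely \cite{dm, ma}, I would invoke their result for the final non-contractibility conclusion rather than reproduce the full cohomological computation here.
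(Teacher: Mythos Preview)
Your proposal is consistent with the paper's treatment: the paper does not prove this lemma at all but simply states it as well-known and defers to \cite{dm,ma}, which is exactly what you do in your final paragraph. Your additional material on the stratification and the inductive outline for non-contractibility is correct and more detailed than anything the paper provides, but since both you and the paper ultimately invoke the same references for the substantive topological claim, the approaches coincide.
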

Additionally, the following proposition can be derived from standard arguments based on Lemma \ref{concentr}, (see \cite{d}, Lemma 4.9).
\begin{proposition}\label{proj}
If $\lambda<8(k+1)\pi$, then there exists a projection $\Psi:\mathcal J_\lambda^{-L}\to\overline\Sigma_k$ for $L>0$ large enough.
\end{proposition}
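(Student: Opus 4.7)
The strategy is the classical barycenter projection scheme of Djadli--Malchiodi \cite{dm,d}, adapted to the present singular surface with boundary. The plan is to send each $u \in \mathcal J_\lambda^{-L}$ to a normalised measure on $\overline\Sigma$, use Lemma \ref{concentr} to place this measure close to $\overline\Sigma_k$, and then compose with a continuous retraction onto $\overline\Sigma_k$.

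First, I would define
$$\sigma\colon u \longmapsto \mu_u := \frac{\widetilde K e^u}{\int_\Sigma \widetilde K e^u} \in \mathcal P(\overline\Sigma),$$
where $\mathcal P(\overline\Sigma)$ denotes the space of Borel probability measures on $\overline\Sigma$ endowed with the Kantorovich--Rubinstein (bounded-Lipschitz) distance $d_{\mathrm{KR}}$. Since $\widetilde K$ is bounded below by a positive constant on $\overline\Sigma$ and $u \mapsto \int_\Sigma \widetilde K e^u$ is continuous on $\overline H^1(\Sigma)$ by Proposition \ref{tm}, the map $\sigma$ is well defined and continuous. Next, applying Lemma \ref{concentr} with small parameters $\varepsilon, r > 0$ produces $L = L(\varepsilon,r) > 0$ such that every $u \in \mathcal J_\lambda^{-L}$ admits $k$ points $x_1,\dots,x_k \in \overline\Sigma$ with $\mu_u\bigl(\bigcup_i B_r(x_i)\bigr) \ge 1-\varepsilon$; redistributing the mass to the nearest $x_i$ and testing against $1$-Lipschitz functions yields $d_{\mathrm{KR}}(\mu_u, \overline\Sigma_k) \le C(\varepsilon+r)$. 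Thus, by choosing $\varepsilon, r$ small enough, $\sigma(\mathcal J_\lambda^{-L})$ lies in an arbitrarily small neighbourhood $U_\delta$ of $\overline\Sigma_k$ in $\mathcal P(\overline\Sigma)$.

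The main step, and the real obstacle, is the construction of a continuous retraction $\rho\colon U_\delta \to \overline\Sigma_k$; once this is available, $\Psi := \rho \circ \sigma$ is the required projection. The difficulty stems from the stratified nature of $\overline\Sigma_k$ described in Lemma \ref{bary}: strata correspond to coincidence patterns among the $k$ support points, and $\rho$ must remain continuous as support points collide. The standard construction (see \cite{ma,dm}) proceeds by downward induction on the strata, using an approximate nearest-point assignment on each open stratum together with a partition of unity adapted to the stratification that vanishes on the lower strata, so that weights are continuously transferred through collisions. The presence of corners, conical points, and of $\partial\Sigma$ requires no special treatment in this step, since $\widetilde K$ is uniformly positive on $\overline\Sigma$ and the argument is purely topological/measure-theoretic, so boundary-concentrated and interior-concentrated configurations are handled on the same footing.
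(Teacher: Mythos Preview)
Your proposal is correct and follows precisely the standard Djadli--Malchiodi barycenter projection scheme that the paper invokes: the paper does not supply its own proof here but simply refers to standard arguments based on Lemma \ref{concentr} and cites \cite{d}, Lemma 4.9. One minor inaccuracy: $\widetilde K$ need not be bounded below by a positive constant on $\overline\Sigma$ (it vanishes at singular points with $\alpha_i>0$ or $\beta_j>0$), but this does not affect your argument, since what is actually required for $\sigma$ to be well defined and continuous is only that $\int_\Sigma\widetilde Ke^u>0$, which holds because $\widetilde K>0$ almost everywhere under the standing assumption $K>0$.
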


However, barycenters on the entire $\overline\Sigma$ may be too complicated to handle, therefore we will actually consider barycenter on a simpler manifold, such as a curve.\\
We now follow a strategy similar to \cite{bjwy} to construct barycenters on a boundary component of $\Sigma$. We assume that $\Sigma$ has at least two boundary components, one of which has no corners, so that
$$\partial\Sigma=\gamma_1\cup\gamma_2\cup\dots\cup\gamma_l,$$
where $l\ge2$, $\gamma_i\simeq\mathbb S^1$ for $i=1,\dots,l$ and $q_j\notin\gamma_1$ for any $j=1,\dots,m$.

\begin{lemma}\label{retr}
If $\lambda<8(k+1)\pi$, then there exists a projection map $\widetilde\Psi:\mathcal J_\lambda^{-L}\to(\gamma_1)_k$ for $L>0$ large enough.
\end{lemma}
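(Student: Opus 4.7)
The plan is to construct $\widetilde\Psi$ as the composition of the projection $\Psi:\mathcal J_\lambda^{-L}\to\overline\Sigma_k$ provided by Proposition \ref{proj} with the push-forward to barycenters of a suitable topological retraction $r:\overline\Sigma\to\gamma_1$. The existence of $r$ is precisely where the two structural hypotheses enter: having a second boundary component $\gamma_2$ is what makes $\gamma_1$ a retract of $\overline\Sigma$, while the absence of corners on $\gamma_1$ guarantees that $\gamma_1\cong\mathbb S^1$, so that the target $(\gamma_1)_k$ is a bona fide stratified barycenter space (non-contractible, by Lemma \ref{bary}).

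First I would build the retraction. Because $\partial\Sigma$ has at least two components, I can fix a basepoint $x_0\in\gamma_1$ and cut $\overline\Sigma$ open along a maximal system of disjoint simple arcs based at $x_0$, turning it into a topological disk and exhibiting a spine $G\subset\overline\Sigma$ which, up to homotopy, is the wedge at $x_0$ of $\gamma_1$ with finitely many further loops. Collapsing each of those remaining loops to the basepoint gives a continuous retraction $\rho:G\to\gamma_1$, and pre-composing with the deformation retraction $\overline\Sigma\to G$ produces the desired $r:\overline\Sigma\to\gamma_1$, which is the identity on $\gamma_1$. Conical singularities in the interior and corners on other boundary components create no obstruction, since the construction is purely topological.

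Next, I would push $r$ forward to barycenters via $r_*\bigl(\sum_{i=1}^k t_i\delta_{x_i}\bigr)=\sum_{i=1}^k t_i\delta_{r(x_i)}$, a continuous map of stratified spaces $\overline\Sigma_k\to(\gamma_1)_k$ (after the standard identification of coinciding atoms), and set $\widetilde\Psi:=r_*\circ\Psi$. This defines the required continuous map $\mathcal J_\lambda^{-L}\to(\gamma_1)_k$ for any $L$ large enough to apply Proposition \ref{proj}.

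The main obstacle, and the step I expect to be the hardest, is to make $\widetilde\Psi$ a projection in the operational sense demanded by the subsequent min-max scheme: to produce a matching family of test functions $\Phi:(\gamma_1)_k\to\mathcal J_\lambda^{-L}$ such that $\widetilde\Psi\circ\Phi$ is homotopic to $\mathrm{id}_{(\gamma_1)_k}$. These will be sums of singular Liouville bubbles centered at the support points of $\mu\in(\gamma_1)_k$; for the energy estimate $\mathcal J_\lambda(\Phi(\mu))\to-\infty$ to remain compatible with $\lambda<8(k+1)\pi$ one needs that, along $\gamma_1$, the effective Trudinger quantum equals $4\pi$, which is where the assumptions \eqref{alphabetacond} (giving $\tau=1$) and $q_j\notin\gamma_1$ (ruling out any corner enhancement) enter crucially. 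Once the bubbles are designed, Lemma \ref{concentr} ensures that $\Psi$ sends each such test function to a barycenter whose support is close to the original concentration points on $\gamma_1$, so that $r_*$ acts as the identity and the desired homotopy is obtained by sliding along the linear structure of the barycenter simplices.
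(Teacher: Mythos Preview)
Your first three paragraphs are correct and follow exactly the paper's strategy: take the projection $\Psi:\mathcal J_\lambda^{-L}\to\overline\Sigma_k$ from Proposition~\ref{proj} and post-compose with the push-forward $r_*:\overline\Sigma_k\to(\gamma_1)_k$ of a retraction $r:\overline\Sigma\to\gamma_1$. The only difference is in how the retraction is built. You deformation-retract $\overline\Sigma$ onto a spine (a wedge of circles containing $\gamma_1$ as one factor) and collapse the remaining loops; the paper instead takes a tubular neighbourhood $\Omega$ of $\gamma_1\cup\gamma_2$ together with a connecting arc, collapses $\overline\Sigma\setminus(\Omega\cup\gamma_1\cup\gamma_2)$ to a point to obtain a cylinder $\widehat\Sigma\approx\mathbb S^1\times[0,1]$, and then projects onto $\mathbb S^1\times\{0\}\approx\gamma_1$. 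Both constructions yield a continuous map fixing $\gamma_1$ pointwise; the paper's is more explicit and makes visible why the second boundary component $\gamma_2$ is needed (it is the other end of the cylinder), while yours is a standard handle/spine argument and requires checking that $\gamma_1$ can actually be taken as a wedge factor of the spine, which follows since for $l\ge2$ the boundary loop $\gamma_1$ is a primitive element of the free group $\pi_1(\overline\Sigma)$.

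Two remarks. First, the absence of corners on $\gamma_1$ is irrelevant here: any boundary component is topologically $\mathbb S^1$ regardless of corners, so your justification for $\gamma_1\cong\mathbb S^1$ is misplaced. That hypothesis only enters later, in the energy estimates for the bubbles (Lemma~\ref{bubble}), exactly as you anticipate in your last paragraph. Second, that last paragraph about constructing $\Phi$ and verifying $\widetilde\Psi\circ\Phi\simeq\mathrm{id}$ is not part of this lemma; in the paper it is handled separately in Lemma~\ref{bubble}, Proposition~\ref{test}, and the proof of Proposition~\ref{infsup}. The present lemma asks only for the map $\widetilde\Psi$, so your proof is complete after the third paragraph.
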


\begin{proof}
From Proposition \ref{proj}, we know that for $L\gg0$, there exists a projection $\Psi:\mathcal J_\lambda^{-L}\to\Sigma_k$. To construct $\widetilde\Psi$, it is sufficient to define a retraction map $\mathcal R:\overline\Sigma\to\gamma_1$, and then set $\widetilde\Psi:=\Psi\circ\mathcal R_*$, where $\widetilde\Psi(u):=\sum_{i=1}^kt_i\delta_{\mathcal R(x_i)}$ if $\Psi(u)=\sum_{i=1}^kt_i\delta_{x_i}$.\\
Consider an open subset $\Omega\subset\Sigma$ obtained by joining tubular neighborhoods of $\gamma_1$ and $\gamma_2$ together with tubular neighborhoods of a geodesic curve connecting these boundary components in $\Sigma$, as shown in Figure \ref{fig}.\\
\begin{figure}[ht]
\centering
\includegraphics[width=9cm]{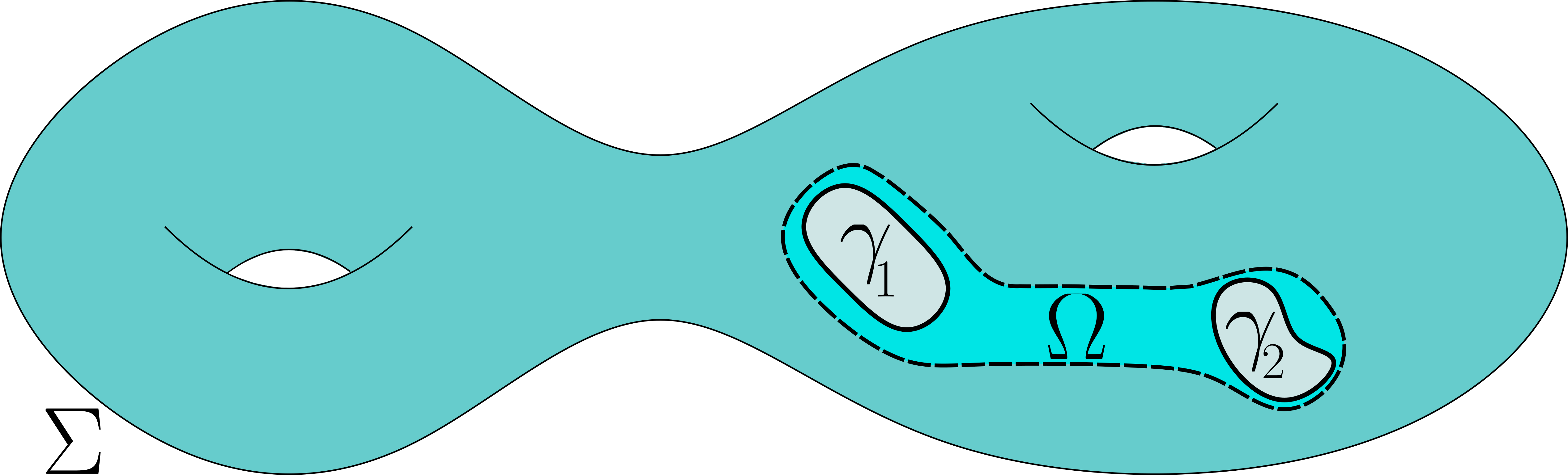}
\caption{The open domain $\Omega\subset\Sigma $.}
\label{fig}
\end{figure}
Next, we define the surface $\widehat\Sigma$ by identifying $\overline\Sigma\setminus(\Omega\cup\gamma_1\cup\gamma_2)$ to a single point. This identification is achieved via the quotient map $p:\overline\Sigma\to\widehat\Sigma$, which topologically collapses the region $\overline\Sigma\setminus(\Omega\cup\gamma_1\cup\gamma_2)$ to a single point. The resulting surface is orientable of genus zero with two boundary components, hence it is topologically equivalent to a standard cylinder, i.e. $\widehat\Sigma\approx\mathbb S^1\times[0,1]$, with boundary $\gamma_1\cup\gamma_2\approx\left(\mathbb S^1\times\{0\}\right)\cup\left(\mathbb S^1\times\{1\}\right)$.\\ 
We define the retraction by quotienting $\Sigma$ to the cylinder $\widehat\Sigma$ and then mapping the latter on one boundary. More precisely, lef $f:\widehat\Sigma\leftrightarrow\mathbb S^1\times[0,1]$ be the above-mentioned homeomorphism and $\pi:\mathbb S^1\times[0,1]\to\mathbb S^1\times\{0\}$ the projection onto the first component,$\mathcal R:\overline\Sigma\to\gamma_1$ is then defined as
$$\mathcal R:=f^{-1}\circ\pi\circ f\circ p.$$
\end{proof}

To further explore the topological properties of the low sublevels, we will construct a reverse map $\Phi:(\gamma_1)_k\to\mathcal J_\lambda^{-L}$. To achieve this, we consider a family of standard \emph{bubbles} centered on the boundary component $\gamma_1$ and we define, for $\Lambda>0$, $\Phi:(\gamma_1)_k\to\overline H^1(\Sigma)$ as 

\begin{equation}\label{phi}
\Phi(\sigma):=\varphi_{\Lambda,\sigma}-\overline{\varphi_{\Lambda,\sigma}},
\end{equation}
$$\mbox{where}\quad\varphi_{\Lambda,\sigma}(x)=\log\sum_{i=1}^kt_i\frac{\Lambda^2}{\left(1+\Lambda^2d(x,x_i)^2\right)^2}\qquad\mbox{for}\quad\sigma=\sum_{i=1}^kt_i\delta_{x_i}.$$

In this context, the choice of $\gamma_1$ is crucial, because we are working with bubbles centered on a boundary component without any conical points nor corners, hence we basically can neglect singularities. Therefore, we can argue as in \cite[Proposition 4.2]{ma} to derive the following estimates.

\begin{lemma}\label{bubble}
Let $\varphi_{\Lambda,\sigma}$ be defined as in \eqref{phi}. Then, as $\Lambda\to\infty$, we have the following estimates, uniformly on $\sigma\in(\gamma_1)_k$:
\begin{eqnarray*}
\frac12\int_\Sigma|\nabla\varphi_{\Lambda,\sigma}|&\le&8k\pi(1+o(1))\log\Lambda,\\
\log\int_\Sigma\widetilde Ke^{\varphi_{\Lambda,\sigma}-\overline{\varphi}_{\Lambda,\sigma}}&=&2(1+o(1))\log\Lambda,\\
\log\left|\int_{\partial\Sigma}\widetilde he^{(\varphi_{\Lambda,\sigma}-\overline{\varphi}_{\Lambda,\sigma})/2}\right|&=&(1+o(1))\log\Lambda.
\end{eqnarray*}
Moreover, we have 
$$\frac{\widetilde Ke^{\varphi_{\Lambda,\sigma}}}{\int_\Sigma\widetilde Ke^{\varphi_{\Lambda,\sigma}}}\underset{\Lambda\to\infty}\rightharpoonup\sigma,$$
in the sense of measures.
\end{lemma}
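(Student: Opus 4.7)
The plan is to reduce every computation to standard half-plane bubble estimates, exploiting the fact that $\sigma$ is supported on $\gamma_1$, which by assumption contains no conical singularities and no corners. In a fixed tubular neighborhood $\mathcal U$ of $\gamma_1$ the metric is smooth, and by \eqref{kh} together with the smoothness of the Green's functions $G_{p_i}, G_{q_j}$ away from their poles, both $\widetilde K$ and $\widetilde h$ are smooth and uniformly bounded above and below by positive constants on $\mathcal U$ and $\mathcal U\cap\partial\Sigma$ respectively. In Fermi coordinates $(s,t)$ adapted to $\gamma_1$ the geodesic distance is comparable to the Euclidean one, so each summand $f_i(x):=\Lambda^2/(1+\Lambda^2 d(x,x_i)^2)^2$ is, up to smooth factors, the standard half-plane bubble centered at the boundary point $x_i$.

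For the Dirichlet energy, writing $f:=\sum_i t_i f_i$, one has $\nabla\varphi_{\Lambda,\sigma}=\nabla f/f$, and Cauchy--Schwarz yields the pointwise bound $|\nabla\varphi_{\Lambda,\sigma}|^2\le\sum_i t_i|\nabla f_i|^2/(f_i\cdot\sum_j t_j f_j)$. Partitioning $\mathcal U$ into Voronoi-type cells $\Omega_i$ on which $f_i$ dominates and estimating the right-hand side by $|\nabla\log f_i|^2$ on $\Omega_i$ reduces the matter to the classical bubble computation $\int_{\Omega_i}|\nabla\log f_i|^2=16\pi\log\Lambda+O(1)$, obtained via the change of variable $y=\Lambda(x-x_i)$. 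Summing over $i=1,\dots,k$ and noting that $\Sigma\setminus\mathcal U$ contributes only $O(1)$ yields the first estimate.

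For the volume and boundary integrals, the same scaling gives $\int_{\mathcal U}f_i\,dx\to\pi/2$ (half-ball) and $\int_{\mathcal U\cap\gamma_1}f_i^{1/2}\,ds\to\pi$, so $\int_\Sigma\widetilde K e^{\varphi_{\Lambda,\sigma}}\to\frac\pi 2\sum_i t_i\widetilde K(x_i)>0$ and $\int_{\partial\Sigma}\widetilde h e^{\varphi_{\Lambda,\sigma}/2}=\pi\sum_i\sqrt{t_i}\widetilde h(x_i)+o(1)$, the contributions of $\gamma_2,\dots,\gamma_l$ being of order $\Lambda^{-1}$ since those components lie a fixed distance from the $x_i$. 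For $x$ at distance at least some $r_0>0$ from all $x_i$'s, one has $\varphi_{\Lambda,\sigma}(x)=-2\log\Lambda+O(1)$ uniformly; since this region has area bounded below by a positive constant, $\overline{\varphi_{\Lambda,\sigma}}=-2\log\Lambda+o(\log\Lambda)$. Multiplying by $e^{-\overline\varphi_{\Lambda,\sigma}}$ and $e^{-\overline\varphi_{\Lambda,\sigma}/2}$ respectively yields the second and third claimed estimates, while the weak-$*$ convergence $\widetilde K e^{\varphi}/\int\widetilde K e^{\varphi}\rightharpoonup\sigma$ follows by testing against continuous functions and localizing with the same change of variables.

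The main obstacle is the uniformity of all $o(1)$ terms as $\sigma$ ranges over the stratified space $(\gamma_1)_k$. Two degenerate configurations must be addressed: collisions $x_i=x_j$, handled by observing that two bubbles at the same point produce the same asymptotics as a single bubble with combined weight $t_i+t_j$, so the estimates pass continuously through the collision strata; and vanishing weights $t_i\to 0$, which make the $i$-th bubble invisible and are absorbed into the $o(1)$ correction. A further delicate point is that the leading constant $\sum_i\sqrt{t_i}\widetilde h(x_i)$ in the boundary integral may vanish at some configurations, but since the subsequent topological argument only needs a lower bound matching the claimed scaling outside a lower-dimensional sub-stratum, compactness of $(\gamma_1)_k$ allows any such exceptional behaviour to be absorbed into the $o(\log\Lambda)$ error term.
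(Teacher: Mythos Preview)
Your proposal is correct and follows precisely the route the paper indicates: the paper gives no self-contained proof of this lemma but simply observes that, since $\gamma_1$ carries no conical points or corners, $\widetilde K$ and $\widetilde h$ are smooth in a neighbourhood and one ``can argue as in \cite[Proposition~4.2]{ma}''. Your sketch reproduces that standard bubble computation (Voronoi partition for the Dirichlet energy, rescaling for the volume and boundary integrals, uniform treatment of collisions and vanishing weights via compactness of the barycenter space), so the two approaches coincide.

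One small correction on the third estimate: your worry that the leading constant $\sum_i\sqrt{t_i}\,\widetilde h(x_i)$ might vanish is legitimate, but your proposed fix (restricting to the complement of a sub-stratum) is not the right viewpoint. In the application to Proposition~\ref{test} only the \emph{upper} bound $\log\bigl|\int_{\partial\Sigma}\widetilde h e^{(\varphi-\overline\varphi)/2}\bigr|\le(1+o(1))\log\Lambda$ is actually used, since when $B<0$ one writes $\sqrt{B^2+2\lambda A}+B=2\lambda A/(\sqrt{B^2+2\lambda A}+|B|)$ and bounds the logarithm from below using the asymptotics of $A$ together with the upper bound on $|B|$. That upper bound holds uniformly on all of $(\gamma_1)_k$ with no exceptional set.
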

The above estimates indicate that the values of the functional can be made as negatively large as desired, leading to the following conclusion.

\begin{proposition}\label{test}
If $\lambda>8k\pi$, then, for any $L>0$, there exists $\Lambda=\Lambda(L)\gg0$ such that $\Phi((\gamma_1)_k)\subset\mathcal J_\lambda^{-L}$, where $\Phi$ is defined as in \eqref{phi}.
\end{proposition}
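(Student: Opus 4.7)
The plan is to substitute the test function $\Phi(\sigma) = \varphi_{\Lambda,\sigma} - \overline{\varphi_{\Lambda,\sigma}}$ directly into $\mathcal{J}_\lambda$, apply the uniform asymptotics from Lemma \ref{bubble}, and show the energy tends to $-\infty$ as $\Lambda \to \infty$, uniformly on the compact stratified space $(\gamma_1)_k$. The choice of $\gamma_1$ is essential because it is a boundary component free of corners and conical points, which allows us to use the standard half-space bubble asymptotics without singular corrections.

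From Lemma \ref{bubble}, the Dirichlet piece satisfies $\frac{1}{2}\int_\Sigma |\nabla \Phi(\sigma)|^2 \leq 8\pi k (1+o(1))\log\Lambda$, while the two conformal volume integrals $A_\sigma := \int_\Sigma \widetilde K e^{\Phi(\sigma)}$ and $B_\sigma := \int_{\partial\Sigma} \widetilde h e^{\Phi(\sigma)/2}$ satisfy $\log A_\sigma = 2(1+o(1))\log\Lambda$ and $\log|B_\sigma| = (1+o(1))\log\Lambda$, all uniformly in $\sigma$. Writing out
\[
\mathcal{F}_{\lambda/(4\pi)}(A,B) = 2\lambda \log\bigl(\sqrt{B^2+2\lambda A}+B\bigr) + 2\lambda \cdot \frac{B}{\sqrt{B^2+2\lambda A}+B},
\]
the second summand is bounded in $[0, 2\lambda]$, and the argument of the logarithm is positive since $A_\sigma > 0$. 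To estimate its order in $\Lambda$, I split cases: when $B_\sigma \geq 0$, the sum dominates $|B_\sigma| \asymp \Lambda$; when $B_\sigma < 0$, the rationalization $\sqrt{B^2+2\lambda A}+B = 2\lambda A/(\sqrt{B^2+2\lambda A}+|B|)$ produces a numerator of order $\Lambda^2$ over a denominator of order $\Lambda$. Both cases yield order $\Lambda$, hence $\log\bigl(\sqrt{B_\sigma^2+2\lambda A_\sigma}+B_\sigma\bigr) = \log \Lambda + o(\log \Lambda)$, and consequently $\mathcal{F}_{\lambda/(4\pi)}(A_\sigma, B_\sigma) = 2\lambda \log \Lambda + o(\log \Lambda)$.

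Combining the estimates, $\mathcal{J}_\lambda(\Phi(\sigma)) \leq (8\pi k - 2\lambda) \log \Lambda + o(\log \Lambda)$ uniformly in $\sigma$. Under the hypothesis $\lambda > 8\pi k$, the leading coefficient is strictly negative, so the right-hand side diverges to $-\infty$ as $\Lambda \to \infty$. Therefore, for every $L > 0$, choosing $\Lambda = \Lambda(L)$ sufficiently large yields $\Phi((\gamma_1)_k) \subset \mathcal{J}_\lambda^{-L}$.

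The principal technical point is maintaining uniformity over the compact stratified set $(\gamma_1)_k$, including degenerate configurations where some weight $t_i$ tends to $0$ or some centers $x_i$ coalesce; Lemma \ref{bubble} already guarantees this at the level of leading-order asymptotics, but the case $B_\sigma < 0$ requires the rationalization above to convert a potential cancellation between $\sqrt{B_\sigma^2 + 2\lambda A_\sigma}$ and $|B_\sigma|$ into a quotient with controlled growth in $\Lambda$.
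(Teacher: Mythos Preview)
Your approach is the same as the paper's: plug the asymptotics of Lemma \ref{bubble} into $\mathcal J_\lambda$ and observe that the leading coefficient in $\log\Lambda$ is negative when $\lambda>8k\pi$. The paper records the one-line bound $\mathcal J_\lambda(\Phi(\sigma))\le(8k\pi-\lambda+o(1))\log\Lambda$; your coefficient $8\pi k-2\lambda$ is actually the sharp one, but either suffices for the conclusion.

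There is one slip to repair. Your assertion that the non-logarithmic summand $2\lambda\,B/(\sqrt{B^2+2\lambda A}+B)$ lies in $[0,2\lambda]$ is valid only when $B_\sigma\ge0$. When $B_\sigma<0$, the same rationalisation you use for the logarithm gives
\[
\frac{B_\sigma}{\sqrt{B_\sigma^2+2\lambda A_\sigma}+B_\sigma}
=-\frac{|B_\sigma|\bigl(\sqrt{B_\sigma^2+2\lambda A_\sigma}+|B_\sigma|\bigr)}{2\lambda A_\sigma},
\]
which is of order $-B_\sigma^2/A_\sigma$. The estimates of Lemma \ref{bubble}, as stated with $o(1)$ in the exponents, only yield $\log(B_\sigma^2/A_\sigma)=o(\log\Lambda)$, i.e.\ $B_\sigma^2/A_\sigma=\Lambda^{o(1)}$, which is neither bounded nor $o(\log\Lambda)$ in general; so as written your lower bound on $\mathcal F_{\lambda/(4\pi)}$ is not justified in this case. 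The easy fix is to invoke the sharper form of the bubble estimates actually proved in \cite[Proposition~4.2]{ma}: the errors there are $O(1)$ rather than $o(\log\Lambda)$, so $B_\sigma^2/A_\sigma$ is uniformly bounded in $\sigma$ and the second summand is $O(1)$, which is all you need.
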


\begin{proof}
By the previous lemma we get
$$\mathcal J_\lambda\left(\Phi(\sigma)\right)\le(8k\pi-\lambda+o(1))\log\Lambda$$
which, since $\lambda>8k\pi$, will have an arbitrarily large negative value by taking $\Lambda$ large enough.
\end{proof}

\subsection{Proofs of the Theorems \ref{lambda} and \ref{minmax}}\label{proof}

\

In order to conclude, we will exploit the compactness results from Theorem \ref{blowupanal} and Corollary \ref{compa}.\\
In particular, with respect to classical mean field problem, in order to prevent blow-up we will also need to take account of the boundedness of the Morse index. We get a crucial upper bound on the Morse index of solutions due to the fact that solutions come from a finite-dimensional min-max scheme, in the spirit of the work of Fang and Ghoussoub \cite{fg}.\\
We first construct a min-max scheme based on the analysis of the barycenters $\left(\gamma_1\right)_k$. Then, we introduce an auxiliary parameter $\mu$ and apply the celebrated monotonicity trick by Struwe \cite{s}; we get a solution to \eqref{eqlambda} for a dense sequence of values $\mu_n$ with uniformly bounded Morse index. Finally, we pass to the limit for $\mu_n\underset{n\to\infty}\to1$ using the compactness results we already got. The introduction of the extra parameter is a new ingredient for this type of problems; it is essential, since we need the energy functional to be monotone with respect to the parameter. Similar arguments have been used in other curvature prescription problems \cite{dm,lsmr,lsrsr}.\\

In order to precisely define the min-max structure, we define the contractible cone based on the barycenter set $(\gamma_1)_k$ as
$$\widehat{(\gamma_1)_k}:=\frac{(\gamma_1)_k\times [0,1]}{(\gamma_1)_k\times\{0\}}.$$
We then consider the family of maps on $\widehat{(\gamma_1)_k}$ which extend to the cone the bubbles defined on $(\gamma_1)_k$ as \eqref{phi}:
$$\Pi_\Lambda=\left\{\pi:\widehat{(\gamma_1)_k}\to H^1(\Sigma):\mbox{ $\pi$ continuous and }\pi\left(\sigma\times\{1\}\right)=\varphi_{\Lambda,\sigma}\,\forall\sigma\in(\gamma_1)_k\right\}.$$
Following the same strategy as \cite{ma}, we then obtain the next min-max structure.

\begin{proposition}\label{infsup}
Assume $8k\pi<\lambda<8(k+1)\pi$ and $L\gg0$ so large that Lemma \ref{retr} holds true with $\frac L4$. Then, the set $\Pi_\Lambda$ is non-empty and
$$\overline\Pi_\Lambda:=\inf_{\pi\in\Pi_\Lambda}\sup_{(\sigma,t)\in\widehat{(\gamma_1)_k}}\mathcal J_\lambda\left(\pi(z,t)\right)>-\frac L2.$$
\end{proposition}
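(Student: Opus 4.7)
The plan is to argue in two steps: construct an explicit element of $\Pi_\Lambda$ to show non-emptiness, then prove the min-max lower bound by a standard topological contradiction that exploits the non-contractibility of $(\gamma_1)_k$.

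For non-emptiness, I would simply set $\pi(\sigma,t) := t(\varphi_{\Lambda,\sigma}-\overline{\varphi_{\Lambda,\sigma}})$ for $(\sigma,t)\in(\gamma_1)_k\times[0,1]$. This is well-defined on the quotient $\widehat{(\gamma_1)_k}$ since it collapses to $0$ at $t=0$, it is continuous on $(\gamma_1)_k\times[0,1]$ in $\sigma$ (by continuity of $\varphi_{\Lambda,\sigma}$ in its centers), and it satisfies $\pi(\sigma,1)=\varphi_{\Lambda,\sigma}-\overline{\varphi_{\Lambda,\sigma}}$, which is the required boundary condition defining $\Pi_\Lambda$ (up to the harmless normalization which lies in $\overline H^1(\Sigma)$).

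For the lower bound, I argue by contradiction: suppose there exists $\pi\in\Pi_\Lambda$ with $\sup\mathcal J_\lambda\circ\pi\le-L/2$. Then in particular $\pi\bigl(\widehat{(\gamma_1)_k}\bigr)\subset\mathcal J_\lambda^{-L/4}$, so by Lemma \ref{retr} (with the threshold $L/4$) the composition
\[
T:=\widetilde\Psi\circ\pi\colon\widehat{(\gamma_1)_k}\longrightarrow(\gamma_1)_k
\]
is well-defined and continuous. Since $\widehat{(\gamma_1)_k}$ is contractible, $T$ is null-homotopic; in particular its restriction $T|_{(\gamma_1)_k\times\{1\}}$, namely $\sigma\mapsto\widetilde\Psi(\varphi_{\Lambda,\sigma})$, is null-homotopic in $(\gamma_1)_k$. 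The core of the proof is to show that, for $\Lambda$ large enough, this restriction is homotopic to $\mathrm{id}_{(\gamma_1)_k}$: combining the concentration statement in Lemma \ref{bubble}, which gives $\widetilde K e^{\varphi_{\Lambda,\sigma}}/\int_\Sigma \widetilde K e^{\varphi_{\Lambda,\sigma}}\rightharpoonup\sigma$, with the construction of $\widetilde\Psi$ via the localization procedure of Lemma \ref{concentr} and the retraction $\mathcal R$ of Lemma \ref{retr}, the image $\widetilde\Psi(\varphi_{\Lambda,\sigma})$ consists of barycenters supported in arbitrarily small neighborhoods of the support of $\sigma$ on $\gamma_1$, with weights arbitrarily close to the $t_i$'s; a straightforward linear homotopy on barycenters then connects it to $\mathrm{id}$. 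This yields that $\mathrm{id}_{(\gamma_1)_k}$ is null-homotopic, i.e.\ $(\gamma_1)_k$ is contractible, contradicting Lemma \ref{bary} since $\gamma_1\simeq\mathbb S^1$ is not contractible.

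The main obstacle is the verification that $\widetilde\Psi\circ\Phi$ is homotopic to the identity on $(\gamma_1)_k$ with a uniform choice of $\Lambda$: one must ensure that the concentration rates in Lemma \ref{bubble} are uniform in $\sigma\in(\gamma_1)_k$ (which is a compact stratified set by Lemma \ref{bary}) so that a single $\Lambda$ brings $\widetilde\Psi(\varphi_{\Lambda,\sigma})$ arbitrarily close to $\sigma$ in the metric of $(\gamma_1)_k$, and then to write down an explicit homotopy that respects the barycenter structure (for example, interpolating between the centers along geodesics in $\gamma_1\simeq\mathbb S^1$ and linearly between the weights). Everything else is bookkeeping: choosing $L$ sufficiently large so that both Lemma \ref{retr} applies with threshold $L/4$ and $\Lambda$ can be chosen so that Proposition \ref{test} places $\Phi\bigl((\gamma_1)_k\bigr)$ in $\mathcal J_\lambda^{-L}$, which is compatible with the hypothesis $8k\pi<\lambda<8(k+1)\pi$.
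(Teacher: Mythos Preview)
Your proposal is correct and follows essentially the same approach as the paper: an explicit convex interpolation $t\mapsto t\varphi_{\Lambda,\sigma}$ for non-emptiness, then a contradiction argument in which a hypothetical $\pi$ with $\sup\mathcal J_\lambda\circ\pi$ below $-L/2$ allows one to compose with $\widetilde\Psi$ and obtain a null-homotopy of $\widetilde\Psi\circ\Phi\simeq\mathrm{id}_{(\gamma_1)_k}$, contradicting Lemma~\ref{bary}. The only cosmetic discrepancy is that the negation of $\overline\Pi_\Lambda>-L/2$ gives $\pi$ with $\sup\mathcal J_\lambda\circ\pi\le-L/2+\varepsilon$ (the paper takes $\varepsilon=L/8$ to land in $\mathcal J_\lambda^{-3L/8}$), not exactly $\le-L/2$; this does not affect your argument since you only need the image in $\mathcal J_\lambda^{-L/4}$.
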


\begin{proof}
To show that $\Pi_\Lambda\ne\emptyset$, hence $\overline\Pi_\Lambda<\infty$, we observe that the map 
$$\overline\pi(\sigma,t)=t\varphi_{\Lambda,\sigma},\quad\mbox{for }(\sigma,t)\in\widehat{(\gamma_1)_k},$$
is indeed an element of $\Pi_\Lambda$.\\
We assume, for the sake of contradiction, that $\overline\Pi_\Lambda\le-\frac L2$. By the assumptions, Lemma \ref{retr} and Proposition \ref{test}, there exist maps
$$\widetilde\Psi:\mathcal J_\lambda^{-L/4}\to(\gamma_1)_k,\qquad\Phi:(\gamma_1)_k\to\mathcal J_\lambda^{-L/4},$$
whose composition $\widetilde\Psi\circ\Phi$ is homotopically equivalent to the identity on $(\gamma_1)_k$; the homotopical equivalence is obtained by letting $\Lambda\to\infty$, thanks to the last statement in Lemma \ref{bubble}.\\
If $\overline\Pi_\Lambda\le\frac L2$, then there exists $\pi\in\Pi_\Lambda$ such that $\mathcal J_\lambda(\pi(\sigma,t))\le-\frac38L$ for any $(\sigma,t)\in\widehat{(\gamma_1)_k}$. Therefore, one may define the map $H(\sigma,t):\widehat{(\gamma_1)_k}\to(\gamma_1)_k$ as $H(\sigma,t):=\widetilde\Psi(\pi(\sigma,t))$: this would be a homotopy between $\widetilde\Psi\circ\varphi_{\Lambda,\sigma}$ and a constant map, but this is impossible, since the former map is homotopically equivalent to the identity and $(\gamma_1)_k$ is not contractible by Lemma \ref{bary}. We found a contradiction.
\end{proof}

In order to apply the Struwe's monotonicity trick, we need to introduce the following perturbed problem:
\begin{equation}\label{mulambda}
\left\{\begin{array}{ll}-\mu\Delta u+\frac\lambda{|\Sigma|}=2C^2_\lambda(u)\widetilde Ke^u&\mbox{in }\Sigma,\\
\partial_\nu u=2C_\lambda(u)\widetilde he^{u/2}&\mbox{on }\partial\Sigma.\end{array}\right.,
\end{equation}
whose energy functional is given by $$\mathcal J_{\lambda,\mu}(u)=\frac\mu2\int_\Sigma|\nabla u|^2-\mathcal F_{\frac\lambda{4\pi}}\left(\int_{\partial\Sigma}\widetilde he^{u/2},\int_\Sigma\widetilde Ke^u\right).$$
It is easy to see that, if $\mu$ is close enough to $1$, then all estimates from Subsection \ref{varstruc}, as well as Proposition \ref{infsup}, holds true also for $\mathcal J_{\lambda,\mu}$ uniformly in $\mu$. In particular, given any large number $L>0$, there exists a $\Lambda$ so large that 
\begin{eqnarray*}
\sup_{(\sigma,t)\in\partial\widehat{(\gamma_1)_k}}\mathcal J_{\lambda,\mu}\left(\pi(\sigma,t)\right)&<&-2L,\\
\overline\Pi_{\Lambda,\mu}:=\inf_{\pi\in\Pi_\Lambda}\sup_{(\sigma,t)\in\widehat{(\gamma_1)_k}}\mathcal J_{\lambda,\mu}\left(\pi(\sigma,t)\right)&>&-\frac L2,\\
\overline\Pi_{\Lambda,\mu}&\le&\overline L,
\end{eqnarray*}
for some $\overline L>0$.\\

Now, applying the Monotonicity trick of Struwe, we can argue as in \cite{ma} to obtain the following assertions. 
\begin{proposition}
Under the assumptions of Theorem \ref{lambda} there exist $\varepsilon_0>0$ such that, for a dense set of values $\mu\in[1-\varepsilon_0,1+\varepsilon_0]$, \eqref{mulambda} has a solution $u_\mu$ whose energy and Morse index satisfy
$$\mathcal J_{\lambda,\mu}(u_\mu)=\overline\Pi_{\Lambda,\mu},\qquad\mathrm{ind}(u_\mu)\le 2k.$$ 
\end{proposition}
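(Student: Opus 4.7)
The plan is to combine Struwe's monotonicity trick applied to the family $\{\mathcal J_{\lambda,\mu}\}$, the finite-dimensional min-max structure $\overline\Pi_{\Lambda,\mu}$ built in Proposition \ref{infsup}, and the Fang--Ghoussoub selection principle \cite{fg} for Morse indices in min-max problems. The starting observation is that the $\mu$-dependence of $\mathcal J_{\lambda,\mu}$ enters only through the non-negative term $\frac{\mu}{2}\int_\Sigma|\nabla u|^2$, so $\mu \mapsto \mathcal J_{\lambda,\mu}(u)$ is pointwise non-decreasing in $u$, and hence so is $\mu \mapsto \overline\Pi_{\Lambda,\mu}$. Being monotone and bounded (between $-\frac L2$ and $\overline L$) on $[1-\varepsilon_0, 1+\varepsilon_0]$, this function is differentiable on a dense subset $\mathcal D$ of full Lebesgue measure.

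At each $\mu \in \mathcal D$ I would run Struwe's standard argument: starting from a minimizing sequence of admissible maps $\pi_n \in \Pi_\Lambda$, differentiability of $\overline\Pi_{\Lambda,\cdot}$ at $\mu$ forces $\int_\Sigma |\nabla u|^2$ to be uniformly controlled along any near-optimal path, which yields a Palais--Smale sequence $(u_n)$ for $\mathcal J_{\lambda,\mu}$ at the level $\overline\Pi_{\Lambda,\mu}$ with $\int_\Sigma|\nabla u_n|^2 \le C(\mu)$. Proposition \ref{tm} then bounds the nonlinear quantities $\int_\Sigma \widetilde K e^{u_n}$ and $\int_{\partial\Sigma}\widetilde h e^{u_n/2}$; combining this with the coercivity gained from $\mu>0$ in the leading Dirichlet term of \eqref{mulambda} and standard elliptic regularity for the associated Neumann problem, one extracts a strongly convergent subsequence $u_n \to u_\mu$ in $H^1(\Sigma)$, yielding a solution to \eqref{mulambda} with $\mathcal J_{\lambda,\mu}(u_\mu) = \overline\Pi_{\Lambda,\mu}$.

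For the Morse-index estimate I would invoke Fang--Ghoussoub: min-max critical values attained on a $d$-dimensional parameter space admit critical points of Morse index at most $d$. Since $\gamma_1 \simeq \mathbb S^1$ is one-dimensional, Lemma \ref{bary} gives $\dim(\gamma_1)_k = 2k-1$, and coning raises this to $\dim \widehat{(\gamma_1)_k} = 2k$, so one may select $u_\mu$ with $\mathrm{ind}(u_\mu) \le 2k$. The underlying deformation argument is: if $\mathrm{ind}(u_\mu) > 2k$ were forced, one could deform any nearly-optimal $\pi \in \Pi_\Lambda$ along $2k+1$ linearly independent negative eigendirections of $\mathcal J_{\lambda,\mu}''(u_\mu)$ and strictly lower its sup below $\overline\Pi_{\Lambda,\mu}$, contradicting the definition of the min-max value.

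The main obstacle I expect is the verification of Palais--Smale compactness at the level $\overline\Pi_{\Lambda,\mu}$. For mean-field-type problems PS typically fails, and even with the uniform Dirichlet bound supplied by Struwe's trick one must rule out concentration of $\widetilde K e^{u_n}$ at isolated points; the standard way out is to show that any such concentration would be inconsistent either with the bound $\overline\Pi_{\Lambda,\mu} \le \overline L$ or with the finite Morse-index estimate (through a blow-up analysis analogous to Theorem \ref{blowupanal}, rewriting \eqref{mulambda} in the form required there after dividing by $\mu$). Delicate here is the fact that $\mu$ is merely generic, not explicitly bounded away from $1$, so the compactness threshold must be controlled uniformly in $\mu \in \mathcal D$; this is where the Fang--Ghoussoub Morse-index bound, valid at each $\mu$, provides the uniform input needed to close the argument, leaving the subsequent passage $\mu_n \to 1$ to be carried out separately via Corollary \ref{compa} under the hypothesis $\lambda \notin \Gamma$.
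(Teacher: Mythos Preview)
Your approach matches the paper's: Struwe's monotonicity trick on $\mu \mapsto \mathcal J_{\lambda,\mu}$ gives bounded Palais--Smale sequences, hence solutions, for a dense set of $\mu$, and a Fang--Ghoussoub-type result (the paper actually cites Theorem~2 of \cite{bcjs}) bounds the Morse index by $\dim\widehat{(\gamma_1)_k}=2k$. Your final concern is overstated: once the Dirichlet energy is bounded, the singular Moser--Trudinger inequality (Proposition~\ref{tm}) already gives equi-integrability of $\widetilde K e^{u_n}$ and $\widetilde h e^{u_n/2}$, so standard arguments yield strong $H^1$ convergence of the PS sequence without any blow-up analysis---that machinery is needed only in the subsequent passage $\mu_n\to1$.
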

\begin{proof}
The existence of a solution for a dense set of parameters at the min-max energy level follow by arguing as in \cite{s}. The upper bound on the Morse index is a consequence of Theorem 2 of \cite{bcjs}, since the min-max scheme is based on $\widehat{(\gamma_1)_k}$, which has dimension $2k$ by Lemma \ref{bary}.
\end{proof}
We can now prove Theorem \ref{lambda} and, as its particular case, Theorem \ref{minmax}.
\begin{proof}[Proof of Theorem \ref{lambda}] We take a sequence $\mu_n\underset{n\to\infty}\to1$ such that the perturbed problem \eqref{mulambda} has a solution $u_n$. Since $\lambda\notin\Gamma$ and $\sup_n\mathrm{ind}(u_n)<\infty$, by Corollary \ref{compa} we get that $u_n$ is uniformly bounded in $L^\infty(\Sigma)$. By standard estimates, $u_n$ converges in $\mathcal C^{2,\gamma}\left(\overline\Sigma\right)$ to some $u$ which solves \eqref{eqlambda}.\\
In the case $\lambda=4\pi\chi$ we get a solution to \eqref{eqsing}.
\end{proof}\

\section{The case of non-positive singular Euler characteristic}

This section is devoted to the proof of Theorems \ref{chi=0} and \ref{chi<0}, that is the case $\chi\le0$.\\
Such results have counterparts in the regular case from the paper \cite{bls2} and the proofs are rather similar. In contrast with the case $\chi>0$, here in the definition of the energy functional the logarithmic term does not play a significant role (see \eqref{j} and \eqref{fab}), but it is crucial to control the rational term. A fundamental role is played by the function $\mathfrak D$, defined as in by \eqref{d}, which interestingly is unaffected by the presence of conical singularities or corners.\\
In order to control the nonlinear terms by means of the quadratic term, we need the following result, see \cite[Proposition 4.1]{bls2}.
\begin{proposition}\label{trace}
Assume $\widetilde K(x)\lneqq0$ for any $x\in\Sigma$ and let $\mathfrak D$ be defined by \eqref{d}. Then for any $\varepsilon>0$ there exists $C=C_\varepsilon$ such that for any $u\in H^1(\Sigma)$ one has:
$$\frac{\left(\int_{\partial\Sigma}\widetilde he^{u/2}\right)^2}{\int_\Sigma|\widetilde K|e^u}\le\frac{(\mathfrak D_M+\varepsilon)^2}4\int_\Sigma|\nabla u|^2+C\,\quad\mbox{where }\mathfrak D_M:=\max_{\partial\Sigma}|\mathfrak D|.$$
\end{proposition}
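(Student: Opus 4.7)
The plan is to prove Proposition \ref{trace} by adapting the argument of \cite[Proposition 4.1]{bls2} to the singular setting, exploiting the scale-invariance of $\mathfrak D=\widetilde h/\sqrt{|\widetilde K|}$ observed in \eqref{d} and the line below \eqref{kh}. The key point is that, even in the singular case, $\mathfrak D$ remains a bounded continuous function on $\partial\Sigma$: both $\widetilde h$ and $\sqrt{|\widetilde K|}$ behave like $d(\cdot,q_j)^{\beta_j}$ times a smooth positive factor near each corner, so the ratio is well defined and bounded up to and including the corners.

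First I would construct a smooth vector field $X$ on $\overline\Sigma$ that interpolates $\widetilde h$ from the boundary into the interior. Let $\widetilde H$ be a suitable extension of $\widetilde h$ in a thin tubular neighborhood of $\partial\Sigma$ (e.g.\ along normal rays) chosen so that $\widetilde H^{2}/|\widetilde K|$ extends continuously up to $\partial\Sigma$; this is possible precisely because $\mathfrak D^{2}$ is continuous on $\partial\Sigma$. Let $\hat\nu$ be a smooth extension of $\nu$ with $|\hat\nu|\le 1$, and let $\zeta$ be a cutoff supported in a small tubular neighborhood of $\partial\Sigma$ that avoids the interior singularities $\{p_{1},\dots,p_{n}\}$. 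Setting $X:=\zeta\,\widetilde H\,\hat\nu$ gives $X\cdot\nu=\widetilde h$ on $\partial\Sigma$, and by shrinking the neighborhood one arranges the pointwise bound $|X|^{2}\le(\mathfrak D_{M}+\varepsilon)^{2}|\widetilde K|$ on $\Sigma$.

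Next I would apply the divergence theorem:
$$\int_{\partial\Sigma}\widetilde h e^{u/2}=\int_{\Sigma}\mathrm{div}\bigl(e^{u/2}X\bigr)=\frac{1}{2}\int_{\Sigma}e^{u/2}\,X\cdot\nabla u+\int_{\Sigma}e^{u/2}\,\mathrm{div}(X),$$
and estimate the leading term by Cauchy–Schwarz combined with the pointwise control of $|X|^{2}/|\widetilde K|$:
$$\left|\frac{1}{2}\int_{\Sigma}e^{u/2}\,X\cdot\nabla u\right|\le\frac{\mathfrak D_{M}+\varepsilon}{2}\left(\int_{\Sigma}|\widetilde K|e^{u}\right)^{1/2}\left(\int_{\Sigma}|\nabla u|^{2}\right)^{1/2}.$$
The remainder is handled by rewriting $e^{u/2}\mathrm{div}(X)=\sqrt{|\widetilde K|}e^{u/2}\cdot\mathrm{div}(X)/\sqrt{|\widetilde K|}$ and applying Cauchy–Schwarz, which gives a bound of the form $C_{\varepsilon}\bigl(\int_{\Sigma}|\widetilde K|e^{u}\bigr)^{1/2}$ once one verifies that $\mathrm{div}(X)/\sqrt{|\widetilde K|}\in L^{2}(\Sigma)$. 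Squaring, using $(a+b)^{2}\le(1+\eta)a^{2}+(1+\eta^{-1})b^{2}$ with $\eta=\eta(\varepsilon)$ small, and dividing through by $\int_{\Sigma}|\widetilde K|e^{u}$ yields the stated inequality.

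The main obstacle I anticipate is the simultaneous realization of $X\cdot\nu=\widetilde h$, the pointwise bound $|X|^{2}\le(\mathfrak D_{M}+\varepsilon)^{2}|\widetilde K|$, and the $L^{2}$ integrability of $\mathrm{div}(X)/\sqrt{|\widetilde K|}$ near the corners $q_{j}$. This balance is delicate because $\widetilde H$ must vanish at $q_{j}$ with exactly the same order $d^{\beta_{j}}$ as $\sqrt{|\widetilde K|}$, and its gradient along the normal direction produces a potentially singular contribution $\sim\beta_{j}d^{\beta_{j}-1}$ to $\mathrm{div}(X)$; matching the vanishing rates of $\widetilde H^{2}$ and $|\widetilde K|$ at each corner, guaranteed by the continuity of $\mathfrak D^{2}$ on $\partial\Sigma$, is precisely what lets the ratio $\mathrm{div}(X)/\sqrt{|\widetilde K|}$ stay controlled in the relevant integral norm.
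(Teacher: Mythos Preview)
Your overall strategy is exactly what the paper invokes: it states the proposition and simply cites \cite[Proposition~4.1]{bls2}, relying on the scale-invariance of $\mathfrak D=\widetilde h/\sqrt{|\widetilde K|}=h/\sqrt{|K|}$ to claim that the regular argument carries over. So at the level of ``which approach,'' you and the paper agree.

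Where your write-up has a genuine gap is precisely the obstacle you flag at the end, and your proposed resolution does not close it. Near a corner $q_j$ (in conformal coordinates with boundary $\{t=0\}$ and $q_j=0$) the two requirements on $X$ pull in opposite directions. If you extend $\widetilde h$ constantly along normal rays, then $\partial_\nu\widetilde H=0$ and $\mathrm{div}(X)$ is harmless, but $|X|^2/|\widetilde K|\sim (s^2/(s^2+t^2))^{\beta_j}$, which for $\beta_j<0$ is unbounded in any neighbourhood of $q_j$; so the pointwise bound $|X|^2\le(\mathfrak D_M+\varepsilon)^2|\widetilde K|$ fails and your Cauchy--Schwarz step for the main term breaks. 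If instead you take $\widetilde H=\mathfrak D_{\mathrm{ext}}\sqrt{|\widetilde K|}$ (or equivalently $X=e^{-W/2}Y$ with smooth $Y$), the pointwise bound is automatic, but the normal derivative of $\sqrt{|\widetilde K|}$ produces a term of size $\beta_j\,t\,d^{\beta_j-2}$ in $\mathrm{div}(X)$, so $\mathrm{div}(X)/\sqrt{|\widetilde K|}\sim t/(s^2+t^2)$, whose $L^2$ norm over a half-ball diverges logarithmically. Thus ``matching vanishing rates'' alone does not yield $\mathrm{div}(X)/\sqrt{|\widetilde K|}\in L^2$, contrary to your last sentence.

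For $\beta_j\ge 0$ the normal-ray extension actually does the job (both conditions hold), so the issue is confined to corners with $-1<\beta_j<0$. There you need an additional idea---e.g.\ a hybrid extension, an excision/limiting argument near the corners, or routing the remainder through a different duality than $L^2\times L^2$---before the inequality is established. The paper does not supply this either; it leans entirely on the citation.
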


\begin{proof}[Proof of Theorem \ref{chi=0}]
We recall that we study the energy functional \eqref{j} in the space
$$H_0=\left\{u\in\overline H^1(\Sigma):
\int_\Sigma\widetilde Ke^u\int_{\partial\Sigma}\widetilde he^{u/2}<0\right\}.$$
We study separately each of the three cases:
\begin{enumerate}
\item Since $\widetilde K>0$ somewhere and $\int_{\partial\Sigma}\widetilde he^{u/2}<0$ for all $u$, then
$$H_0=\left\{u\in\overline H^1(\Sigma):
\int_\Sigma\widetilde Ke^u>0\right\}\ne\emptyset.$$
Therefore,
$$\mathcal J(u)=\frac12\int_\Sigma|\nabla u|^2+2\frac{\left(\int_{\partial\Sigma}\widetilde he^{u/2}\right)^2}{\int_\Sigma\widetilde Ke^u}\ge\frac12\int_\Sigma|\nabla u|^2\underset{\|u\|\to\infty}\to\infty.$$
To conclude with the coercivity of $\mathcal J$ in $H_0$ we need to verify that the energy also diverges on the boundary
$$\partial H_0=\left\{u\in H_0:
\int_\Sigma\widetilde Ke^u=0\right\}.$$
If $u_n\underset{n\to\infty}\rightharpoonup u_0\in\partial H_0$, then by weak continuity, one has
$$\int_\Sigma\widetilde Ke^{u_n}\underset{n\to\infty}\to\int_\Sigma\widetilde Ke^{u_0}=0\qquad\mbox{and}\qquad\int_{\partial\Sigma}\widetilde h_ne^{u_n/2}\underset{n\to\infty}\to\int_{\partial\Sigma}\widetilde he^{u/2}<0,$$
therefore $$\mathcal J(u_n)\ge\frac{\left(\int_{\partial\Sigma}\widetilde he^{u_n/2}\right)^2}{\int_\Sigma\widetilde Ke^{u_n}}\underset{n\to\infty}\to\infty,$$
which shows coercivity, hence existence of minimizing solutions.
\item Since $\widetilde K\ge0$, by the same argument as before we show coercivity on the whole $\overline H^1(\Sigma)$ and the existence of minimizers solving
\begin{equation}\label{noc}
\left\{\begin{array}{ll}-\Delta u=2C^2(u)\widetilde Ke^u&\mbox{in }\Sigma\\\partial_\nu u=2C(u)\widetilde he^{u/2}&\mbox{on }\partial\Sigma\end{array}\right.,
\end{equation}
without any information on the sign of $C(u)$ (since we are on $\overline H^1(\Sigma)$). To get a solution to \eqref{eqmeanfield}, hence to \eqref{eqsing}, we need to show that $C(u)>0$.\\
If $C(u)=0$, then the only solution to \eqref{noc} must be $u\equiv0$, hence we get the following contradiction
$$0=C(u)=-\frac{\int_{\partial\Sigma}\widetilde he^{u/2}}{\int_\Sigma\widetilde Ke^u}=-\frac{\int_{\partial\Sigma}\widetilde h}{\int_\Sigma\widetilde K}>0.$$
Finally, if $C(u)<0$ we get a solution to \eqref{eqmeanfield} with $-\widetilde h$ instead of $\widetilde h$, since \eqref{noc} is invariant by change of sign to $\widetilde h$; however, problem
$$\left\{\begin{array}{ll}-\Delta u=2\widetilde Ke^u&\mbox{in }\Sigma,\\
\partial_\nu u=-2\widetilde he^{u/2}&\mbox{on }\partial\Sigma
\end{array}\right.$$
has no solutions because multiplying by $e^{-u/2}$ and integrating by parts one gets again a contradiction:
$$-\int_{\partial\Sigma}\widetilde h<\int_\Sigma\widetilde Ke^u-\int_{\partial\Sigma}\widetilde he^{u/2}=-\frac12\int|\nabla u|^2e^{-u/2}<0.$$
Therefore, it must be $C(u)>0$, hence one has a true solution to \eqref{eqmeanfield}.
\item Since $\int_\Sigma\widetilde Ke^u<0<\int_{\partial\Sigma}\widetilde he^{u/2}$, therefore $H_0=\overline H^1(\Sigma)$, hence, in order to get minimizing solutions, we suffice to show coercivity at infinity. To this purpose, we take small $\varepsilon$ such that $\mathfrak D(y)+\varepsilon<1$ for any $y\in\partial\Sigma$ and apply Proposition \ref{trace}:
$$\mathcal J(u)=\frac12\int_\Sigma|\nabla u|^2-2\frac{\left(\int_{\partial\Sigma}\widetilde he^{u/2}\right)^2}{\int_\Sigma|\widetilde K|e^u}\ge\frac{1-(\mathfrak D_M+\varepsilon)^2}2\int_\Sigma|\nabla u|^2-C_\varepsilon\underset{\|u\|\to\infty}\to\infty.$$
\end{enumerate}
\end{proof}

In the case $\chi<0$ we have an extra logarithmic term in the energy, which was not present when $\chi=0$, but it does not really affect the problem. This follows from the following elementary result.
\begin{lemma}\label{f}
For any $\varepsilon>0$ there exists $C_\varepsilon>0$ such that the function
\begin{equation}\label{ft}
f(t):=F_\chi(-1,t)=8\pi|\chi|\left(-\log{\left(\sqrt{t^2+8\pi|\chi|}-t\right)}+\frac{t}{\sqrt{t^2+8\pi|\chi|}-t}\right)
\end{equation}
verifies
$$f(t)\le(2+\varepsilon)t_+^2+C_\varepsilon.$$
\end{lemma}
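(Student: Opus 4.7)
The plan is to reduce the inequality to a routine asymptotic analysis, based on the observation that the function
$$g_\varepsilon(t):=f(t)-(2+\varepsilon)t_+^2$$
is continuous on $\mathbb R$, and that it suffices to show $g_\varepsilon(t)\to-\infty$ as $t\to\pm\infty$, for then $g_\varepsilon$ attains a finite supremum and $C_\varepsilon:=\sup_{\mathbb R}g_\varepsilon$ does the job. Setting $\mu:=8\pi|\chi|$, I would first recast $f$ in a cleaner form: since $\sqrt{t^2+\mu}-t>0$ for all $t\in\mathbb R$, rationalizing gives $\sqrt{t^2+\mu}-t=\mu/(\sqrt{t^2+\mu}+t)$, from which one obtains
$$f(t)=\mu\log\!\left(\frac{\sqrt{t^2+\mu}+t}{\mu}\right)+t\sqrt{t^2+\mu}+t^2,$$
a form which exposes $f$ as continuous on $\mathbb R$ and which makes the leading quadratic behavior at $+\infty$ transparent.

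Next I would extract the asymptotics of $f$ at $\pm\infty$. For $t\to+\infty$ the expansion $\sqrt{t^2+\mu}=t+\mu/(2t)+O(t^{-3})$ yields
$$t\sqrt{t^2+\mu}+t^2=2t^2+\tfrac{\mu}{2}+O(t^{-2}),\qquad\log\!\left(\tfrac{\sqrt{t^2+\mu}+t}{\mu}\right)=\log(2t/\mu)+O(t^{-2}),$$
and therefore $f(t)=2t^2+\mu\log t+O(1)$, so that
$$g_\varepsilon(t)=-\varepsilon t^2+\mu\log t+O(1)\xrightarrow[t\to+\infty]{}-\infty.$$
For $t\to-\infty$, writing $s:=-t>0$ I would use $\sqrt{t^2+\mu}+t=\sqrt{s^2+\mu}-s=\mu/(\sqrt{s^2+\mu}+s)=O(1/s)$ and $t\sqrt{t^2+\mu}+t^2=-s(\sqrt{s^2+\mu}-s)=O(1)$, which give $f(t)=-\mu\log s+O(1)\to-\infty$; since $t_+^2=0$ for $t\le0$, also $g_\varepsilon(t)\to-\infty$ here.

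Combining the two limits with the continuity of $g_\varepsilon$ on $\mathbb R$, the function is bounded above, and taking $C_\varepsilon:=\sup_{\mathbb R}g_\varepsilon<\infty$ yields the claim. There is no substantive obstacle: the computation is elementary calculus, and the only point requiring attention is the careful bookkeeping of the error terms in the two asymptotic expansions, especially at $-\infty$, where the cancellation $\sqrt{t^2+\mu}+t=O(1/|t|)$ has to be extracted cleanly to see that the logarithmic term dominates.
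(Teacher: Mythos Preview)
Your argument is correct. The paper itself does not supply a proof of this lemma, calling it an ``elementary result'' and using it without justification; your approach---rewriting $f$ via the rationalization $\sqrt{t^2+\mu}-t=\mu/(\sqrt{t^2+\mu}+t)$ to expose the leading $2t^2$ term, then checking that $f(t)-(2+\varepsilon)t_+^2\to-\infty$ at both ends---is exactly the kind of direct asymptotic computation one expects here, and the bookkeeping is accurate.
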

In view of Lemma \ref{f}, the case $\chi<0$ can still be treated using Proposition \ref{trace}, similarly as Theorem \ref{chi=0}.
\begin{proof}[Proof of Theorem \ref{chi<0}]
We write the energy functional as 
$$\mathcal J(u)=\frac12\int_\Sigma|\nabla u|^2+4\pi|\chi|\log{\left(-\int_\Sigma\widetilde Ke^u\right)}+4\pi|\chi|\log{\left(8\pi\right)}-f\left(\frac{\int_{\partial\Sigma}\widetilde he^{u/2}}{\sqrt{-\int_\Sigma\widetilde Ke^u}}\right),$$
where $f$ is as in \eqref{ft}.\\
The first nonlinear term is uniformly bounded from below on $H_\chi$ because of the Jensen's inequality
$$\int_\Sigma-\widetilde Ke^u=\int_\Sigma e^{u+\log{|\widetilde K|}}\ge|\Sigma|\int_\Sigma e^{\frac1{|\Sigma|}\int_\Sigma(u+\log{|\widetilde K|})}=|\Sigma|e^{\frac1{|\Sigma|}\int_\Sigma\log{|\widetilde K|}}>-C.$$
Since $\mathfrak D^+_M:=\max_{\partial\Sigma}{\frac h{\sqrt{|K|}}}<1,$ using Lemma \ref{f} and Proposition \ref{trace} with $h_+$ instead of $h$ we get, for $\varepsilon$ small enough:
\begin{eqnarray*}
\mathcal J(u)&\ge&\frac12\int_\Sigma|\nabla u|^2+f\left(\frac{\int_{\partial\Sigma}\widetilde he^{u/2}}{\sqrt{-\int_\Sigma\widetilde Ke^u}}\right)-C\\
&\ge&\frac12\int_\Sigma|\nabla u|^2-(2+\varepsilon)\frac{\left(\int_{\partial\Sigma}\widetilde he^{u/2}\right)_+^2}{-\int_\Sigma\widetilde Ke^u}-C\\
&\ge&\frac{2-(2+\varepsilon)(\mathfrak D_M^++\varepsilon)^2}4\int_\Sigma|\nabla u|^2-C\underset{\|u\|\to\infty}\to\infty,
\end{eqnarray*}
hence $\mathcal J$ has minimizers which solve \eqref{eqmeanfield}.
\end{proof}\

\section*{Acknowledgments}

L. B. would like to express his gratitude to the University of Granada
for the kind hospitality received during his visit in September 2024.\\
F. J. R.-S. would like to express his gratitude to the Mathematics and Physics Department of Roma
Tre University for the kind hospitality received during his stay from April to June 2024.\\
Both authors wish to thank Professors David Ruiz and Rafael Lopez-Soriano for various discussions and suggestions that have been of great help in the preparation of this work.\\
Authors are also thankful to the anonymous referee for the careful reading and for many useful suggestions.\

\bibliography{references.bib} % see references.bib for bibliography management
\bibliographystyle{plain}
\end{document}